\newcommand{\ML}{\mathbb{M}_{\mathrm L}}
  \renewcommand{\labelenumi}{(\roman{enumi})}
\renewenvironment{proof}[1][\proofname]{\par
  \pushQED{\qed}%
  \normalfont \topsep0\p@\relax
  \trivlist
  \item[\hskip\labelsep\scshape
  #1\@addpunct{.}]\ignorespaces
}{%
  \popQED\endtrivlist\@endpefalse
}
\newtheorem{base}{Base}[section]
\numberwithin{equation}{section}
\newtheorem{theorem}[base]{Theorem} \newtheorem*{theorem*}{Theorem}
\newtheorem{lemma}[base]{Lemma} \newtheorem*{lemma*}{Lemma}
 \newtheorem*{prop*}{Proposition}
\newtheorem{cor}[base]{Corollary} \newtheorem*{cor*}{Corollary}
 \newtheorem*{algo*}{Algorithm}
\theoremstyle{definition}
\newtheorem{remark}[base]{Remark} \newtheorem*{remark*}{Remark}
 \newtheorem*{definition*}{Definition}
 \newtheorem*{example*}{Example}
 \newtheorem*{cond*}{Condition}
\numberwithin{algorithm}{section}
\begin{document}
\setkomafont{title}{\normalfont\Large}
\title{Flux-corrected transport stabilization of an evolutionary cross-diffusion
cancer invasion model}

\author[1]{Shahin Heydari \footnote{e-mail:\ heydari@karlin.mff.cuni.cz , corresponding author}}
\author[1]{Petr Knobloch \footnote{e-mail:\ knobloch@karlin.mff.cuni.cz}}
\author[2]{Thomas Wick \footnote{e-mail:\ thomas.wick@ifam.uni-hannover.de}}

\affil[1]{Charles University, 
          Faculty of Mathematics and Physics,
          Sokolovsk\'a 83, 18675 Praha 8, Czech Republic}
          
\affil[2]{Leibniz University Hannover,
	  Institute of Applied Mathematics,
	  Welfengarten 1, 30167 Hannover, Germany}

\date{}

\maketitle

\KOMAoptions{abstract=true}
\begin{abstract}
In the present work, we investigate a model of the invasion of healthy tissue
by cancer cells which is described by a system of nonlinear PDEs consisting of
a cross-diffusion-reaction equation and two additional nonlinear ordinary
differential equations. We show that when the convective part of the
system, the chemotactic term, is dominant, then straightforward numerical
methods for the studied system may be unstable. We present an implicit finite
element method using conforming $P_1$ or $Q_1$ finite elements to discretize
the model in space and the $\theta$-method for discretization in time. The
discrete problem is stabilized using a nonlinear flux-corrected transport
approach. It is proved that both the nonlinear scheme and the linearized
problems used in fixed-point iterations are solvable and positivity preserving.
Several numerical experiments are presented in 2D using the deal.II library to
demonstrate the performance of the proposed method.

\noindent
  \\[0.5pt]
 \textbf{Key words:} {cancer invasion, cross-diffusion equation, FEM-FCT 
stabilization, positivity preservation, existence of solutions } \\
 \textbf{AMS Classification (2020):} {
 65M22, 65M60, 
92C17, 
35Q92, 
 }
\end{abstract}

\section{Introduction}
\label{sec:introduction}

Keller and Segel \cite{keller1971model, keller1970initiation} proposed the
first mathematical model for description of chemotactical processes. Chemotaxis
refers to the motion in the direction to (or away from) the position of higher
concentration based on the gradient of chemical substances and its
chemotacitivity character which controls the speed of this motion. Their model
has been widely extended and followed to develop more sophisticated and complex
chemotaxis models and played a vitally important role in many areas of science,
in particular in medical and biological applications, for example, bacteria and
cell aggregation \cite{aida2006lower, mimura1996aggregating, tyson1999minimal},
tumor angiogenisis and invasion \cite{ anderson2000mathematical,
chaplain2005mathematical, chaplain2006mathematical, chaplain1993model},
biological pattern formation \cite{aida2004target, tyson1999minimal}, and
immune cell migration \cite{wu2005signaling}. From the analytical point of
view, mathematical analysis for chemotaxis systems of equations is a challenge
and causes many questions especially in the context of the existence and
uniqueness of solutions. In the last three decades, many researchers have been
actively involved and answered some of these questions
\cite{nanjundiah1973chemotaxis, corrias2004global, horstmann2005boundedness, 
tao2009combined, horstmann2011uniqueness, calvez2012blow}. From the numerical
point of view, so far a great deal of research on chemotaxis models has been
done in various areas, including the finite difference method
\cite{chaplain2005mathematical,chaplain2006mathematical,kolev2022unconditional},
discontinuous Galerkin method \cite{epshteyn2009new,li2017local}, finite
element method \cite{saito2007conservative,zhang2016characteristic,
zhao2020petrov}, finite volume method \cite{filbet2006finite},
operator-splitting methods \cite{ropp2009stability}, or fractional step 
algorithms \cite{tyson2000fractional}. However, many analytical and numerical 
aspects are still untouched and call for further investigation.

The chemotaxis problems are usually strongly coupled nonlinear systems of
equations whose solutions represent concentrations or densities and need to be
non-negative in order to satisfy the physics behind the system. Hence, it is
difficult to construct an efficient and accurate numerical method that does not
produce solutions with negative values. Another interesting aspect is singular,
spiky and oscillatory behavior of the solutions. In particular, when the
chemotaxis term dominates the diffusion and reaction terms, in other words
large chemosensitivity is present which corresponds to large Reynolds numbers,
it may give rise to nonphysical oscillations in the solution. To overcome this
problem, stabilization methods can be applied. Up to now, many scientists used
flux-corrected transport (FCT) algorithms, i.e., nonlinear high-resolution
schemes introduced by Boris and Book \cite{BB73, book1975flux, boris1976flux},
later developed based on linear finite element discretizations by Kuzmin,
L\"ohner et al.~\cite{lohner1987finite, Kuz09, Kuz12a, KT02}, and further 
extended to linear and nonlinear space-time FEM-FCT in \cite{FeNeuNaWi19}. In
\cite{strehl2010flux}, an implicit flux-corrected transport scheme was
developed and applied to three benchmark examples of the general Keller--Segal
model in two spatial dimensions. It was shown that the proposed method is
positivity preserving and sufficiently accurate, even in the cases where
solutions blow up in the center or at the boundary of the domain. The
investigations of the blow-up behavior of the solutions were further extended
to three spatial dimensions in \cite{strehl2013positivity}. In
\cite{sokolov2013numerical, sokolov2015afc}, an FEM-FCT scheme was coupled with
a level-set method to obtain positivity preserving solutions on a stationary
surface and evolving-in-time surfaces. It was shown that the proposed method is
able to produce accurate numerical solutions, which makes it possible to couple
the partial differential equations defined on a specific domain with the PDEs
that are defined on the surface of this domain.
This scheme was further used with operator-splitting techniques to solve
chemotaxis models in 3D. The operator-splitting method splitted a 3D problem
into a sequence of 1D subproblems and the FEM-FCT algorithm was used to solve
each 1D subproblem separately \cite{huang2020efficient}. In
\cite{sulman2019positivity}, the authors used an efficient adaptive moving mesh
finite element approach based on the parabolic Monge--Amp\`ere method for
determining the coordinate transformation for the adaptive mesh combined with
an FCT scheme which guarantees the non-negativity of the solutions. As a
result, the computational cost was significantly reduced. All aforementioned
techniques were also applied to the same benchmark examples. A different case
was studied in \cite{huang2021fully}, where the authors used the
pressure-correction scheme and flux-corrected transport algorithm to propose an
efficient linear positivity-preserving method for the solution of
chemotaxis--Stokes equations. 

In this work, we focus on a cancer-invasion model developed in
\cite{perumpanani1999two}, modeling the motion of cancer cells, degradation of
extracellular matrix, and certain enzymes (e.g., protease). The extracellular
matrix is degraded upon contact with protease which is produced where cancer
cells and extracellular matrix meet and decay over the time. In
\cite{fuest2022global}, we extended the proposed model by a diffusion term,
gave a rigorous proof for the existence of the global classical solution and 
presented numerical results for a Galerkin finite element discretization.
In the present paper, a diffusion term is not considered, which makes the
problem more challenging. In \cite{khalsaraei2016positivity}, one of the
authors of the present paper applied a positivity preserving non-standard
finite difference method to solve the nonlinear system in 1D, see also
\cite{chapwanya2014positivity} for related approaches. Here, we consider the
finite element method and apply the FCT technique to guarantee the positivity
preservation. First, however, we consider the more diffusive nonlinear 
low-order method. An additional nonlinearity is then introduced by the flux
correction. We prove that both nonlinear problems are solvable and positivity
preserving. To the best of our knowledge, the current work is a first attempt 
to gain an insight into the applicability of the FCT technique to the numerical
solution of a chemotaxis system without self-diffusion and to provide a rigorous
analysis of the solvability and positivity preservation. Note that the 
existence and uniqueness for the FEM-FCT method applied to linear evolutionary 
convection-diffusion equations has been addressed only recently in 
\cite{john2021solvability, john2021existence}. We also present a fixed-point
algorithm for the iterative solution of the FCT discretization and prove that
it is well posed and provides a non-negative solution at each step.
Consequently, the non-negativity of the approximate solution is guaranteed
independently of the choice of a stopping criterion. The properties of the
proposed FCT scheme are illustrated by various numerical simulations carried
out using our newly designed algorithm in the deal.II library
\cite{deal2020,dealII94}.

The outline of this paper is as follows. In Section~\ref{sec:model}, we 
formulate the mathematical model which is discretized by the Galerkin method in
Section~\ref{sec:galerkin}. Then, the FCT stabilization is introduced in
Section~\ref{sec:fct}, where also the solvability and positivity preservation
is proved. The fixed-point algorithm is proposed and investigated in
Section~\ref{sec:iter_sol}. In Section~\ref{sec:numerics}, we report several 
numerical simulations in two spatial dimensions carried out for various regimes.
Finally, our results are summarized in Section~\ref{sec:conclusions}.

\section{Mathematical model}
\label{sec:model}

In this section, we discuss the following nondimensionalized continuous model
of a malignant cancer invasion proposed by Perumpanani et al. in
\cite{perumpanani1999two, marchant_norbury_perumpanani}. The model contains
three unknown variables, namely the cancer cell density $u=u(x,t)$, connective
tissue $c=c(x,t)$, and protease $p=p(x,t)$, and it consists of the equations
\begin{alignat}{2}
   & \dfrac{\partial u}{\partial t} = \mu\,u(1 - u) 
   - \chi\,\nabla \cdot (u \nabla c)  \qquad
   && \text{in $\Omega \times (0, T]$}\,, \label{eq1} \\
   & \dfrac{\partial c}{\partial t} = -pc  
   && \text{in $\Omega \times (0, T]$}\,, \label{eq2}\\
   & \dfrac{\partial p}{\partial t} = \epsilon^{-1} (uc - p) 
   && \text{in $\Omega \times (0, T]$}\,, \label{eq3}
\end{alignat}
where $\Omega$ is a bounded polyhedral domain in $ℝ^d, d\in\{1,2,3\}$, $[0,T]$ 
is a time interval, and $\mu$, $\chi$, $\epsilon$ are positive constants. Here,
$\mu$ and $\chi$ denote the proliferation and
haptotaxis rate of cancer cells, respectively, and the parameter $\epsilon$ is
supposed to be small since the units of connective tissues and invasive cells
are much larger than the protease. In the process of invasion, the connective
tissue is affected by the invasive flux of $u\nabla c$ into its compartment.
Since the connective tissue does not contain any empty space large enough for
passing of passive cancer cells, it degrades by protease which is produced by
invasive cancer cells upon contact with connective tissue. It can be shown that
if the initial conditions of the above model are non-negative, then the 
computed solutions stay non-negative at all times, for more details see
\cite{marchant_norbury_perumpanani, marchant_norbury_sherratt,
chapwanya2014positivity} and the references therein.

The system (\ref{eq1})--(\ref{eq3}) is subjected to the homogeneous Neumann
boundary condition
\begin{equation}\label{eq4}
   u\,\frac{\partial c}{\partial n} = 0\qquad
   \text{on}\,\,\,\partial\Omega\times[0,T]\,,
\end{equation}
where $n$ is the unit outward normal vector on $\partial \Omega$. The above
equations are endowed with the initial conditions
\begin{equation}\label{eq5} 
   u(x,0) = u^0(x)\,, \quad c(x,0) = c^0(x)\,, \quad p(x,0) = p^0(x)\,,\qquad
   x\in\Omega\,,
\end{equation}
where $u^0,\,c^0,\,p^0:\Omega\to[0,1]$ are given functions.

In \cite{fuest2022global}, we considered a modified version of
(\ref{eq1})--(\ref{eq3}) containing an extra diffusion term in (\ref{eq1}).
Precisely, instead of the equation (\ref{eq1}), we considered
\begin{equation}\label{eq6}
  \dfrac{\partial u}{\partial t} = \mu\,u(1 - u) 
  - \chi\,\nabla \cdot (u \nabla c)  + \alpha^{-1}\Delta u
  \qquad\quad \text{in $\Omega \times (0, T]$}
\end{equation}
with a positive constant $\alpha$. This required to replace the boundary
condition (\ref{eq4}) by
\begin{equation*}
   \alpha^{-1}\,\frac{\partial u}{\partial n} 
   = \chi\,u\,\frac{\partial c}{\partial n}\qquad
   \text{on}\,\,\,\partial\Omega\times[0,T]\,.
\end{equation*}
Thus, the problem considered in this paper corresponds to the limit case
$\alpha\to\infty$ of the problem from \cite{fuest2022global}. In that paper, we
proved the existence of global classical solutions for two- and
three-dimensional bounded domains $\Omega$ with smooth boundaries and we proved
that these solutions are non-negative. Moreover, we showed that by fixing the
proliferation rate $\mu$ and varying the haptotaxis $\chi$ one can make either
the diffusion or the transport of the cancer cells dominant. The domination of
the convection term can produce spurious oscillations and a blow-up in the
solution of the system as it is the case to be considered in here.

\section{A Galerkin discretization}
\label{sec:galerkin}

The solution of the problem \eqref{eq1}--\eqref{eq5} satisfies
\begin{alignat}{2}
   &\left(\dfrac{\partial u}{\partial t},v\right) = \mu\,\big(u(1 - u),v\big) + 
     \chi\,\big(u \nabla c,\nabla v\big) \qquad
   && \text{in $(0, T]$ and for $v\in H^1(\Omega)$}\,, \label{eq1v} \\
   &c(x,t)=c^0(x)\,{\mathrm e}^{-\int_0^t\,p(x,s)\,{\mathrm d}s} 
   &&\forall\,\,(x,t)\in\Omega\times[0, T]\,,\label{eq2v}\\
   &p(x,t)={\mathrm e}^{-t/\epsilon}\left[p^0(x)
   +\frac1\epsilon\,\int_0^t\,u(x,s)\,c(x,s)\,
   {\mathrm e}^{s/\epsilon}\,{\mathrm d}s\right]\quad
   &&\forall\,\,(x,t)\in\Omega\times[0, T]\,,\label{eq3v}
\end{alignat}
where $(\cdot,\cdot)$ denotes the inner product in $L^2(\Omega)$ or
$L^2(\Omega)^d$. To define an approximate solution of \eqref{eq1}--\eqref{eq5},
we first introduce a triangulation ${\mathcal T}_h$ of $\Omega$ consisting of
simplicial (for $d=1,2,3$), quadrilateral (for $d=2$) or hexahedral (for $d=3$)
shape-regular cells possessing the usual compatibility properties (see, e.g.,
\cite{Ciarlet}). For any cell $K\in{\mathcal T}_h$, we denote by $h_K$ the
diameter of $K$ and assume that $h_K\le h$. We denote by $V_h\subset
H^1(\Omega)$ the usual conforming $P_1$ or $Q_1$ finite element space
constructed using the triangulation ${\mathcal T}_h$. Let $\phi_1,\dots,\phi_M$
be the standard basis functions of $V_h$ associated with the vertices
$x_1,\dots,x_M$ of ${\mathcal T}_h$. Thus, the basis functions are non-negative
and satisfy $\phi_i(x_j)=\delta_{ij}$ for $i,j=1,\dots,M$, where $\delta_{ij}$
is the Kronecker symbol. Any function $v_h\in V_h$ can be identified with a
coefficient vector $\mathbf{v}=(v_j)_{j=1}^M$ with respect to these basis
functions. Precisely, introducing the bijective operator $\pi_h:{\mathbb
R}^M\to V_h$ by
\begin{equation*}
   \pi_h\mathbf{v}=\sum_{j=1}^M\,v_j\,\phi_j\,,
\end{equation*}
one has $v_h=\pi_h\mathbf{v}$. The assumed shape regularity of ${\mathcal
T}_h$ implies that
\begin{equation}\label{eq:grad_est}
   \|\nabla\phi_i\|_{L^2(K)}^{}\le\kappa\,h_K^{d/2-1}\qquad
   \forall\,\,K\in {\mathcal T}_h\,,\,\,i=1,\dots,M\,,
\end{equation}
where $\kappa$ is a fixed constant independent of $i$, $K$, and $h$.  Next, the
time interval $[0,T]$ is decomposed by $0=t_0<t_1<\dots<t_N=T$ and we set
$\tau_n = t_n - t_{n-1}$, $n=1,\dots,N$. At each time level $t_n$, the solution
of \eqref{eq1}--\eqref{eq5} will be approximated by functions
$u_h^n,c_h^n,p_h^n\in V_h$. These functions can be identified with coefficient
vectors $\mathbf{u}^n=(u_j^n)_{j=1}^M$, $\mathbf{c}^n=(c_j^n)_{j=1}^M$,
$\mathbf{p}^n=(p_j^n)_{j=1}^M$, respectively, satisfying
$u_h^n=\pi_h\mathbf{u}^n$, $c_h^n=\pi_h\mathbf{c}^n$,
$p_h^n=\pi_h\mathbf{p}^n$.  Note that $u_h^n(x_i)=u_i^n$, $c_h^n(x_i)=c_i^n$,
and $p_h^n(x_i)=p_i^n$ for $i=1,\dots,M$. We set
\begin{equation}\label{eq:init_cond}
   u^0_i=u^0(x_i)\,,\qquad c^0_i=c^0(x_i)\,,\qquad p^0_i=p^0(x_i)\,,\qquad\quad 
   i=1,\dots,M\,.
\end{equation}
Using linear interpolation with respect to time between the time levels
gives functions $u_{h,\tau}$, $c_{h,\tau}$, $p_{h,\tau}$ defined on
$\overline\Omega\times[0,T]$. For example, $u_{h,\tau}$ satisfies
\begin{equation*}
   u_{h,\tau}(x,t)=\frac1{\tau_{n+1}}
   \left[u_h^{n+1}(x)(t-t_n)+u_h^n(x)(t_{n+1}-t)\right]\qquad\forall\,\,
   x\in\overline\Omega\,,\,\,t\in[t_n,t_{n+1}]\,,\,\,n=0,\dots,N-1\,,
\end{equation*}
or, equivalently,
\begin{equation*}
   u_{h,\tau}(x_i,t)=\frac1{\tau_{n+1}}
   \left[u_i^{n+1}(t-t_n)+u_i^n(t_{n+1}-t)\right]\qquad\forall\,\,
   i=1,\dots,M\,,\,\,t\in[t_n,t_{n+1}]\,,\,\,n=0,\dots,N-1\,.
\end{equation*}

Replacing the space $H^1(\Omega)$ in \eqref{eq1v} by $V_h$ and applying 
the $\theta$-method for discretization in time (with $\theta\in[0,1]$), one 
obtains
\begin{align}
  &\left(\frac{u_h^{n+1}-u_h^n}{\tau_{n+1}},v_h\right) =
   \theta\,\mu\,\big(u_h^{n+1}(1 - u_h^{n+1}),v_h\big) +
   \theta\,\chi\,\big(u_h^{n+1}\nabla c_h^{n+1},\nabla v_h\big)\nonumber\\
  &\hspace*{10mm}
   +(1-\theta)\,\mu\,\big(u_h^n(1 - u_h^n),v_h\big) + 
   (1-\theta)\,\chi\,\big(u_h^n\nabla c_h^n,\nabla v_h\big) 
   \qquad\forall\,\,v_h\in V_h\,,\,\,n=0,\dots,N-1\,.\label{eq1vd}
\end{align}
Defining the matrices $\mathbb M=(m_{ij})_{i,j=1}^M$ and 
${\mathbb A}^n=(a^n_{ij})_{i,j=1}^M$ with
\begin{equation*}
   m_{ij}=(\phi_j,\phi_i)\,,\qquad 
   a^n_{ij}=-\mu\,\big(\phi_j(1 - u_h^n),\phi_i\big) 
            -\chi\,\big(\phi_j\nabla c_h^n,\nabla\phi_i\big)\,,
\end{equation*}
the discrete variational problem \eqref{eq1vd} can be written in the matrix
form
\begin{equation}\label{eq1m}
   ({\mathbb M}+\theta\,\tau_{n+1}\,{\mathbb A}^{n+1})\,\mathbf{u}^{n+1}=
   ({\mathbb M}-(1-\theta)\,\tau_{n+1}\,{\mathbb A}^n)\,\mathbf{u}^n\,,\qquad
   n=0,\dots,N-1\,.
\end{equation}
The relations \eqref{eq2v} and \eqref{eq3v} suggest to define the coefficients
of $c_h^n$ and $p_h^n$ by
\begin{alignat}{2}
   &c_i^n=c^0(x_i)\,{\mathrm e}^{-\int_0^{t_n}\,p_{h,\tau}(x_i,s)\,{\mathrm d}s}\,,
   &&i=1,\dots,M\,,\,\,n=0,\dots,N\,,\label{eq2vd}\\
   & p_i^n={\mathrm e}^{-t_n/\epsilon}\left[p^0(x_i)
   +\frac1\epsilon\,\int_0^{t_n}\,u_{h,\tau}(x_i,s)\,c_{h,\tau}(x_i,s)\,
   {\mathrm e}^{s/\epsilon}\,{\mathrm d}s\right],\qquad
   &&i=1,\dots,M\,,\,\,n=0,\dots,N\,.\label{eq3vd}
\end{alignat}
Then, for $i=1,\dots,M$ and $n=0,\dots,N-1$, one has
\begin{align}
   &c_i^{n+1}=c_i^n\,{\mathrm e}^{-\int_{t_n}^{t_{n+1}}\,p_{h,\tau}(x_i,s)\,{\mathrm d}s}\,,\\
   & p_i^{n+1}={\mathrm e}^{-\tau_{n+1}/\epsilon}\,p^n_i
   +\frac1\epsilon\,{\mathrm e}^{-t_{n+1}/\epsilon}
   \int_{t_n}^{t_{n+1}}\,u_{h,\tau}(x_i,s)\,c_{h,\tau}(x_i,s)\,
   {\mathrm e}^{s/\epsilon}\,{\mathrm d}s\,.\label{eq3vsf}
\end{align}
A direct computation gives
\begin{alignat}{3}
   c_i^{n+1}&=&\,c_i^n\,{\mathrm e}^{-\tau_{n+1}\,(p^{n+1}_i+p^n_i)/2}\,&,
   \label{eq2vf}\\
   p_i^{n+1}&=&\,\,{\mathrm e}^{-\tau_{n+1}/\epsilon}\,p^n_i
   +\frac1{\tau_{n+1}^2}\Bigg\{&
   \Big(u^{n+1}_i\,(\epsilon-\tau_{n+1})-u^n_i\,\epsilon\Big)
   \Big(c^{n+1}_i\,(\epsilon-\tau_{n+1})-c^n_i\,\epsilon\Big)\nonumber\\
   &&-\,&\Big(u^{n+1}_i\,\epsilon-u^n_i\,(\epsilon+\tau_{n+1})\Big)
     \Big(c^{n+1}_i\,\epsilon-c^n_i\,(\epsilon+\tau_{n+1})\Big)\,
     {\mathrm e}^{-\tau_{n+1}/\epsilon}\nonumber\\
   &&+\,&(u^{n+1}_i-u^n_i)(c^{n+1}_i-c^n_i)\,\epsilon^2
   \left(1-{\mathrm e}^{-\tau_{n+1}/\epsilon}\right)\Bigg\},\label{eq3vf}
\end{alignat}
for $i=1,\dots,M$ and $n=0,\dots,N-1$. Note that the effects described by the 
model \eqref{eq1}--\eqref{eq5}, such as chemotaxis, strongly rely on the 
nonlinear coupling terms. Therefore, all nonlinearities are treated implicitly
in the discrete problem \eqref{eq1m}--\eqref{eq3vd}.

To compute a solution of the nonlinear problem \eqref{eq1m}--\eqref{eq3vd} at
time $t_{n+1}$ (assuming that the solution vectors $\mathbf{u}^{n}$,
$\mathbf{c}^{n}$, and $\mathbf{p}^{n}$ at the previous time instant $t_n$ are
known), we apply simple fixed-point iterations leading to sequences 
$\mathbf{u}^{n+1}_k=(u_{j,k}^{n+1})_{j=1}^M$, 
$\mathbf{c}^{n+1}_k=(c_{j,k}^{n+1})_{j=1}^M$, and
$\mathbf{p}^{n+1}_k=(p_{j,k}^{n+1})_{j=1}^M$. We set
$\mathbf{u}^{n+1}_0=\mathbf{u}^n$, $\mathbf{c}^{n+1}_0=\mathbf{c}^n$,
$\mathbf{p}^{n+1}_0=\mathbf{p}^n$ and then, for $k>0$ and $i=1,\dots,M$, we
define
\begin{alignat}{3}
   c_{i,k}^{n+1}&=&\,\,c_i^n\,
   {\mathrm e}^{-\tau_{n+1}\,(p^{n+1}_{i,k-1}+p^n_i)/2}&\,,\label{eq:cik}\\
   p_{i,k}^{n+1}&=&{\mathrm e}^{-\tau_{n+1}/\epsilon}\,p^n_i
   +\frac1{\tau_{n+1}^2}\Bigg\{&
   \Big(u^{n+1}_{i,k-1}\,(\epsilon-\tau_{n+1})-u^n_i\,\epsilon\Big)
   \Big(c^{n+1}_{i,k}\,(\epsilon-\tau_{n+1})-c^n_i\,\epsilon\Big)\nonumber\\
   &&-\,&\Big(u^{n+1}_{i,k-1}\,\epsilon-u^n_i\,(\epsilon+\tau_{n+1})\Big)
     \Big(c^{n+1}_{i,k}\,\epsilon-c^n_i\,(\epsilon+\tau_{n+1})\Big)\,
     {\mathrm e}^{-\tau_{n+1}/\epsilon}\nonumber\\
   &&+\,&(u^{n+1}_{i,k-1}-u^n_i)(c^{n+1}_{i,k}-c^n_i)\,\epsilon^2
   \left(1-{\mathrm e}^{-\tau_{n+1}/\epsilon}\right)\Bigg\}.\label{eq:pik}
\end{alignat}
The iterate $\mathbf{u}^{n+1}_k$ is computed by solving the linear system
\begin{equation}\label{eq:uk}
   ({\mathbb M}+\theta\,\tau_{n+1}\,{\mathbb A}^{n+1}_{k-1})\,\mathbf{u}^{n+1}_k
   =({\mathbb M}-(1-\theta)\,\tau_{n+1}\,{\mathbb A}^n)\,\mathbf{u}^n\,,
\end{equation}
where the matrix ${\mathbb A}^{n+1}_{k-1}$ is defined by
\begin{equation}\label{eq:akm1}
   {\mathbb A}^{n+1}_{k-1}
   =\Big(-\mu\,\big(\phi_j(1 - u_{h,k-1}^{n+1}),\phi_i\big) 
        -\chi\,\big(\phi_j\nabla c_{h,k}^{n+1},\nabla\phi_i\big)\Big)_{i,j=1}^M
\end{equation}
and $u_{h,k-1}^{n+1}=\pi_h\mathbf{u}^{n+1}_{k-1}$ and 
$c_{h,k}^{n+1}=\pi_h\mathbf{c}^{n+1}_k$ are the finite element functions
corresponding to the coefficient vectors $\mathbf{u}^{n+1}_{k-1}$ and
$\mathbf{c}^{n+1}_k$, respectively.

The linear system \eqref{eq:uk} has the form
\begin{equation}\label{eq:general_system}
   {\mathbb B}\,\mathbf{u}^{n+1}
   ={\mathbb K}\,\mathbf{u}^n
\end{equation}
and it is desirable that this system is positivity preserving, i.e., that
$\mathbf{u}^{n+1}\ge0$ if $\mathbf{u}^n\ge0$. A necessary and sufficient
condition for this property is ${\mathbb B}^{-1}\,{\mathbb K}\ge0$ but this
condition is difficult to verify. Sufficient conditions are formulated in the
following lemma. Note that, throughout the paper, an inequality of the type
$\mathbf{u}^n\ge0$ means that the inequality holds for each component of the
vector $\mathbf{u}^n$. Similarly, the fact that all entries of a matrix 
$\mathbb K$ are non-negative is expressed by ${\mathbb K}\ge0$.

\begin{lemma}\label{lem:lin_syst}
Let the matrices ${\mathbb B}=(b_{ij})_{i,j=1}^M$ and 
${\mathbb K}=(k_{ij})_{i,j=1}^M$ satisfy
\begin{equation*}
  b_{ii}\ge0\,, \quad k_{ii}\ge0\,, \quad 
  b_{ij}\leqslant 0\,, \quad k_{ij} \geqslant 0,
  \qquad\forall\,\,i,j=1,\dots,M\,,\,\,i\neq j\,,
\end{equation*}
and let $\mathbb B$ be a strictly diagonally dominant or an irreducibly
diagonally dominant matrix. Then $\mathbb B$ is an M-matrix and the scheme 
\eqref{eq:general_system} is positivity preserving.
\end{lemma}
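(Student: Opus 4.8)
The plan is to reduce the statement to the single fact that $\mathbb B$ has an entrywise non-negative inverse, and then let the sign conditions on $\mathbb K$ do the rest. Once we know $\mathbb B^{-1}\ge0$, positivity preservation is immediate: the hypotheses $k_{ii}\ge0$ and $k_{ij}\ge0$ for $i\ne j$ say precisely that $\mathbb K\ge0$ entrywise, so $\mathbb B^{-1}\mathbb K\ge0$ as a product of two non-negative matrices, and therefore $\mathbf u^{n+1}=\mathbb B^{-1}\mathbb K\,\mathbf u^n\ge0$ whenever $\mathbf u^n\ge0$. This is exactly the necessary and sufficient condition $\mathbb B^{-1}\mathbb K\ge0$ recorded before the lemma. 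Thus the whole content is concentrated in establishing that $\mathbb B$ is a nonsingular M-matrix, i.e.\ that $\mathbb B^{-1}\ge0$.

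To set this up I would first observe that the diagonal entries are in fact strictly positive. Combining $b_{ij}\le0$ for $i\ne j$ with $b_{ii}\ge0$ and diagonal dominance gives $b_{ii}\ge\sum_{j\ne i}(-b_{ij})\ge0$; in the strictly dominant case the inequality is strict, and in the irreducibly dominant case a vanishing $b_{ii}$ would force the whole $i$-th row of $\mathbb B$ to vanish, contradicting irreducibility. Hence $b_{ii}>0$ for all $i$. Writing $\mathbb D=\mathrm{diag}(b_{11},\dots,b_{MM})$ and $\mathbb N=\mathbb D-\mathbb B$, the off-diagonal sign condition yields $\mathbb N\ge0$ with zero diagonal, so that $\mathbb B=\mathbb D\,(I-\mathbb J)$ with the non-negative Jacobi matrix $\mathbb J=\mathbb D^{-1}\mathbb N\ge0$.

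The decisive step is to prove $\rho(\mathbb J)<1$. In the strictly diagonally dominant case each row sum of $\mathbb J$ equals $\sum_{j\ne i}(-b_{ij})/b_{ii}<1$, so $\|\mathbb J\|_\infty<1$ and therefore $\rho(\mathbb J)\le\|\mathbb J\|_\infty<1$. In the irreducibly diagonally dominant case the row sums are all $\le1$ with strict inequality for at least one index, and $\mathbb J$ inherits the irreducibility of $\mathbb B$; a Perron--Frobenius argument (the spectral radius of an irreducible non-negative matrix lies strictly below the maximal row sum unless all row sums are equal) then gives $\rho(\mathbb J)<1$ as well. Once this is in hand, the Neumann series $\sum_{m=0}^\infty\mathbb J^m$ converges to $(I-\mathbb J)^{-1}$, every term $\mathbb J^m\ge0$, and hence $(I-\mathbb J)^{-1}\ge0$; consequently $\mathbb B$ is nonsingular and $\mathbb B^{-1}=(I-\mathbb J)^{-1}\mathbb D^{-1}\ge0$, which is the asserted M-matrix property.

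I expect the irreducibly diagonally dominant case to be the only genuine obstacle, since there $\rho(\mathbb J)<1$ cannot be read off from a norm and instead requires the Perron--Frobenius/Taussky machinery for weakly dominant irreducible matrices; the strictly dominant case is routine via the $\infty$-norm bound. A clean alternative that handles both cases uniformly is simply to invoke the classical characterization of nonsingular M-matrices (e.g.\ from Varga's \emph{Matrix Iterative Analysis} or Berman--Plemmons), under which a $Z$-matrix that is strictly or irreducibly diagonally dominant with positive diagonal is automatically a nonsingular M-matrix with non-negative inverse.
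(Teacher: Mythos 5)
Your argument is correct, and it reaches the conclusion by a more self-contained route than the paper does. The final step is identical in both cases: once $\mathbb B^{-1}\ge0$ is known, the sign hypotheses give $\mathbb K\ge0$, hence $\mathbb B^{-1}\mathbb K\ge0$ and positivity preservation. The difference lies in how the M-matrix property is obtained. The paper simply cites Theorem~3.27 of Varga's \emph{Matrix Iterative Analysis} and is done in two lines; you instead reprove that theorem from scratch via the Jacobi splitting $\mathbb B=\mathbb D(I-\mathbb J)$, the bound $\rho(\mathbb J)<1$ (by the $\infty$-norm in the strictly dominant case, by Perron--Frobenius in the irreducibly dominant case), and the Neumann series $(I-\mathbb J)^{-1}=\sum_{m\ge0}\mathbb J^m\ge0$. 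This is essentially the standard proof of the cited theorem, so the two arguments are mathematically the same result packaged differently: the citation buys brevity, while your version buys transparency and, notably, makes explicit a point the paper glosses over --- the hypotheses only state $b_{ii}\ge0$, and one must first deduce $b_{ii}>0$ from diagonal dominance (and, in the irreducible case, from the fact that a zero row would contradict irreducibility) before the diagonal can be inverted. Your closing remark that one could instead invoke the classical characterization of nonsingular M-matrices is precisely what the paper does.
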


\begin{proof}According to \cite[Theorem~3.27]{Var00}, $\mathbb B$ is an 
M-matrix. Thus, ${\mathbb B}^{-1}\ge0$ and hence also ${\mathbb
B}^{-1}\,{\mathbb K}\ge0$, which implies the result.
\end{proof}

In general, the linear system \eqref{eq:uk} originating from a standard 
Galerkin discretization does not satisfy the above constraints because the mass 
matrix is non-negative and the stiffness matrix may contain positive
off-diagonal entries. Our numerical results in Section~\ref{sec:numerics} show
that indeed the concentration $\mathbf{u}$ may become negative in some parts of
the computational domain $\Omega$.

\section{FCT stabilization}
\label{sec:fct}

As we will see in Section~\ref{sec:numerics}, the magnitude of the solutions 
gradients can be extremely large in some regions. The solution of the Galerkin
discretization from the previous section may become negative especially in
these regions. As a remedy, in the following we will modify the Galerkin
discretization to guarantee a positivity preservation property. As shown by 
Kuzmin \cite{Kuz09, Kuz12a, KT02}, this property can be readily enforced at the 
discrete level using a conservative manipulation of the mass and stiffness 
matrices. The former will be approximated by its diagonal counterpart $\ML$ 
constructed using row-sum mass lumping, whereas the latter will be modified by
adding an artificial diffusion matrix. To limit the amount of the artificial
diffusion, the FEM-FCT approach will be applied following \cite{Kuz12a}.

Since the methods considered in this section guarantee that the approximate
solutions are non-negative, it is possible to replace the matrix
${\mathbb A}^n$ from the previous section by $\tilde{\mathbb
A}^n=(\tilde{a}^n_{ij})_{i,j=1}^M$ with
\begin{equation*}
   \tilde{a}^n_{ij}=-\mu\,\big(\phi_j(1 - |u_h^n|),\phi_i\big) 
            -\chi\,\big(\phi_j\nabla c_h^n,\nabla\phi_i\big)\,.
\end{equation*}
The matrix $\tilde{\mathbb A}^n$ is more suitable for theoretical
considerations than the matrix ${\mathbb A}^n$. However, a non-negative
approximate solution $u^n_h$, $c^n_h$, $p^n_h$ satisfying a discrete problem
based on the matrix $\tilde{\mathbb A}^n$ will satisfy also the corresponding
discrete problem with the original matrix ${\mathbb A}^n$.

Using the matrix $\tilde{\mathbb A}^n$, we introduce a symmetric artificial 
diffusion matrix ${\mathbb D}^n=(d^n_{ij})_{i,j=1}^M$ defined by
\begin{equation*}
  d^n_{ij}=-\max\{\tilde{a}^n_{ij},0,\tilde{a}^n_{ji}\}\quad
  \mbox{for} \ i\ne j\,,\qquad\quad
  d^n_{ii}=-\sum_{j=1,j \neq i}^M d^n_{ij}\,,
\end{equation*}
and we set ${\mathbb L}^n=\tilde{\mathbb A}^n+{\mathbb D}^n$. Note that 
${\mathbb L}^n=(l^n_{ij})_{i,j=1}^M$ is a Z-matrix (i.e., it has
non-positive off-diagonal entries). Furthermore, we introduce the lumped mass 
matrix $\ML=\mbox{diag}(m_1,\dots,m_M)$ with
\begin{equation*}
   m_i=\sum_{j=1}^M\,m_{ij}\,,\qquad i=1,\dots,M\,.
\end{equation*}
Now, the simplest way to enforce the positivity preservation is to consider the
so-called low-order method corresponding to the so-called high-order method 
\eqref{eq1m} which is defined by
\begin{equation}\label{eq1low}
   (\ML+\theta\,\tau_{n+1}\,{\mathbb L}^{n+1})\,\mathbf{u}^{n+1}=
   (\ML-(1-\theta)\,\tau_{n+1}\,{\mathbb L}^n)\,\mathbf{u}^n\,,\qquad
   n=0,\dots,N-1\,.
\end{equation}
Note that the matrix ${\mathbb L}^{n+1}$ depends on $\mathbf{u}^{n+1}$ and 
$\mathbf{c}^{n+1}$ so that the low-order problem is again nonlinear. In
contrast to the Galerkin discretization \eqref{eq1m}, it is now possible to
assure the positivity preservation for sufficiently small time steps.

\begin{lemma}\label{lem:time_step}
Let the time step $\tau_{n+1}$ satisfy the conditions
\begin{equation}\label{eq:time_step}
   (1-\theta)\,\tau_{n+1}\,l^n_{ii}\le m_i\,,\qquad
   \theta\,\tau_{n+1}\,\Big(\mu\,m_i
   +\chi\,\big(\nabla c_h^{n+1},\nabla\phi_i\big)\Big)<m_i\,,\qquad\quad
   i=1,\dots,M\,.
\end{equation}
Then the matrix $\ML-(1-\theta)\,\tau_{n+1}\,{\mathbb L}^n$ has non-negative
entries and $\ML+\theta\,\tau_{n+1}\,{\mathbb L}^{n+1}$ is an M-matrix.
\end{lemma}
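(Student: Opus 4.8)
The plan is to treat the two assertions separately, since the first is immediate and the second carries the real content. For the first, I would use that $\ML$ is diagonal while $\mathbb L^n$ is a Z-matrix. Hence, for $i\neq j$ the $(i,j)$-entry of $\ML-(1-\theta)\,\tau_{n+1}\,\mathbb L^n$ equals $-(1-\theta)\,\tau_{n+1}\,l^n_{ij}\ge0$, because $l^n_{ij}\le0$, $1-\theta\ge0$, and $\tau_{n+1}>0$. The $(i,i)$-entry equals $m_i-(1-\theta)\,\tau_{n+1}\,l^n_{ii}$, which is non-negative precisely by the first inequality in \eqref{eq:time_step}. This settles the first claim.

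For the second claim I would set $\mathbb B=\ML+\theta\,\tau_{n+1}\,\mathbb L^{n+1}$ with entries $b_{ij}$ and verify the hypotheses of Lemma~\ref{lem:lin_syst}. The off-diagonal sign condition is automatic: since $\mathbb L^{n+1}$ is a Z-matrix and $\ML$ is diagonal, $b_{ij}=\theta\,\tau_{n+1}\,l^{n+1}_{ij}\le0$ for $i\neq j$. It then remains to establish strict diagonal dominance, after which Lemma~\ref{lem:lin_syst} delivers the M-matrix property (and in particular $b_{ii}\ge0$). So the whole matter reduces to the row sums of $\mathbb B$.

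The key computation rests on two identities. First, the artificial diffusion matrix has zero row sums by construction, $d^{n+1}_{ii}=-\sum_{j\neq i}d^{n+1}_{ij}$, so $\sum_j l^{n+1}_{ij}=\sum_j\tilde a^{n+1}_{ij}$. Second, the basis functions form a partition of unity, $\sum_j\phi_j\equiv1$, whence $\sum_j\tilde a^{n+1}_{ij}=-\mu\,(1-|u_h^{n+1}|,\phi_i)-\chi\,(\nabla c_h^{n+1},\nabla\phi_i)$. Combining these with $m_i=\sum_j(\phi_j,\phi_i)=(1,\phi_i)$ gives
\[
   \sum_{j=1}^M b_{ij}
   = m_i-\theta\,\tau_{n+1}\,\big(\mu\,m_i+\chi\,(\nabla c_h^{n+1},\nabla\phi_i)\big)
   +\theta\,\tau_{n+1}\,\mu\,(|u_h^{n+1}|,\phi_i)\,.
\]
The grouped middle term is strictly smaller than $m_i$ by the second inequality in \eqref{eq:time_step}, so the first two terms together are strictly positive, and the last term is non-negative because $|u_h^{n+1}|\ge0$ and the basis functions are non-negative. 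Hence $\sum_j b_{ij}>0$ for every $i$. Since the off-diagonal entries are non-positive, this reads $b_{ii}>\sum_{j\neq i}|b_{ij}|\ge0$, i.e.\ $\mathbb B$ is strictly diagonally dominant with positive diagonal, and Lemma~\ref{lem:lin_syst} finishes the proof.

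The only genuinely delicate point—and the reason for working with $\tilde{\mathbb A}^{n+1}$ rather than $\mathbb A^{n+1}$—is the sign of the reaction contribution: replacing $u_h^{n+1}$ by $|u_h^{n+1}|$ makes the extra term $\theta\,\tau_{n+1}\,\mu\,(|u_h^{n+1}|,\phi_i)$ non-negative, so it can only help, and never spoil, the strict dominance supplied by the second time-step condition. I expect the main obstacle to be purely bookkeeping in the row-sum computation, namely keeping track of the partition-of-unity and zero-row-sum cancellations; once these are in place, the remaining steps are elementary sign checks.
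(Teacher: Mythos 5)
Your proof is correct and follows essentially the same route as the paper: sign inspection plus the first time-step condition for the non-negativity claim, and the zero-row-sum/partition-of-unity computation of $\sum_j b_{ij}$ leading to strict diagonal dominance for the M-matrix claim. The only cosmetic difference is that you conclude via Lemma~\ref{lem:lin_syst} while the paper cites a ``non-negative type'' criterion directly; both are valid.
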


\begin{proof}
The first condition in \eqref{eq:time_step} implies that
$\ML-(1-\theta)\,\tau_{n+1}\,{\mathbb L}^n$ has non-negative diagonal entries.
The off-diagonal entries of this matrix are non-negative as well, since $\ML$
is diagonal and ${\mathbb L}^n$ is a Z-matrix. 

Denoting $\mathbb{B}=\ML+\theta\,\tau_{n+1}\,{\mathbb L}^{n+1}$, one has for
any $i\in\{1,\dots,M\}$
\begin{equation*}
   \sum_{j=1}^M\,b_{ij}=m_i+\theta\,\tau_{n+1}\,\sum_{j=1}^M\,\tilde{a}^{n+1}_{ij}
   =m_i-\theta\,\tau_{n+1}\,\Big(\mu\,\big(1 - |u_h^{n+1}|,\phi_i\big)
   +\chi\,\big(\nabla{c}_h^{n+1},\nabla\phi_i\big)\Big)\,,
\end{equation*}
where we used the fact that $\sum_{j=1}^M\,\phi_j=1$. Since $(1,\phi_i)=m_i$,
it follows from the second condition in \eqref{eq:time_step} that
$\sum_{j=1}^M\,b_{ij}>0$. Thus, $b_{ii}>\sum_{j\neq i}\,|b_{ij}|$, i.e.,
$\mathbb{B}$ is strictly diagonally dominant and hence non-singular. Moreover,
$\mathbb{B}$ is a matrix of non-negative type and hence it is an M-matrix (see,
e.g., \cite[Corollary 3.13]{BJK23}).
\end{proof}

\begin{cor}\label{cor:pos_preserv}
Let the time step $\tau_{n+1}$ satisfy the conditions \eqref{eq:time_step}.
Then the low-order scheme \eqref{eq1low} is positivity preserving, i.e.,
\begin{equation}\label{eq:pos_preserv}
   \mathbf{u}^n\geqslant0\quad \Rightarrow\quad \mathbf{u}^{n+1}\geqslant 0\,.
\end{equation}
\end{cor}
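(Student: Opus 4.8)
The plan is to recognize the low-order scheme \eqref{eq1low} as an instance of the generic linear system \eqref{eq:general_system} and then apply Lemma~\ref{lem:lin_syst}. Concretely, I would set
\[
   {\mathbb B}=\ML+\theta\,\tau_{n+1}\,{\mathbb L}^{n+1}\,,\qquad
   {\mathbb K}=\ML-(1-\theta)\,\tau_{n+1}\,{\mathbb L}^n\,,
\]
so that \eqref{eq1low} reads exactly ${\mathbb B}\,\mathbf{u}^{n+1}={\mathbb K}\,\mathbf{u}^n$. It then remains to check that the hypotheses of Lemma~\ref{lem:lin_syst} are met for these two matrices.

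All the required sign and structure properties are already furnished by Lemma~\ref{lem:time_step}. Under the time-step conditions \eqref{eq:time_step}, that lemma guarantees that ${\mathbb K}$ has non-negative entries (so in particular $k_{ii}\ge0$ and $k_{ij}\ge0$ for $i\neq j$) and that ${\mathbb B}$ is an M-matrix, established there via strict diagonal dominance together with positive diagonal entries and non-positive off-diagonal entries. The latter off-diagonal sign is immediate because $\ML$ is diagonal while ${\mathbb L}^{n+1}$ is a Z-matrix, whence $b_{ij}=\theta\,\tau_{n+1}\,l^{n+1}_{ij}\le0$ for $i\neq j$. Thus the assumptions $b_{ii}\ge0$, $k_{ii}\ge0$, $b_{ij}\le0$, $k_{ij}\ge0$ of Lemma~\ref{lem:lin_syst} all hold, and ${\mathbb B}$ is (strictly) diagonally dominant, so Lemma~\ref{lem:lin_syst} applies and yields ${\mathbb B}^{-1}\ge0$ and ${\mathbb B}^{-1}{\mathbb K}\ge0$. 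Consequently, $\mathbf{u}^n\ge0$ forces $\mathbf{u}^{n+1}={\mathbb B}^{-1}{\mathbb K}\,\mathbf{u}^n\ge0$, which is precisely \eqref{eq:pos_preserv}.

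Since the heavy lifting is done in Lemma~\ref{lem:time_step}, there is no genuine analytic obstacle; the corollary is essentially a combination of the two preceding lemmas. The one point worth flagging is that ${\mathbb B}$ depends on the unknowns $\mathbf{u}^{n+1}$ and $\mathbf{c}^{n+1}$ through ${\mathbb L}^{n+1}$, so \eqref{eq1low} is nonlinear rather than a literal fixed linear system. The argument above should therefore be read as conditional on a given solution: for \emph{any} solution $(\mathbf{u}^{n+1},\mathbf{c}^{n+1})$ of the nonlinear low-order problem for which \eqref{eq:time_step} holds, the matrices ${\mathbb B},{\mathbb K}$ are determined, the identity ${\mathbb B}\,\mathbf{u}^{n+1}={\mathbb K}\,\mathbf{u}^n$ is satisfied, and the M-matrix property then forces the sign conclusion. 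I would state this explicitly so that the positivity is understood to hold for every such solution, independently of how it is computed.
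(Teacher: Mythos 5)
Your proof is correct and follows essentially the same route as the paper: both rest entirely on Lemma~\ref{lem:time_step}, which gives $\ML-(1-\theta)\,\tau_{n+1}\,{\mathbb L}^n\ge0$ and the M-matrix property of $\ML+\theta\,\tau_{n+1}\,{\mathbb L}^{n+1}$, hence a non-negative inverse and the sign conclusion. Your extra remark that the argument is conditional on a given solution of the nonlinear problem (since ${\mathbb L}^{n+1}$ depends on $\mathbf{u}^{n+1}$, $\mathbf{c}^{n+1}$) is a sensible clarification that the paper leaves implicit.
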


\begin{proof}
According to Lemma~\ref{lem:time_step}, the matrix
$\ML+\theta\,\tau_{n+1}\,{\mathbb L}^{n+1}$ is non-singular,
$(\ML+\theta\,\tau_{n+1}\,{\mathbb L}^{n+1})^{-1}\ge0$, and 
$\ML-(1-\theta)\,\tau_{n+1}\,{\mathbb L}^n\ge0$, which immediately implies 
\eqref{eq:pos_preserv}.
\end{proof}

\begin{remark}\label{rem:pos_preserv}
The second condition in \eqref{eq:time_step} involves ${c}^{n+1}_h$ which
implicitly depends on $\tau_{n+1}$ through $\mathbf{p}^{n+1}$ and hence also
through $\mathbf{u}^{n+1}$. Therefore, it is desirable to replace this
condition by a condition independent of $c^{n+1}_h$. This is possible since we
will show that the values of ${c}^{n+1}_h$ are in the interval $[0,1]$. Then,
employing \eqref{eq:grad_est}, one gets
\begin{displaymath}
   \big(\nabla{c}_h^{n+1},\nabla\phi_i\big)
   =\sum_{j=1}^M\,{c}^{n+1}_j\,(\nabla\phi_j,\nabla\phi_i)
   \le\sum_{K\ni x_i}\,\sum_{j=1}^M\,\|\nabla\phi_j\|_{L^2(K)}^{}\,
   \|\nabla\phi_i\|_{L^2(K)}^{}
   \le n_{\mathrm{v}}\,\kappa^2\sum_{K\ni x_i}h_K^{d-2}\,,
\end{displaymath}
where $n_{\mathrm{v}}$ is the number of vertices of a cell in ${\mathcal T}_h$
($n_{\mathrm{v}}=d+1$ for simplices, $n_{\mathrm{v}}=4$ for quadrilaterals, and
$n_{\mathrm{v}}=8$ for hexahedra). Thus, if the time step $\tau_{n+1}$
satisfies
\begin{equation}\label{eq:time_step2}
   \theta\,\tau_{n+1}\left(\mu\,m_i
   +\chi\,n_{\mathrm{v}}\,\kappa^2\sum_{K\ni x_i}h_K^{d-2}\right)<m_i\,,\qquad
   i=1,\dots,M\,,
\end{equation}
and ${c}_h^{n+1}\in[0,1]$, then the second condition in \eqref{eq:time_step}
holds. Note that \eqref{eq:time_step2} may be significantly more restrictive
than \eqref{eq:time_step}.
\end{remark}

To prove that the low-order discretization consisting of the equations
\eqref{eq1low}, \eqref{eq2vf}, and \eqref{eq3vf} has a solution, we shall use
the following consequence of Brouwer's fixed-point theorem.

\begin{lemma}\label{lem:brouwer}
Let $X$ be a finite-dimensional Hilbert space with inner product
$(\cdot,\cdot)_X^{}$ and norm $\|\cdot\|_X^{}$. Let $P:X\to X$ be a continuous
mapping and $K>0$ a real number such that $(Px,x)_X^{}>0$ for any $x\in X$ with
$\|x\|_X^{}=K$. Then there exists $x\in X$ such that $\|x\|_X^{}<K$ and $Px=0$.
\end{lemma}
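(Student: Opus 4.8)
The plan is to argue by contradiction using Brouwer's fixed-point theorem on the closed ball $\bar B_K=\{x\in X:\|x\|_X\le K\}$. Since $X$ is a finite-dimensional Hilbert space, it is isometrically isomorphic to $\mathbb R^m$ with $m=\dim X$, so $\bar B_K$ is a nonempty compact convex set and Brouwer's theorem is applicable to any continuous self-map of $\bar B_K$. I would therefore suppose, for contradiction, that $Px\neq0$ for \emph{every} $x\in\bar B_K$, not merely on the sphere.

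Under this assumption I would introduce the auxiliary map
\[
   g(x)=-K\,\frac{Px}{\|Px\|_X}\,.
\]
The first task is to check that $g$ is a continuous self-map of $\bar B_K$. Continuity of $P$ together with $Px\neq0$ on the compact set $\bar B_K$ keeps the denominator bounded away from zero, so $g$ is continuous; and since $\|g(x)\|_X=K$ for all $x$, the image of $g$ lies on the sphere of radius $K$, which is contained in $\bar B_K$. Hence $g:\bar B_K\to\bar B_K$ is continuous and Brouwer's theorem provides a fixed point $x^*=g(x^*)$.

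It then remains to extract the contradiction. From $\|g(x^*)\|_X=K$ one gets $\|x^*\|_X=K$, so $x^*$ sits on the sphere. Taking the inner product of the identity $x^*=-K\,Px^*/\|Px^*\|_X$ with $x^*$ yields
\[
   K^2=\|x^*\|_X^2=-\frac{K}{\|Px^*\|_X}\,(Px^*,x^*)_X\,,
\]
so that $(Px^*,x^*)_X=-K\,\|Px^*\|_X<0$ because $Px^*\neq0$. This contradicts the hypothesis $(Px,x)_X>0$ for all $x$ with $\|x\|_X=K$. Consequently the assumption fails and there must exist $x\in\bar B_K$ with $Px=0$. Finally I would observe that such a zero cannot lie on the sphere, since $\|x\|_X=K$ would force $(Px,x)_X>0$, which is impossible when $Px=0$; hence $\|x\|_X<K$, as claimed.

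I expect the only genuine point of care to be the construction and verification of $g$: it is precisely the contradiction hypothesis $Px\neq0$ on \emph{all} of $\bar B_K$ (rather than just on the boundary) that makes $g$ well-defined and continuous, and one must note that $\bar B_K$ is compact and convex so that Brouwer applies. The sign computation and the concluding observation that the zero is interior are routine.
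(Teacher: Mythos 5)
Your proof is correct and complete; the paper itself does not prove this lemma but simply cites Temam (p.~164, Lemma~1.4), and your contradiction argument via the auxiliary map $g(x)=-K\,Px/\|Px\|_X$ is precisely the classical proof given in that reference. The one point of care you flag --- that the contradiction hypothesis must be $Px\neq0$ on the whole closed ball so that $g$ is well defined and Brouwer applies --- is exactly the right one, and your sign computation and the final observation that the zero must be interior are both sound.
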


\begin{proof}See \cite[p.~164, Lemma 1.4]{Temam77}.\end{proof}

\begin{theorem}\label{thm:low_order_existence}
Consider any $n\in\{0,\dots,N-1\}$ and let
$\mathbf{u}^n,\mathbf{c}^n,\mathbf{p}^n\in{\mathbb R}^M$ satisfy
$\mathbf{u}^n\ge0$, $1\ge\mathbf{c}^n\ge0$, $\mathbf{p}^n\ge0$. Let the time 
step $\tau_{n+1}$ satisfy the conditions
\begin{equation}\label{eq:time_step3}
   (1-\theta)\,\tau_{n+1}\,l^n_{ii}\le m_i\,,\qquad
   \theta\,\tau_{n+1}\left(\mu\,m_i
   +\chi\,n_{\mathrm{v}}\,\kappa^2\sum_{K\ni x_i}h_K^{d-2}\right)<m_i\,,\qquad
   i=1,\dots,M\,.
\end{equation}
Then there exist vectors $\mathbf{u}^{n+1},\mathbf{c}^{n+1}, 
\mathbf{p}^{n+1}\in{\mathbb R}^M$ satisfying \eqref{eq1low}, \eqref{eq2vf}, 
\eqref{eq3vf} and $\mathbf{u}^{n+1}\ge0$, $1\ge\mathbf{c}^{n+1}\ge0$, 
$\mathbf{p}^{n+1}\ge0$.
\end{theorem}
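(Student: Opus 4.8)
The plan is to recast the one-step problem \eqref{eq1low}, \eqref{eq2vf}, \eqref{eq3vf} as a single fixed-point problem for the triple $(\mathbf{u},\mathbf{c},\mathbf{p})$ and to solve it with Lemma~\ref{lem:brouwer}. I would define a map $\Phi$ as follows: given $(\mathbf{u},\mathbf{c},\mathbf{p})$, set $\hat c_i=c_i^n\,{\mathrm e}^{-\tau_{n+1}(p_i+p_i^n)/2}$ as in \eqref{eq2vf}, let $\hat p_i$ be the right-hand side of \eqref{eq3vf} evaluated at $(u_i,\hat c_i)$ (equivalently, its integral representation \eqref{eq3vsf} with the time-interpolants built from the nodal values $u_i^n,u_i$ and $c_i^n,\hat c_i$), and let $\hat{\mathbf{u}}=\mathbb{B}^{-1}\mathbf{r}$ be the solution of the linear system \eqref{eq:uk}, where $\mathbb{B}=\ML+\theta\,\tau_{n+1}\,{\mathbb L}^{n+1}$ is assembled from the convective part $\nabla\hat c_h$ and the reaction part built from $\mathbf{u}$, and $\mathbf{r}=(\ML-(1-\theta)\,\tau_{n+1}\,{\mathbb L}^n)\,\mathbf{u}^n$ is a fixed, known vector (all $n$-level quantities are data). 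A fixed point of $\Phi$ solves \eqref{eq1low}, \eqref{eq2vf}, \eqref{eq3vf} simultaneously; the device of building $\Phi$ from the three update formulas lets Brouwer handle the $c$--$p$ coupling globally, so I need never solve the nonlinear nodal $c$--$p$ subsystem on its own.

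Next I would exhibit an invariant compact convex set $S=\{(\mathbf{u},\mathbf{c},\mathbf{p}):0\le\mathbf{u}\le R\,\mathbf{1},\ 0\le\mathbf{c}\le\mathbf{1},\ 0\le\mathbf{p}\le Q\,\mathbf{1}\}$. On $S$ one has $\mathbf{c}\in[0,1]$, so by Remark~\ref{rem:pos_preserv} the second condition of \eqref{eq:time_step3} implies the second condition of \eqref{eq:time_step}; hence Lemma~\ref{lem:time_step} makes $\mathbb{B}$ an M-matrix and, with the first condition, $\mathbf{r}\ge0$, so $\hat{\mathbf{u}}=\mathbb{B}^{-1}\mathbf{r}\ge0$ exactly as in Corollary~\ref{cor:pos_preserv}. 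The sign structure of the updates gives $\hat c_i\in[0,c_i^n]\subseteq[0,1]$ (since $p_i,p_i^n\ge0$), while the integral form \eqref{eq3vsf}, whose integrand is a product of the non-negative time-interpolants of $u$ and $c$, yields $\hat p_i\ge{\mathrm e}^{-\tau_{n+1}/\epsilon}p_i^n\ge0$ and, the integrand being bounded by $R$ on $S$, an upper bound $\hat p_i\le Q$ for a suitable $Q$. The decisive estimate is a uniform sup-bound on $\hat{\mathbf{u}}$: at the index $i_0$ of its largest component, $b_{i_0 j}\le0$ for $j\ne i_0$ and $0\le\hat u_j\le\hat u_{i_0}$ give $r_{i_0}\ge\hat u_{i_0}\sum_j b_{i_0 j}$, and the row-sum computation from the proof of Lemma~\ref{lem:time_step} together with $\mathbf{c}\in[0,1]$ and \eqref{eq:time_step3} bounds $\sum_j b_{i_0 j}\ge m_{i_0}-\theta\,\tau_{n+1}(\mu\,m_{i_0}+\chi\,n_{\mathrm v}\,\kappa^2\sum_{K\ni x_{i_0}}h_K^{d-2})=:\beta_{i_0}>0$. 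Hence $\|\hat{\mathbf{u}}\|_\infty\le\|\mathbf{r}\|_\infty/\min_i\beta_i=:R$, which fixes $R$ (and thereby $Q$) so that $\Phi(S)\subseteq S$.

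To conclude I would check that $\Phi$ is continuous on $S$ (the $c$- and $p$-updates are explicit continuous functions, and $\mathbb{B}$ stays a nonsingular M-matrix throughout $S$, so $\hat{\mathbf c}\mapsto\mathbb{B}^{-1}\mathbf{r}$ is continuous), extend $\Phi$ to all of $X={\mathbb R}^{3M}$ by precomposing with the continuous projection $\Pi_S$ onto the box $S$, and apply Lemma~\ref{lem:brouwer} to $P=\mathrm{id}-\Phi\circ\Pi_S$. Since $\Phi\circ\Pi_S$ has range in the bounded set $S$, there is $C>0$ with $(P\mathbf{x},\mathbf{x})=\|\mathbf{x}\|^2-(\Phi\Pi_S\mathbf{x},\mathbf{x})\ge\|\mathbf{x}\|(\|\mathbf{x}\|-C)>0$ once $\|\mathbf{x}\|=K>C$, so Lemma~\ref{lem:brouwer} yields a zero $\mathbf{x}^*=\Phi(\Pi_S\mathbf{x}^*)\in S$, whence $\Pi_S\mathbf{x}^*=\mathbf{x}^*$ and $\mathbf{x}^*$ is a genuine fixed point of $\Phi$. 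The resulting $(\mathbf{u}^{n+1},\mathbf{c}^{n+1},\mathbf{p}^{n+1})$ satisfies \eqref{eq1low}, \eqref{eq2vf}, \eqref{eq3vf} with $\mathbf{u}^{n+1}\ge0$, $1\ge\mathbf{c}^{n+1}\ge0$, $\mathbf{p}^{n+1}\ge0$, as claimed.

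I expect the main obstacle to be the uniform sup-bound $R$ on the $\mathbf{u}$-component, i.e.\ the discrete maximum principle that is independent of the iterate and renders $S$ invariant; it is precisely here that the second condition in \eqref{eq:time_step3} is used, through the positive lower bound $\beta_i$ on the row sums of $\mathbb{B}$. A secondary difficulty is the logical interdependence flagged in Remark~\ref{rem:pos_preserv}: the M-matrix property of $\mathbb{B}$ needs $\mathbf{c}^{n+1}\in[0,1]$, which in turn must be produced by the $p$-update; confining the argument to $S$ (where $\mathbf{c}\in[0,1]$ is imposed and then shown to be preserved) closes this loop. Controlling the sign and continuity of the $\mathbf{p}$-update from the unwieldy closed form \eqref{eq3vf} is cleanest when done through the integral representation \eqref{eq3vsf}, where non-negativity is immediate.
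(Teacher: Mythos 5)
Your proposal is correct, but it takes a genuinely different route from the paper's proof. The paper eliminates $\mathbf{c}$ through $\mathbf{c}=\mathbf{C}(\mathbf{p})$, forms a residual operator $P$ on all of ${\mathbb R}^{2M}$, and verifies the hypothesis of Lemma~\ref{lem:brouwer} by a coercivity estimate: the logistic reaction contributes a cubic term $C_2\,\|\mathbf{u}\|_M^3$ that dominates all lower-order terms for large $\|\mathbf{U}\|$. Since that estimate must hold globally, the paper first replaces $u_h^{n+1}$ and $p_i^{n+1}$ by their absolute values (the matrix $\tilde{\mathbb A}$ and \eqref{eq2vfmod}) and recovers the signs, hence \eqref{eq2vf}, only a posteriori via Corollary~\ref{cor:pos_preserv}. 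You instead build a self-map of a compact convex box, so positivity is part of the invariant set and the sign modifications become unnecessary; the price is that you must prove invariance of the $\mathbf{u}$-component, which you do with a discrete maximum principle, namely the bound $\|\mathbb{B}^{-1}\mathbf{r}\|_\infty\le\|\mathbf{r}\|_\infty/\min_i\beta_i$ resting on the Z-matrix structure of $\mathbb{B}$ and the positive row-sum lower bound extracted from \eqref{eq:time_step3} exactly as in Lemma~\ref{lem:time_step} and Remark~\ref{rem:pos_preserv}. This is the one piece of real work your proof needs that the paper's does not (the paper never requires an upper bound on $\mathbf{u}^{n+1}$), and in exchange you obtain an explicit a priori $\ell^\infty$ bound on $\mathbf{u}^{n+1}$ as a by-product; this does not contradict the paper's later remark on the lack of upper bounds, since your $R$ is data-dependent and in general exceeds $1$. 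The paper's coercivity route also transfers essentially verbatim to the FCT scheme in Theorem~\ref{thm:fct_existence}, where the limited antidiffusive fluxes would make your $\ell^\infty$ bookkeeping more delicate. Two cosmetic slips, neither of which affects the argument: the linear system you solve is \eqref{eq1low} with frozen coefficients rather than \eqref{eq:uk}, and the integrand bound in the $p$-update should be $\max\{R,\|\mathbf{u}^n\|_\infty\}$ rather than $R$.
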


\begin{proof}
To get rid of the exponential dependence on $\mathbf{p}^{n+1}$ when estimating
the nonlinear terms in \eqref{eq1low}, we replace \eqref{eq2vf} by
\begin{equation}\label{eq2vfmod}
   c_i^{n+1}=c_i^n\,{\mathrm e}^{-\tau_{n+1}\,(|p^{n+1}_i|+p^n_i)/2}\,,\qquad
   i=1,\dots,M\,.
\end{equation}
At the end of the proof, we will show that $\mathbf{p}^{n+1}\ge0$ so that the
original relation \eqref{eq2vf} will be recovered.

For $u,p\in{\mathbb R}$ and $i=1,\dots,M$, we introduce the notation 
\begin{equation}\label{eq:Ci}
   C_i(p)=c_i^n\,{\mathrm e}^{-\tau_{n+1}\,(|p|+p^n_i)/2}
\end{equation}
and
\begin{align}
   P_i(u,p)=\,\,\frac1{\tau_{n+1}^2}&\,u\,C_i(p)\,
   \Bigg\{(\epsilon-\tau_{n+1})^2
   +\epsilon^2\left(1-2\,{\mathrm e}^{-\tau_{n+1}/\epsilon}\right)\Bigg\}
   \nonumber\\
   +\frac\epsilon{\tau_{n+1}^2}&\,u\,c^n_i\,
   \Bigg\{\tau_{n+1}\left(1+{\mathrm e}^{-\tau_{n+1}/\epsilon}\right)
   -2\,\epsilon\left(1-{\mathrm e}^{-\tau_{n+1}/\epsilon}\right)\Bigg\}
   \nonumber\\
   +\frac\epsilon{\tau_{n+1}^2}&\,u^n_i\,C_i(p)\,
   \Bigg\{\tau_{n+1}\left(1+{\mathrm e}^{-\tau_{n+1}/\epsilon}\right)
   -2\,\epsilon\left(1-{\mathrm e}^{-\tau_{n+1}/\epsilon}\right)\Bigg\}
   \nonumber\\
   +\frac1{\tau_{n+1}^2}&\,u^n_i\,c^n_i\,
   \Bigg\{\epsilon^2\left(2-{\mathrm e}^{-\tau_{n+1}/\epsilon}\right)
   -(\epsilon+\tau_{n+1})^2\,{\mathrm e}^{-\tau_{n+1}/\epsilon}\Bigg\}
   +{\mathrm e}^{-\tau_{n+1}/\epsilon}\,p^n_i\,.\label{eq:Pi}
\end{align}
Then, the validity of \eqref{eq2vfmod} and \eqref{eq3vf} is equivalent to
\begin{equation*}
   c^{n+1}_i=C_i(p^{n+1}_i)\,,\quad p^{n+1}_i=P_i(u^{n+1}_i,p^{n+1}_i)\,,
   \qquad i=1,\dots,M\,.
\end{equation*}
Note that
\begin{equation}\label{eq:P_i_est}
   |P_i(u,p)|\le(|u|+u^n_i)\,c^n_i
   \left(\frac{2\,\epsilon+\tau_{n+1}}{\tau_{n+1}}\right)^2+p^n_i\qquad
   \forall\,\,u,p\in{\mathbb R}\,,\,\,i=1,\dots,M\,.
\end{equation}
Furthermore, for $\mathbf{u},\mathbf{p}\in{\mathbb R}^M$ and $i,j=1,\dots,M$, 
we denote
\begin{align}
   &A_{ij}(\mathbf{u},\mathbf{p})=
   -\mu\,\big(\phi_j(1 -|\pi_h\mathbf{u}|),\phi_i\big)
   -\chi\,\big(\phi_j\nabla(\pi_h\mathbf{C}(\mathbf{p})),\nabla\phi_i\big)\,,
   \label{eq:Aij}\\
   &D_{ij}(\mathbf{u},\mathbf{p})=
   -\max\{A_{ij}(\mathbf{u},\mathbf{p}),0,A_{ji}(\mathbf{u},\mathbf{p})\}
   \quad\mbox{for} \ i\ne j\,,\qquad\quad
   D_{ii}(\mathbf{u},\mathbf{p})=
   -\sum_{j=1,j \neq i}^M D_{ij}(\mathbf{u},\mathbf{p})\,,\label{eq:Dij}\\
   &S_i(\mathbf{u},\mathbf{p})=m_i\,u_i
   +\theta\,\tau_{n+1}\,\sum_{j=1}^M\,(A_{ij}(\mathbf{u},\mathbf{p})
   +D_{ij}(\mathbf{u},\mathbf{p}))\,u_j
   -[(\ML-(1-\theta)\,\tau_{n+1}\,{\mathbb L}^n)\,\mathbf{u}^n]_i\,,
\end{align}
where $\mathbf{C}(\mathbf{p})=(C_i(p_i))_{i=1}^M$. Then \eqref{eq1low} with
$c^{n+1}_h$ defined by \eqref{eq2vfmod} is equivalent to
\begin{equation*}
   S_i(\mathbf{u}^{n+1},\mathbf{p}^{n+1})=0\,,\qquad i=1,\dots,M\,.
\end{equation*}
Therefore, defining the operator $P:{\mathbb R}^{2M}\to{\mathbb R}^{2M}$ by
\begin{equation}\label{eq:P}
   P\,\mathbf{U}=(S_1(\mathbf{u},\mathbf{p}),\dots,S_M(\mathbf{u},\mathbf{p}),
                  p_1-P_1(u_1,p_1),\dots,p_M-P_M(u_M,p_M))\qquad
   \forall\,\,\mathbf{U}=(\mathbf{u},\mathbf{p})\in{\mathbb R}^{2M}\,,
\end{equation}
the vectors $\mathbf{u}^{n+1}$, $\mathbf{c}^{n+1}$, $\mathbf{p}^{n+1}$ are a
solution of \eqref{eq1low}, \eqref{eq2vfmod}, \eqref{eq3vf} if and only if
$\mathbf{U}=(\mathbf{u}^{n+1},\mathbf{p}^{n+1})$ satisfies $P\,\mathbf{U}=0$ 
and $\mathbf{c}^{n+1}=\mathbf{C}(\mathbf{p}^{n+1})$.

To show that the equation $P\,\mathbf{U}=0$ has a solution, we will verify the
assumptions of Lemma~\ref{lem:brouwer}. Since it is obvious that the operator
$P$ is continuous, it suffices to investigate the product
$(P\,\mathbf{U},\mathbf{U})$, where $(\cdot,\cdot)$ is the Euclidean inner
product in ${\mathbb R}^{2M}$. We will denote the corresponding norm by 
$\|\cdot\|$. The Euclidean norm in ${\mathbb R}^M$ will be denoted by
$\|\cdot\|_M^{}$. Since the matrix $(D_{ij}(\mathbf{u},\mathbf{p}))_{i,j=1}^M$
is symmetric and has zero row sums and non-positive off-diagonal entries, one
obtains
\begin{equation*}
   \sum_{i,j=1}^M\,u_i\,D_{ij}(\mathbf{u},\mathbf{p})\,u_j
   =-\frac12\,\sum_{i,j=1}^M\,D_{ij}(\mathbf{u},\mathbf{p})(u_i-u_j)^2\ge0\qquad
   \forall\,\,\mathbf{u},\mathbf{p}\in{\mathbb R}^M\,.
\end{equation*}
Furthermore, since $0\le\mathbf{C}(\mathbf{p})\le1$, the expressions
$\big(\phi_j\nabla(\pi_h\mathbf{C}(\mathbf{p})),\nabla\phi_i\big)$ can be
bounded independently of $\mathbf{p}$. Therefore, using the equivalence of
norms on finite-dimensional spaces, one obtains
\begin{equation*}
   \theta\,\tau_{n+1}\,\sum_{i,j=1}^M\,u_i\,(A_{ij}(\mathbf{u},\mathbf{p})
   +D_{ij}(\mathbf{u},\mathbf{p}))\,u_j\ge\theta\,\tau_{n+1}\,\mu\,
   \|\pi_h\mathbf{u}\|_{L^3(\Omega)}^3-C_1\,\|\mathbf{u}\|_M^2\ge
   C_2\,\|\mathbf{u}\|_M^3-C_1\,\|\mathbf{u}\|_M^2\,,
\end{equation*}
where $C_1$ and $C_2$ are positive constants independent of $\mathbf{u}$ and
$\mathbf{p}$. Thus,
\begin{equation}\label{eq:Si_est}
   \sum_{i=1}^M\,u_i\,S_i(\mathbf{u},\mathbf{p})\ge
   C_2\,\|\mathbf{u}\|_M^3-C_1\,\|\mathbf{u}\|_M^2-C_3\,\|\mathbf{u}\|_M^{}\,,
\end{equation}
where $C_3=\|(\ML-(1-\theta)\,\tau_{n+1}\,{\mathbb L}^n)\,\mathbf{u}^n\|_M^{}$.
Finally, using \eqref{eq:P_i_est}, it follows that
\begin{equation*}
   \sum_{i=1}^M\,p_i\,(p_i-P_i(u_i,p_i))\ge
   \|\mathbf{p}\|_M^2-C_4\,\|\mathbf{p}\|_M^{}\,\|\mathbf{u}\|_M^{}
   -C_5\,\|\mathbf{p}\|_M^{}
\end{equation*}
with positive constants $C_4$ and $C_5$ independent of $\mathbf{u}$ and
$\mathbf{p}$. Applying the Young inequality, the previous two inequalities
imply that there exist positive constants $C_6$ and $C_7$ such that
\begin{equation*}
   (P\,\mathbf{U},\mathbf{U})\ge\frac12\,\|\mathbf{U}\|^2+
   C_2\,\|\mathbf{u}\|_M^3-C_6\,\|\mathbf{u}\|_M^2-C_7\ge
   \frac12\,\|\mathbf{U}\|^2-\frac{C_6^3}{C_2^2}-C_7
   \qquad\forall\,\,\mathbf{U}=(\mathbf{u},\mathbf{p})\in{\mathbb R}^{2M}\,.
\end{equation*}
Thus, for any $K>\sqrt{2\,C_6^3/C_2^2+2\,C_7}$, one has
$(P\,\mathbf{U},\mathbf{U})>0$ for any $\mathbf{U}\in{\mathbb R}^{2M}$ with 
$\|\mathbf{U}\|=K$. Therefore, according to Lemma~\ref{lem:brouwer}, there
exists a solution $\mathbf{U}$ of the equation $P\,\mathbf{U}=0$ and hence also
a solution $\mathbf{u}^{n+1}$, $\mathbf{c}^{n+1}$, $\mathbf{p}^{n+1}$ of 
\eqref{eq1low}, \eqref{eq2vfmod}, and \eqref{eq3vf}.

It immediately follows from \eqref{eq2vfmod} that $0\le\mathbf{c}^{n+1}\le1$.
Thus, according to Corollary~\ref{cor:pos_preserv} and
Remark~\ref{rem:pos_preserv}, the solution satisfies $\mathbf{u}^{n+1}\ge0$.
Since \eqref{eq3vf} is equivalent to \eqref{eq3vsf}, one also has
$\mathbf{p}^{n+1}\ge0$ and hence \eqref{eq2vf} is satisfied as well.
\end{proof}

Although the solution of \eqref{eq1low}, \eqref{eq2vf}, \eqref{eq3vf} does not
possess negative values under the time step restrictions \eqref{eq:time_step},
it is usually very inaccurate since too much artificial diffusion is introduced
by the modifications leading to the low-order method \eqref{eq1low},
cf.~Section~\ref{sec:hapto}. Therefore,
in the FEM-FCT methodology, a correction term $\overline{\mathbf{f}}^{n+1}$ is 
added in such a way that the method becomes less diffusive while negative
values are still excluded. This leads to an extension of \eqref{eq1low} in 
the form
\begin{equation*}
   (\ML+\theta\,\tau_{n+1}\,{\mathbb L}^{n+1})\,\mathbf{u}^{n+1}=
   (\ML-(1-\theta)\,\tau_{n+1}\,{\mathbb L}^n)\,\mathbf{u}^n
   +\overline{\mathbf{f}}^{n+1}\,.
\end{equation*}
The high-order method \eqref{eq1m} (with ${\mathbb A}^n$ replaced by 
$\tilde{\mathbb A}^n$) is recovered if
\begin{equation}\label{eq:recover}
   \overline{\mathbf{f}}^{n+1}=
   (\ML-{\mathbb M})(\mathbf{u}^{n+1}-\mathbf{u}^n)
   +\theta\,\tau_{n+1}\,{\mathbb D}^{n+1}\,\mathbf{u}^{n+1}
   +(1-\theta)\,\tau_{n+1}\,{\mathbb D}^n\,\mathbf{u}^n\,.
\end{equation}
Since ${\mathbb D}^n$ has zero row sums, one can write
\begin{equation*}
   ({\mathbb D}^n\,\mathbf{u}^n)_i=\sum_{j=1}^M\,d^n_{ij}\,(u^n_j-u^n_i)\,,
   \qquad i=1,\dots,M\,.
\end{equation*}
For the terms with the matrices ${\mathbb D}^{n+1}$ and $\ML-{\mathbb M}$
(which also have zero row sums), one can proceed analogously and hence
\eqref{eq:recover} holds if an only if
\begin{equation*}
   \overline{\mathbf{f}}^{n+1}=\Bigg(\sum_{j=1}^M\,f^{n+1}_{ij}\Bigg)_{i=1}^M\,,
\end{equation*}
where the algebraic fluxes $f^{n+1}_{ij}$ are given by
\begin{equation}\label{eq:fluxes}
   f^{n+1}_{ij}=-m_{ij}\,(u_j^{n+1}-u_i^{n+1})+m_{ij}\,(u^n_j-u^n_i)
   +\theta\,\tau_{n+1}\,d^{n+1}_{ij}\,(u_j^{n+1}-u_i^{n+1})
   +(1-\theta)\,\tau_{n+1}\,d^n_{ij}\,(u^n_j-u^n_i)\,.
\end{equation}
Because $\mathbb M$, ${\mathbb D}^{n+1}$, and ${\mathbb D}^n$ are symmetric
matrices, one has $f^{n+1}_{ij} = - f^{n+1}_{ji}$. Note also that the fluxes 
depend on (unknown) values of the approximate solution at time level $t_{n+1}$.

Now, the idea of the FCT approach is to limit the fluxes $f^{n+1}_{ij}$ by
solution dependent correction factors $\alpha^{n+1}_{ij}\in[0,1]$ called
limiters so that the non-negativity of the approximate solution can be
guaranteed but less artificial diffusion is introduced than in case of the
low-order method. This leads to the discrete problem
\begin{equation}\label{eq1fct}
   (\ML+\theta\,\tau_{n+1}\,{\mathbb L}^{n+1})\,\mathbf{u}^{n+1}=
   (\ML-(1-\theta)\,\tau_{n+1}\,{\mathbb L}^n)\,\mathbf{u}^n
   +\Bigg(\sum_{j=1}^M\,\alpha^{n+1}_{ij}\,f^{n+1}_{ij}\Bigg)_{i=1}^M\,.
\end{equation}
The original Galerkin discretization is recovered for $\alpha_{ij} = 1$ while
the largest amount of artificial diffusion is introduced for $\alpha_{ij} = 0$.
The latter setting is appropriate in the neighborhood of steep fronts and large
gradients. The artificial diffusion can be removed in regions where the
solution is smooth and where non-positive off-diagonal entries of the stiffness
matrix do not pose any threat to non-negativity. The corrected fluxes depend on
the approximate solution in a nonlinear way but since the problem in here is
already nonlinear, we can treat both nonlinearities simultaneously.

It is convenient to write the nonlinear problem \eqref{eq1fct} in the form
\begin{eqnarray}
   \ML\,\overline{\mathbf{u}}
   &=&(\ML-(1-\theta)\,\tau_{n+1}\,{\mathbb L}^n)\,\mathbf{u}^n\,,
   \label{eq:step1}\\
   \ML\,\tilde{\mathbf{u}}&=&\ML\,\overline{\mathbf{u}}
   +\Bigg(\sum_{j=1}^M\,\alpha^{n+1}_{ij}\,f^{n+1}_{ij}\Bigg)_{i=1}^M\,,
   \label{eq:step2}\\
   (\ML+\theta\,\tau_{n+1}\,{\mathbb L}^{n+1})\,\mathbf{u}^{n+1}
   &=&\ML\,\tilde{\mathbf{u}}\,.\label{eq:step3}
\end{eqnarray}
According to Lemma~\ref{lem:time_step}, the steps \eqref{eq:step1} and
\eqref{eq:step3} are positivity preserving under the conditions
\eqref{eq:time_step}. To guarantee the positivity preservation of the second
step, the limiters $\alpha^{n+1}_{ij}$ have to be defined appropriately. We
will apply the Zalesak algorithm \cite{Zal79} which will be described next.

The solution of the nonlinear problem \eqref{eq:step1}--\eqref{eq:step3} is
computed by fixed-point iterations where the algebraic fluxes are calculated
using the previous iterate. Since the properties of the Zalesak algorithm do
not depend on the form of these fluxes, we will denote them simply by $f_{ij}$.
Then, the aim is to find limiters $\alpha_{ij}\in[0,1]$ such that the solution
$\tilde{\mathbf{u}}$ of
\begin{equation*}
   \ML\,\tilde{\mathbf{u}}=\ML\,\overline{\mathbf{u}}
   +\Bigg(\sum_{j=1}^M\,\alpha_{ij}\,f_{ij}\Bigg)_{i=1}^M
\end{equation*}
satisfies
\begin{equation}\label{eq:tcd_femfct_01}
   \overline u_i^{\mathrm{min}}\le\tilde u_i\le\overline u_i^{\mathrm{max}}\,,
   \qquad i=1,\dots,M\,,
\end{equation}
where
\begin{equation*}
   \overline u_i^{\mathrm{min}} = \min_{j\in{\mathcal N}_i\cup\{i\}} \overline u_j\,,
   \quad 
   \overline u_i^{\mathrm{max}} = \max_{j\in{\mathcal N}_i\cup\{i\}} \overline u_j\,,
   \qquad i=1,\dots,M\,,
\end{equation*}
and ${\mathcal N}_i$ is the index set of neighbour vertices to the vertex $x_i$
(note that two vertices of the triangulation ${\mathcal T}_h$ are called
neighbouring if they are contained in the same mesh cell). To preserve
conservativity, it is important that the limiters $\alpha_{ij}$ form a
symmetric matrix. The limiting process begins with cancelling all fluxes that
are diffusive in nature and tend to flatten the solution profiles,
cf.~\cite{Kuz12a}. The required modification is
\begin{equation}\label{eq:prelimiting}
   f_{ij}:=0\qquad\mbox{if}\,\,\,\,
   f_{ij}\,(\overline{u}_j-\overline{u}_i) > 0\,.
\end{equation}
The remaining fluxes are truly antidiffusive and the computation of 
$\alpha_{ij}$ involves the following steps:
\begin{itemize}
\item[1.]
Compute the sum of positive/negative antidiffusive fluxes into node $i$
\begin{equation}\label{eq:zalesak1}
   P_{i}^{+} = \sum_{j\in{\mathcal N}_i}\,\max \{0, {f}_{ij}\}\,,\qquad 
   P_{i}^{-} = \sum_{j\in{\mathcal N}_i}\,\min \{0, {f}_{ij}\}\,.
\end{equation}
\item[2.]
Compute the distance to a local extremum of the auxiliary solution 
$\overline{\mathbf{u}}$
\begin{equation}\label{eq:zalesak2}
   Q_i^+=m_i\,(\overline u_i^{\mathrm{max}}-\overline u_i) \,,\qquad
   Q_i^-=m_i\,(\overline u_i^{\mathrm{min}}-\overline u_i) \,.
\end{equation}
\item[3.] 
Compute the nodal correction factors for the net increment to node $i$
\begin{equation}\label{eq:zalesak3}
   R_i^+=\min\left\lbrace 1,\dfrac{Q_i^+}{ P_i^+}\right\rbrace,\qquad
   R_i^-=\min\left\lbrace 1,\dfrac{Q_i^-}{ P_i^-}\right\rbrace.
\end{equation}
If a denominator is zero, set the respective value of $R_i^+$ or $R_i^-$ equal 
to $1$.
\item[4.]
Check the sign of the antidiffusive flux and define the correction factor by
\begin{equation}\label{eq:zalesak4}
   \alpha_{ij} = \left\{
   \begin{array}{ll}
      \min\{R_{i} ^{+}, R_{j} ^{-} \}\quad&\mbox{if}\quad\,f_{ij}>0\,,\\[1mm]
      1&\mbox{if}\quad\,f_{ij} = 0\,, \\[1mm]
      \min\{R_{i} ^{-}, R_{j} ^{+} \}\quad&\mbox{if}\quad\,f_{ij}<0\,.
   \end{array}\right.
\end{equation}
\end{itemize}
It can be easily verified (see, e.g., \cite{BJK23}) that this algorithm leads 
to the property \eqref{eq:tcd_femfct_01}.

Now we are in a position to prove the solvability and positivity preservation
for the above FCT discretization.

\begin{theorem}\label{thm:fct_existence}
Consider any $n\in\{0,\dots,N-1\}$ and let
$\mathbf{u}^n,\mathbf{c}^n,\mathbf{p}^n\in{\mathbb R}^M$ satisfy
$\mathbf{u}^n\ge0$, $1\ge\mathbf{c}^n\ge0$, $\mathbf{p}^n\ge0$. Let the time 
step $\tau_{n+1}$ satisfy the conditions \eqref{eq:time_step3}.
Then there exist vectors $\mathbf{u}^{n+1},\mathbf{c}^{n+1}, 
\mathbf{p}^{n+1}\in{\mathbb R}^M$ satisfying \eqref{eq1fct}, \eqref{eq2vf}, 
\eqref{eq3vf} where the fluxes $f^{n+1}_{ij}$ are given by \eqref{eq:fluxes}
and \eqref{eq:prelimiting} and the limiters $\alpha^{n+1}_{ij}$ are computed
using the Zalesak algorithm \eqref{eq:zalesak1}--\eqref{eq:zalesak4} from the
fluxes $f^{n+1}_{ij}$. Moreover, these vectors satisfy $\mathbf{u}^{n+1}\ge0$,
$1\ge\mathbf{c}^{n+1}\ge0$, and $\mathbf{p}^{n+1}\ge0$.
\end{theorem}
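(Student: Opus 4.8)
The plan is to reproduce the architecture of the proof of Theorem~\ref{thm:low_order_existence} and again obtain existence from Brouwer's theorem in the form of Lemma~\ref{lem:brouwer}. As there, I would first replace \eqref{eq2vf} by the regularized relation \eqref{eq2vfmod}, so that $\mathbf p\mapsto\mathbf C(\mathbf p)$ maps globally into $[0,1]^M$, and recover \eqref{eq2vf} only at the very end once $\mathbf p^{n+1}\ge0$ is shown. The operator $P:\mathbb R^{2M}\to\mathbb R^{2M}$ would coincide with \eqref{eq:P} except that each component $S_i$ is augmented by subtracting the limited flux correction, i.e.\ one works with
\[
   S_i^{\mathrm F}(\mathbf u,\mathbf p)
   =S_i(\mathbf u,\mathbf p)-\sum_{j=1}^M\,\alpha_{ij}(\mathbf u,\mathbf p)\,f_{ij}(\mathbf u,\mathbf p)\,,
\]
where the fluxes $f_{ij}$ are obtained from \eqref{eq:fluxes} and \eqref{eq:prelimiting} upon inserting the arguments $\mathbf u$, $\mathbf c=\mathbf C(\mathbf p)$ and the matrix $\mathbb D^{n+1}=(D_{ij}(\mathbf u,\mathbf p))$ from \eqref{eq:Dij}, and the $\alpha_{ij}$ are produced by the Zalesak algorithm \eqref{eq:zalesak1}--\eqref{eq:zalesak4}; the $\mathbf p$-components of $P$ are left exactly as in \eqref{eq:P}. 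A short computation then shows that $P\mathbf U=0$ together with $\mathbf c^{n+1}=\mathbf C(\mathbf p^{n+1})$ is equivalent to \eqref{eq1fct}, \eqref{eq2vfmod}, \eqref{eq3vf}.

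Two properties of this operator must be verified: continuity and the coercivity bound required by Lemma~\ref{lem:brouwer}. For continuity the only new ingredient relative to Theorem~\ref{thm:low_order_existence} is the map $(\mathbf u,\mathbf p)\mapsto\sum_j\alpha_{ij}f_{ij}$. The fluxes depend continuously on $(\mathbf u,\mathbf p)$, since the maxima defining $\mathbb D^{n+1}$ and the prelimiting \eqref{eq:prelimiting} are continuous operations; the limiters involve the quotients $Q_i^\pm/P_i^\pm$ whose only possible discontinuities occur when a denominator $P_i^\pm$ vanishes, but in that case no antidiffusive flux of the corresponding sign emanates from node $i$, so the potentially discontinuous factor is never multiplied by a nonzero flux and the product $\sum_j\alpha_{ij}f_{ij}$ remains continuous (this is exactly the mechanism analysed in \cite{john2021solvability,john2021existence}).

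The decisive observation for coercivity is that the limited correction is \emph{uniformly bounded} in the unknowns. Indeed, $\overline{\mathbf u}$ from \eqref{eq:step1} and hence $Q_i^\pm=m_i(\overline u_i^{\mathrm{max/min}}-\overline u_i)$ from \eqref{eq:zalesak2} depend only on the known data $\mathbf u^n$, while the property \eqref{eq:tcd_femfct_01} guaranteed by the Zalesak algorithm is precisely $Q_i^-\le\sum_j\alpha_{ij}f_{ij}\le Q_i^+$. Therefore $\big|\sum_j\alpha_{ij}f_{ij}\big|\le\beta_i:=\max\{Q_i^+,-Q_i^-\}$ independently of $(\mathbf u,\mathbf p)$, so the new term contributes at most linearly,
\[
   \Big|\sum_{i=1}^M u_i\sum_{j=1}^M\alpha_{ij}f_{ij}\Big|
   \le\Big(\sum_{i=1}^M\beta_i^2\Big)^{1/2}\|\mathbf u\|_M\,.
\]
Consequently the cubic lower bound \eqref{eq:Si_est} survives with $C_3$ replaced by $C_3+(\sum_i\beta_i^2)^{1/2}$, the $\mathbf p$-estimate based on \eqref{eq:P_i_est} is untouched, and the Young-inequality step yields $(P\mathbf U,\mathbf U)\ge\tfrac12\|\mathbf U\|^2-\text{const}$ exactly as before, so Lemma~\ref{lem:brouwer} supplies a solution $\mathbf U=(\mathbf u^{n+1},\mathbf p^{n+1})$.

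For positivity I would use the three-step form \eqref{eq:step1}--\eqref{eq:step3}. Since $\mathbf c^{n+1}=\mathbf C(\mathbf p^{n+1})\in[0,1]^M$ by \eqref{eq2vfmod}, the first condition in \eqref{eq:time_step3} gives $\ML-(1-\theta)\,\tau_{n+1}\,\mathbb L^n\ge0$, so \eqref{eq:step1} yields $\overline{\mathbf u}\ge0$; the Zalesak property \eqref{eq:tcd_femfct_01} then forces $\tilde u_i\ge\overline u_i^{\mathrm{min}}\ge0$; and by Lemma~\ref{lem:time_step} combined with Remark~\ref{rem:pos_preserv} the matrix in \eqref{eq:step3} is an M-matrix, whence $\mathbf u^{n+1}\ge0$. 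Finally, the equivalence of \eqref{eq3vf} and \eqref{eq3vsf} gives $\mathbf p^{n+1}\ge0$, recovering \eqref{eq2vf}. The main obstacle is the nonlinear, nonsmooth coupling through the limiters; it is defused precisely by the Zalesak bound $Q_i^-\le\sum_j\alpha_{ij}f_{ij}\le Q_i^+$, which keeps the antidiffusive correction bounded independently of the solution so that the cubic reaction term continues to dominate in the coercivity estimate.
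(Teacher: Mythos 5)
Your proposal is correct and follows the paper's overall architecture exactly: regularize \eqref{eq2vf} to \eqref{eq2vfmod}, augment the operator $P$ from \eqref{eq:P} by the limited antidiffusive term, verify continuity of $\alpha_{ij}f_{ij}$ by the same case distinction (denominators bounded away from zero where the flux is nonzero, the bound $|\alpha_{ij}f_{ij}|\le|f_{ij}|$ where it vanishes), apply Lemma~\ref{lem:brouwer}, and obtain positivity from the three-step decomposition \eqref{eq:step1}--\eqref{eq:step3}. The one place where you genuinely diverge is the coercivity estimate. The paper rewrites $S_i$ using $\widetilde\alpha_{ij}$ (the limiter multiplied by the prelimiting indicator) so that the implicit part of the correction combines with the low-order terms into a quadratic form $-\tfrac12\sum_{i,j}B_{ij}(u_i-u_j)^2\ge0$ with $B_{ij}=(1-\widetilde\alpha_{ij})(-m_{ij}+\theta\tau_{n+1}D_{ij})$ symmetric with non-positive off-diagonal entries, and absorbs the explicit part into an enlarged constant $C_3$ via the vector $\mathbf{g}$. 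You instead observe that $\overline{\mathbf{u}}$, and hence $Q_i^\pm$, depend only on data from time level $t_n$, so the Zalesak property \eqref{eq:tcd_femfct_01} gives the a priori bound $Q_i^-\le\sum_j\alpha_{ij}f_{ij}\le Q_i^+$ uniformly in $(\mathbf{u},\mathbf{p})$, and the whole correction contributes only a linear term to $(P\mathbf{U},\mathbf{U})$. Both arguments are valid and reach \eqref{eq:Si_est} with a modified $C_3$; yours is shorter and makes transparent that the correction cannot destroy the cubic dominance coming from the reaction term, while the paper's decomposition exposes the algebraic structure (the FCT scheme as an interpolation between the low- and high-order methods governed by $1-\widetilde\alpha_{ij}$) without invoking \eqref{eq:tcd_femfct_01} at this stage.
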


\begin{proof}
The proof follows the lines of that of Theorem~\ref{thm:low_order_existence}.
Thus, we again start with replacing \eqref{eq2vf} by \eqref{eq2vfmod}. We again
define $C_i$, $P_i$, $A_{ij}$, and $D_{ij}$ by \eqref{eq:Ci}, \eqref{eq:Pi},
\eqref{eq:Aij} and \eqref{eq:Dij}, respectively, whereas $S_i$ are now defined
by
\begin{align*}
   &S_i(\mathbf{u},\mathbf{p})=m_i\,u_i
   +\theta\,\tau_{n+1}\,\sum_{j=1}^M\,(A_{ij}(\mathbf{u},\mathbf{p})
   +D_{ij}(\mathbf{u},\mathbf{p}))\,u_j-\sum_{j=1}^M\,
   \alpha_{ij}(\mathbf{u},\mathbf{p})\,\tilde f_{ij}(\mathbf{u},\mathbf{p})\\
   &\hspace*{85mm}
   -[(\ML-(1-\theta)\,\tau_{n+1}\,{\mathbb L}^n)\,\mathbf{u}^n]_i\,,
\end{align*}
where $\alpha_{ij}(\mathbf{u},\mathbf{p})$ are defined by the Zalesak algorithm
\eqref{eq:zalesak1}--\eqref{eq:zalesak4} for the algebraic fluxes 
$\tilde f_{ij}(\mathbf{u},\mathbf{p})$ defined by
\begin{equation*}
   \tilde f_{ij}(\mathbf{u},\mathbf{p}) = \left\{
   \begin{array}{ll}
      f_{ij}(\mathbf{u},\mathbf{p})\quad&\mbox{if}\quad\,
      f_{ij}(\mathbf{u},\mathbf{p})(\overline{u}_j-\overline{u}_i)\le0\,,\\[1mm]
      0&\mbox{if}\quad\,
      f_{ij}(\mathbf{u},\mathbf{p})(\overline{u}_j-\overline{u}_i)>0\,,
   \end{array}\right.
\end{equation*}
with $\overline{\mathbf{u}}$ from \eqref{eq:step1} and
\begin{equation*}
   f_{ij}(\mathbf{u},\mathbf{p})
   =\big(-m_{ij}+\theta\,\tau_{n+1}\,D_{ij}(\mathbf{u},\mathbf{p})\big)(u_j-u_i)
   +\big(m_{ij}+(1-\theta)\,\tau_{n+1}\,d^n_{ij}\big)(u^n_j-u^n_i)\,.
\end{equation*}
Then, defining the operator $P:{\mathbb R}^{2M}\to{\mathbb R}^{2M}$ by
\eqref{eq:P}, the vectors $\mathbf{u}^{n+1}$, $\mathbf{c}^{n+1}$,
$\mathbf{p}^{n+1}$ are a solution of \eqref{eq1fct}, \eqref{eq2vfmod},
\eqref{eq3vf} if and only if $\mathbf{U}=(\mathbf{u}^{n+1},\mathbf{p}^{n+1})$
satisfies $P\,\mathbf{U}=0$ and
$\mathbf{c}^{n+1}=\mathbf{C}(\mathbf{p}^{n+1})$.

The solvability of the equation $P\,\mathbf{U}=0$ will be again proved using
Lemma~\ref{lem:brouwer}. To show the continuity of the operator $P$ at any point
$\widetilde{\mathbf{U}}\equiv(\widetilde{\mathbf{u}},\widetilde{\mathbf{p}})
\in{\mathbb R}^{2M}$, it suffices to consider the terms
$\alpha_{ij}(\mathbf{u},\mathbf{p})\,\tilde f_{ij}(\mathbf{u},\mathbf{p})$ 
since the remaining terms in the definition of $P$ are clearly continuous.
Moreover, $f_{ij}$ and hence also $\tilde f_{ij}$ are continuous. Thus, if
$\tilde f_{ij}(\widetilde{\mathbf{U}})\neq0$, then the denominators in the 
formulas defining $\alpha_{ij}(\mathbf{U})$ with 
$\mathbf{U}=(\mathbf{u},\mathbf{p})$ do not vanish in a neighborhood of
$\widetilde{\mathbf{U}}$ and hence $\alpha_{ij}$ is continuous at
$\widetilde{\mathbf{U}}$. Consequently, also $\alpha_{ij}\,\tilde f_{ij}$ is
continuous at $\widetilde{\mathbf{U}}$. If 
$\tilde f_{ij}(\widetilde{\mathbf{U}})=0$, then
\begin{equation*}
   |(\alpha_{ij}\,\tilde f_{ij})(\mathbf{U})
   -(\alpha_{ij}\,\tilde f_{ij})(\widetilde{\mathbf{U}})|
   =|(\alpha_{ij}\,\tilde f_{ij})(\mathbf{U})|
   \le|\tilde f_{ij}(\mathbf{U})|
   =|\tilde f_{ij}(\mathbf{U})-\tilde f_{ij}(\widetilde{\mathbf{U}})|\,,
\end{equation*}
which shows that $\alpha_{ij}\,\tilde f_{ij}$ is again continuous at 
$\widetilde{\mathbf{U}}$.

To estimate $(P\,\mathbf{U},\mathbf{U})$ from below, let us denote
\begin{equation*}
   \widetilde\alpha_{ij}(\mathbf{u},\mathbf{p}) = \left\{
   \begin{array}{ll}
      \alpha_{ij}(\mathbf{u},\mathbf{p})\quad&\mbox{if}\quad\,
      f_{ij}(\mathbf{u},\mathbf{p})(\overline{u}_j-\overline{u}_i)\le0\,,\\[1mm]
      0&\mbox{if}\quad\,
      f_{ij}(\mathbf{u},\mathbf{p})(\overline{u}_j-\overline{u}_i)>0\,.
   \end{array}\right.
\end{equation*}
Then $\widetilde\alpha_{ij}$ again form a symmetric matrix and 
$\alpha_{ij}\,\tilde f_{ij}=\widetilde\alpha_{ij}\,f_{ij}$. Therefore, 
$S_i(\mathbf{u},\mathbf{p})$ can be written in the form
\begin{align*}
   S_i(\mathbf{u},\mathbf{p})=&\sum_{j=1}^M\,m_{ij}\,u_j
   +\theta\,\tau_{n+1}\,\sum_{j=1}^M\,A_{ij}(\mathbf{u},\mathbf{p})\,u_j\\
   +&\sum_{j=1}^M\,\big(1-\widetilde\alpha_{ij}(\mathbf{u},\mathbf{p})\big)
    \big(-m_{ij}+\theta\,\tau_{n+1}\,D_{ij}(\mathbf{u},\mathbf{p})\big)
    (u_j-u_i)\\
   +&\sum_{j=1}^M\,\big(1-\widetilde\alpha_{ij}(\mathbf{u},\mathbf{p})\big)
   \big(m_{ij}+(1-\theta)\,\tau_{n+1}\,d^n_{ij}\big)(u^n_j-u^n_i)\\
   -&[({\mathbb M}-(1-\theta)\,\tau_{n+1}\,{\mathbb A}^n)\,\mathbf{u}^n]_i\,.
\end{align*}
Denoting $B_{ij}=\big(1-\widetilde\alpha_{ij}(\mathbf{u},\mathbf{p})\big)
\big(-m_{ij}+\theta\,\tau_{n+1}\,D_{ij}(\mathbf{u},\mathbf{p})\big)$, one has
\begin{equation*}
   \sum_{i,j=1}^M\,u_i\,\big(1-\widetilde\alpha_{ij}(\mathbf{u},\mathbf{p})\big)
   \big(-m_{ij}+\theta\,\tau_{n+1}\,D_{ij}(\mathbf{u},\mathbf{p})\big)
   (u_j-u_i)=-\frac12\,\sum_{i,j=1}^M\,B_{ij}\,(u_i-u_j)^2\ge0\,,
\end{equation*}
since the matrix $(B_{ij})_{i,j=1}^M$ is symmetric and has non-positive
off-diagonal entries. Therefore, one again obtains \eqref{eq:Si_est} where the
constants $C_1$, $C_2$ are the same as in the proof of
Theorem~\ref{thm:low_order_existence} and
\begin{equation*}
   C_3=\|\mathbf{g}\|_M^{}+
   \|({\mathbb M}-(1-\theta)\,\tau_{n+1}\,{\mathbb A}^n)\,\mathbf{u}^n\|_M^{}\,,
\end{equation*}
where
\begin{equation*}
   g_i=\sum_{j=1}^M\,|m_{ij}+(1-\theta)\,\tau_{n+1}\,d^n_{ij}|\,|u^n_j-u^n_i|\,,
   \qquad i=1,\dots,M\,.
\end{equation*}
Thus, in the same way as in the proof of Theorem~\ref{thm:low_order_existence},
one concludes that there exists a solution $\mathbf{U}$ of the equation
$P\,\mathbf{U}=0$ and hence also a solution $\mathbf{u}^{n+1}$,
$\mathbf{c}^{n+1}$, $\mathbf{p}^{n+1}$ of \eqref{eq1fct}, \eqref{eq2vfmod}, and
\eqref{eq3vf}.

To prove the positivity preservation, we write \eqref{eq1fct} in the form
\eqref{eq:step1}--\eqref{eq:step3}. Since
$\ML-(1-\theta)\,\tau_{n+1}\,{\mathbb L}^n\ge0$ according to 
Lemma~\ref{lem:time_step}, one has $\overline{\mathbf{u}}\ge0$. Applying 
\eqref{eq:tcd_femfct_01}, one gets $\tilde{\mathbf{u}}\ge0$. Since
$0\le\mathbf{c}^{n+1}\le1$ due to \eqref{eq2vfmod}, it follows from
Lemma~\ref{lem:time_step} and Remark~\ref{rem:pos_preserv} that the matrix
$\ML+\theta\,\tau_{n+1}\,{\mathbb L}^{n+1}$ is an M-matrix. Consequently,
$\mathbf{u}^{n+1}\ge0$ in view of \eqref{eq:step3}. Since \eqref{eq3vf} is
equivalent to \eqref{eq3vsf}, one also has $\mathbf{p}^{n+1}\ge0$ and hence
\eqref{eq2vf} is satisfied as well.
\end{proof}

\section{Iterative solution of the FCT discretization}
\label{sec:iter_sol}

To compute a solution of the nonlinear problem \eqref{eq1fct}, \eqref{eq2vf},
\eqref{eq3vf} at time $t_{n+1}$, we will proceed similarly as for the Galerkin
discretization in Section~\ref{sec:galerkin}. Thus, given approximations
$\mathbf{u}^{n+1}_{k-1}$, $\mathbf{c}^{n+1}_{k-1}$, $\mathbf{p}^{n+1}_{k-1}$
(with some $k>0$) of $\mathbf{u}^{n+1}$, $\mathbf{c}^{n+1}$,
$\mathbf{p}^{n+1}$, respectively, we compute $\mathbf{c}^{n+1}_k$, 
$\mathbf{p}^{n+1}_k$ using \eqref{eq:cik}, \eqref{eq:pik}. The iterate 
$\mathbf{u}^{n+1}_k$ is computed by solving the linear system
\begin{equation}\label{eq:fctk}
   (\ML+\theta\,\tau_{n+1}\,{\mathbb L}^{n+1}_{k-1})\,\mathbf{u}^{n+1}_k
   =(\ML-(1-\theta)\,\tau_{n+1}\,{\mathbb L}^n)\,\mathbf{u}^n
   +\Bigg(\sum_{j=1}^M\,\alpha^{n+1}_{ij,k-1}\,f^{n+1}_{ij,k-1}\Bigg)_{i=1}^M\,,
\end{equation}
where ${\mathbb L}^{n+1}_{k-1}={\mathbb A}^{n+1}_{k-1}+{\mathbb D}^{n+1}_{k-1}$
with the matrix ${\mathbb A}^{n+1}_{k-1}$ defined in \eqref{eq:akm1} and the 
artificial diffusion matrix ${\mathbb D}^{n+1}_{k-1}$ defined by
\begin{equation}\label{eq:dijk}
  d^{n+1}_{ij,k-1}
  =-\max\{a^{n+1}_{ij,k-1},0,a^{n+1}_{ji,k-1}\}\quad
  \mbox{for} \ i\ne j\,,\qquad\quad
  d^{n+1}_{ii,k-1}=-\sum_{j=1,j \neq i}^M d^{n+1}_{ij,k-1}\,.
\end{equation}
The algebraic fluxes $f^{n+1}_{ij,k-1}$ are given by
\begin{equation}\label{eq:fluxesk}
   f^{n+1}_{ij,k-1}=\big(-m_{ij}+\theta\,\tau_{n+1}\,d^{n+1}_{ij,k-1}\big)
   (u_{j,k-1}^{n+1}-u_{i,k-1}^{n+1})
   +\big(m_{ij}+(1-\theta)\,\tau_{n+1}\,d^n_{ij}\big)(u^n_j-u^n_i)
\end{equation}
and we again consider the prelimiting step
\begin{equation}\label{eq:prelimitingk}
   f^{n+1}_{ij,k-1}:=0\qquad\mbox{if}\,\,\,\,
   f^{n+1}_{ij,k-1}\,(\overline{u}_j-\overline{u}_i) > 0\,,
\end{equation}
with $\overline{\mathbf{u}}$ from \eqref{eq:step1}. The limiters
$\alpha^{n+1}_{ij,k-1}$ are computed from the fluxes $f^{n+1}_{ij,k-1}$ using 
the Zalesak algorithm \eqref{eq:zalesak1}--\eqref{eq:zalesak4}. The following
result shows that, under suitable time step restrictions, the above-defined
iterates are uniquely determined and preserve non-negativity. This is important
since, in practice, the fixed-point iterations are usually terminated when a
stopping criterion is met, i.e., typically before reaching the solution of the 
nonlinear problem \eqref{eq1fct}, \eqref{eq2vf}, \eqref{eq3vf}.

\begin{theorem}\label{thm:iterates}
Consider any $n\in\{0,\dots,N-1\}$ and $k\in{\mathbb N}$ and let
$\mathbf{u}^n,\mathbf{c}^n,\mathbf{p}^n\in{\mathbb R}^M$ and 
$\mathbf{u}^{n+1}_{k-1},\mathbf{p}^{n+1}_{k-1}\in{\mathbb R}^M$ be arbitrary 
vectors satisfying $\mathbf{u}^n\ge0$, $1\ge\mathbf{c}^n\ge0$,
$\mathbf{p}^n\ge0$, and $\mathbf{u}^{n+1}_{k-1}\ge0$,
$\mathbf{p}^{n+1}_{k-1}\ge0$. Let $\mathbf{c}^{n+1}_k$, $\mathbf{p}^{n+1}_k$ be
given by \eqref{eq:cik}, \eqref{eq:pik}. Let the time step $\tau_{n+1}$ satisfy
the conditions
\begin{equation}\label{eq:time_step4}
   (1-\theta)\,\tau_{n+1}\,l^n_{ii}\le m_i\,,\qquad
   \theta\,\tau_{n+1}
   \Big(\mu\,\big(1 - \pi_h\mathbf{u}^{n+1}_{k-1},\phi_i\big)
   +\chi\,\big(\nabla(\pi_h\mathbf{c}^{n+1}_k),\nabla\phi_i\big)\Big)
   < m_i\,,\qquad i=1,\dots,M\,.
\end{equation}
Then the linear system \eqref{eq:fctk} has a unique solution
$\mathbf{u}^{n+1}_k$ and one has $\mathbf{u}^{n+1}_k\ge0$, 
$1\ge\mathbf{c}^{n+1}_k\ge0$, and $\mathbf{p}^{n+1}_k\ge0$.
\end{theorem}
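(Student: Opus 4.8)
The key structural observation is that, unlike Theorems~\ref{thm:low_order_existence} and \ref{thm:fct_existence}, the problem here is \emph{linear}: the matrix $\mathbb{L}^{n+1}_{k-1}$, the fluxes $f^{n+1}_{ij,k-1}$, and the limiters $\alpha^{n+1}_{ij,k-1}$ all depend only on the known previous iterate $\mathbf{u}^{n+1}_{k-1}$ and on $\mathbf{c}^{n+1}_k$, which is itself explicitly determined by $\mathbf{p}^{n+1}_{k-1}$ via \eqref{eq:cik}. Hence no Brouwer-type fixed-point argument is needed; the plan is to establish solvability by showing that the system matrix $\mathbb{B}\defs\ML+\theta\,\tau_{n+1}\,\mathbb{L}^{n+1}_{k-1}$ is an M-matrix, and then to derive the three non-negativity claims separately.

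I would begin with the bounds on $\mathbf{c}^{n+1}_k$. From \eqref{eq:cik}, $c^{n+1}_{i,k}=c_i^n\,\mathrm{e}^{-\tau_{n+1}(p^{n+1}_{i,k-1}+p^n_i)/2}$; since $c_i^n\in[0,1]$ and $p^{n+1}_{i,k-1},p^n_i\ge0$ by hypothesis, the exponential factor lies in $(0,1]$, so $0\le c^{n+1}_{i,k}\le c_i^n\le 1$. For $\mathbf{p}^{n+1}_k\ge0$ I would reuse the device from the earlier proofs: the explicit formula \eqref{eq:pik} is the exact integration of the iterative analogue of \eqref{eq3vsf}, in which the time-linear interpolants of $u$ and $c$ (built from $\mathbf{u}^n,\mathbf{u}^{n+1}_{k-1}$ and $\mathbf{c}^n,\mathbf{c}^{n+1}_k$) are non-negative on $[t_n,t_{n+1}]$; together with $\mathrm{e}^{-\tau_{n+1}/\epsilon}\,p^n_i\ge0$, this integral representation gives $p^{n+1}_{i,k}\ge0$.

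The core of the argument is the M-matrix property of $\mathbb{B}$, which I would prove by adapting Lemma~\ref{lem:time_step}. As in \eqref{eq:dijk}, $\mathbb{D}^{n+1}_{k-1}$ has zero row sums and renders the off-diagonal entries of $\mathbb{L}^{n+1}_{k-1}=\mathbb{A}^{n+1}_{k-1}+\mathbb{D}^{n+1}_{k-1}$ non-positive, so $\mathbb{L}^{n+1}_{k-1}$ is a Z-matrix and $\mathbb{B}$ has non-positive off-diagonal entries. Using $\sum_j\phi_j=1$ and $(1,\phi_i)=m_i$, the row sums satisfy
\begin{equation*}
   \sum_{j=1}^M b_{ij}
   =m_i-\theta\,\tau_{n+1}\Big(\mu\,\big(1-\pi_h\mathbf{u}^{n+1}_{k-1},\phi_i\big)
   +\chi\,\big(\nabla(\pi_h\mathbf{c}^{n+1}_k),\nabla\phi_i\big)\Big),
\end{equation*}
which is strictly positive precisely by the second condition in \eqref{eq:time_step4}. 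Positive row sums together with non-positive off-diagonals force $b_{ii}>\sum_{j\ne i}|b_{ij}|$, so $\mathbb{B}$ is strictly diagonally dominant and hence non-singular; being additionally of non-negative type, it is an M-matrix (as in Lemma~\ref{lem:time_step}, via \cite[Corollary~3.13]{BJK23}). This yields simultaneously the uniqueness of $\mathbf{u}^{n+1}_k$ and $\mathbb{B}^{-1}\ge0$.

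Finally, for $\mathbf{u}^{n+1}_k\ge0$ I would show that the right-hand side of \eqref{eq:fctk} is non-negative and invoke $\mathbb{B}^{-1}\ge0$. Rewriting that right-hand side through the splitting \eqref{eq:step1}--\eqref{eq:step2}: the first condition in \eqref{eq:time_step4} and Lemma~\ref{lem:time_step} give $\ML-(1-\theta)\,\tau_{n+1}\,\mathbb{L}^n\ge0$, hence $\overline{\mathbf{u}}\ge0$; the Zalesak limiter guarantees the local bounds \eqref{eq:tcd_femfct_01}, so $\tilde u_i\ge\overline u_i^{\mathrm{min}}\ge0$ and thus $\ML\tilde{\mathbf{u}}\ge0$. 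Since the right-hand side of \eqref{eq:fctk} equals $\ML\tilde{\mathbf{u}}$, the identity $\mathbf{u}^{n+1}_k=\mathbb{B}^{-1}\ML\tilde{\mathbf{u}}$ together with $\mathbb{B}^{-1}\ge0$ gives $\mathbf{u}^{n+1}_k\ge0$. I do not expect a genuine analytical obstacle here: the proof is essentially a reassembly of previously established facts, and because the time-step condition \eqref{eq:time_step4} is stated directly in terms of the known quantity $\mathbf{c}^{n+1}_k$, even Remark~\ref{rem:pos_preserv} becomes unnecessary; the only point to get right is the careful bookkeeping in the row-sum computation of $\mathbb{B}$.
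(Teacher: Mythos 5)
Your proposal is correct and follows essentially the same route as the paper's proof: the bounds on $\mathbf{c}^{n+1}_k$ read off from \eqref{eq:cik}, non-negativity of $\mathbf{p}^{n+1}_k$ via the integral form \eqref{eq3vsf}, non-negativity of $\overline{\mathbf{u}}$ and $\tilde{\mathbf{u}}$ through the splitting \eqref{eq:step1}--\eqref{eq:step2} and the Zalesak bounds \eqref{eq:tcd_femfct_01}, and unique solvability plus $\mathbf{u}^{n+1}_k\ge0$ from the M-matrix property of $\ML+\theta\,\tau_{n+1}\,\mathbb{L}^{n+1}_{k-1}$ established as in Lemma~\ref{lem:time_step}. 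Your explicit row-sum computation and the observation that \eqref{eq:time_step4} makes Remark~\ref{rem:pos_preserv} unnecessary here are both consistent with what the paper does implicitly.
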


\begin{proof}
The formula \eqref{eq:cik} immediately implies that
$1\ge\mathbf{c}^{n+1}_k\ge0$. Since \eqref{eq:pik} can be written in the form
\eqref{eq3vsf} with $u_{h,\tau}$ and $c_{h,\tau}$ defined using
$\mathbf{u}^{n+1}_{k-1}$ and $\mathbf{c}^{n+1}_k$, respectively, at time
$t_{n+1}$, one has $\mathbf{p}^{n+1}_k\ge0$. Since
$\ML-(1-\theta)\,\tau_{n+1}\,{\mathbb L}^n\ge0$ according to
Lemma~\ref{lem:time_step}, the solution of \eqref{eq:step1} satisfies
$\overline{\mathbf{u}}\ge0$. Then \eqref{eq:tcd_femfct_01} implies
$\tilde{\mathbf{u}}\ge0$ for the solution of
\begin{equation*}
   \ML\,\tilde{\mathbf{u}}=\ML\,\overline{\mathbf{u}}
   +\Bigg(\sum_{j=1}^M\,\alpha^{n+1}_{ij,k-1}\,f^{n+1}_{ij,k-1}\Bigg)_{i=1}^M\,.
\end{equation*}
Finally, we use the fact that $\mathbf{u}^{n+1}_k$ satisfies
\begin{equation}\label{eq:step3k}
   (\ML+\theta\,\tau_{n+1}\,{\mathbb L}^{n+1}_{k-1})\,\mathbf{u}^{n+1}_k
   =\ML\,\tilde{\mathbf{u}}\,.
\end{equation}
It follows from the proof of Lemma~\ref{lem:time_step} that, under the second
condition in \eqref{eq:time_step4}, the matrix
$(\ML+\theta\,\tau_{n+1}\,{\mathbb L}^{n+1}_{k-1})$ is an M-matrix and hence
$\mathbf{u}^{n+1}_k$ is uniquely determined and satisfies
$\mathbf{u}^{n+1}_k\ge0$.
\end{proof}

\begin{remark}
From the physical point of view, the quantities $u$, $c$, and $p$ should be not
only non-negative but also bounded by $1$ from above. We have proved that this
is the case for the approximations of $c$. Moreover, if this would be true 
also for the approximations of $u$, the integral form \eqref{eq3vd} would
provide this property also for the approximations of $p$. Unfortunately, a
proof of the upper bound for the approximations of $u$ is not available and
numerical results suggest that this bound can be violated. Note that a standard
proof of upper bounds for FCT discretizations relies on the decomposition
\eqref{eq:step1}--\eqref{eq:step3}. Then, in particular, one would need that
the solution of \eqref{eq:step1} satisfies $\overline{\mathbf{u}}\le1$ if
${\mathbf{u}}^n\le1$. Choosing ${\mathbf{u}}^n={\mathbf 1}$ (a vector with all
components equal to~$1$), this requirement implies that 
$(1-\theta)\,{\mathbb A}^n\,{\mathbf 1}
=(1-\theta)\,{\mathbb L}^n\,{\mathbf 1}\ge0$, i.e., the row sums of the matrix
$(1-\theta)\,{\mathbb A}^n$ have to be non-negative. Similarly, to derive an
upper bound from \eqref{eq:step3k}, one would need that $\theta\,{\mathbb
A}^{n+1}_{k-1}\,{\mathbf 1}\ge0$. It is clear that the validity of these row
sum conditions cannot be expected.
\end{remark}

\begin{remark}
The second condition on $\tau_{n+1}$ in \eqref{eq:time_step4} depends on
${\mathbf{c}}^{n+1}_k$ which itself depends on $\tau_{n+1}$. Consequently, in
general, one has to proceed iteratively to find $\tau_{n+1}$ which satisfies
\eqref{eq:time_step4}. To avoid this and also the dependence of $\tau_{n+1}$ on
the fixed-point iteration index $k$, it is possible to replace
\eqref{eq:time_step4} by \eqref{eq:time_step3},
cf.~Remark~\ref{rem:pos_preserv}.
\end{remark}

We summarize the procedure for obtaining a high-resolution positivity
preserving scheme for solving \eqref{eq1}--\eqref{eq5} in
Algorithm~\ref{alg:nonlin_femfct}.

\begin{algorithm}[h!]\caption{Iterative scheme for computing an approximation
of the solution to the nonlinear FCT discretization.}\label{alg:nonlin_femfct}
\begin{algorithmic}[1]
   \STATE{Choose a tolerance $\mathrm{Tol}>0$ and a damping factor
          $\beta\in(0,1]$.}
   \STATE{Compute the initial values $\mathbf{c}^0$, $\mathbf{p}^0$, and
          $\mathbf{u}^0$ by \eqref{eq:init_cond}.}
   \STATE{Compute the mass matrix $\mathbb M$ and the lumped mass matrix $\ML$.}
   \FOR{$n=0,1,\ldots,N-1$}
      \STATE{Compute the stiffness matrix ${\mathbb A}^n$ and the 
             artificial diffusion matrix ${\mathbb D}^n$ and set 
             ${\mathbb L}^n={\mathbb A}^n+{\mathbb D}^n$.}
      \STATE{Choose $\tau_{n+1}$ satisfying \eqref{eq:time_step3}.}
      \STATE{Compute the intermediate solution $\overline{\mathbf{u}}$ from
                 \eqref{eq:step1}.} 
      \STATE{Set $\mathbf{c}^{n+1}_0=\mathbf{c}^n$, 
               $\mathbf{p}^{n+1}_0=\mathbf{p}^n$, and 
                                  $\mathbf{u}^{n+1}_0=\mathbf{u}^n$.}
      \FOR{$k=1,2,\ldots$}
         \STATE{Compute $\mathbf{c}^{n+1}_k$ from \eqref{eq:cik} using
                $\mathbf{c}^{n}, \mathbf{p}^{n}$ and $\mathbf{p}_{k-1}^{n+1}$.}
         \STATE{Compute $\mathbf{p}^{n+1}_k$ from \eqref{eq:pik} using 
                $\mathbf{p}^n, \mathbf{c}^n, \mathbf{u}^n $, 
                $\mathbf{c}^{n+1}_k$, and $\mathbf{u}_{k-1}^{n+1}$.}
         \STATE{Compute the stiffness ${\mathbb A}^{n+1}_{k-1}$ from 
                  \eqref{eq:akm1} using $\mathbf{c}^{n+1}_k$ and 
                  $\mathbf{u}_{k-1}^{n+1}$.}
         \STATE{Compute the artificial diffusion matrix 
                ${\mathbb D}^{n+1}_{k-1}$ from \eqref{eq:dijk} and set
                ${\mathbb L}^{n+1}_{k-1}
                 ={\mathbb A}^{n+1}_{k-1}+{\mathbb D}^{n+1}_{k-1}$.}
         \STATE{Compute the algebraic fluxes $f^{n+1}_{ij,k-1}$ from
                 \eqref{eq:fluxesk} and \eqref{eq:prelimitingk}.}
         \STATE{Compute the limiters $\alpha^{n+1}_{ij,k-1}$ by the Zalesak
                algorithm \eqref{eq:zalesak1}--\eqref{eq:zalesak4} using the 
                fluxes $f^{n+1}_{ij,k-1}$ and the intermediate solution 
                $\overline{\mathbf{u}}$.}
         \STATE{Compute $\mathbf{u}^{n+1}_k$ by solving the linear system
                \eqref{eq:fctk}.}
         \IF{$\max\left\lbrace \Vert \mathbf{c}_{k}^{n+1} -
            \mathbf{c}_{k-1}^{n+1}\Vert_M^{}, \Vert \mathbf{p}_{k}^{n+1} -
            \mathbf{p}_{k-1}^{n+1}\Vert_M^{}, \Vert \mathbf{u}_{k}^{n+1} -
            \mathbf{u}_{k-1}^{n+1}\Vert_M^{}\right\rbrace  
            < \mathrm{Tol}$} 
            \STATE{Go to line 23.}
         \ELSE
         \STATE{Set $\mathbf{c}_{k}^{n+1}:=\beta\,\mathbf{c}_{k}^{n+1} 
                     + (1-\beta)\,\mathbf{c}_{k-1}^{n+1}\,,\quad
                     \mathbf{p}_{k}^{n+1}:=\beta\,\mathbf{p}_{k}^{n+1} 
                     + (1-\beta)\,\mathbf{p}_{k-1}^{n+1}\,,\quad
                     \mathbf{u}_{k}^{n+1}:=\beta\,\mathbf{u}_{k}^{n+1} 
                     + (1-\beta)\,\mathbf{u}_{k-1}^{n+1}$.}
         \ENDIF
      \ENDFOR
      \STATE{Set $\mathbf{c}^{n+1} = \mathbf{c}_{k}^{n+1}\,, \quad 
                  \mathbf{p}^{n+1} = \mathbf{p}_{k}^{n+1}\,, \quad 
                  \mathbf{u}^{n+1} = \mathbf{u}_{k}^{n+1}$.}
   \ENDFOR
\end{algorithmic}
\end{algorithm}

\section{Numerical results}
\label{sec:numerics}
In the following, we present several numerical experiments to verify the
positivity preserving properties of the proposed scheme for the model
(\ref{eq1})--(\ref{eq5}).

The computations are performed on a square domain $\Omega = (0 , 20)^2$ which
is decomposed into quadrilateral mesh cells obtained by uniform refinements.
Precisely, after $r$ refinements, the triangulation ${\mathcal T}_h$ consists
of $2^{2\,r}$ equal squares. If not otherwise stated, we consider five
refinements, i.e., ${\mathcal T}_h$ consists of $32\times32$ mesh cells. As
explained above, conforming bilinear finite elements are used for approximating
all unknown variables. The final time is $T=50$ and the parameter 
$\epsilon = 0.2$ is used. The values of the remaining parameters of the model
will be specified for the particular computations. The initial conditions are
defined by
\begin{equation*}
   u^0(x) = {\mathrm e}^{-|x|^2}\,,\qquad 
   c^0(x) = 1 - \frac12\,{\mathrm e}^{-|x|^2}\,, \qquad 
   p^0{(x)} = \frac12\,{\mathrm e}^{-|x|^2}\,.
\end{equation*}
If not otherwise stated, we apply the A-stable Crank-Nicolson method
corresponding to $\theta = 0.5$ for the time discretization. In one case, we
will also discuss the application of the unconditionally stable backward Euler 
method corresponding to $\theta = 1$. Algorithm~\ref{alg:nonlin_femfct} is used
with the tolerance $\mathrm{Tol}=10^{-8}$
and the damping factor $\beta=0.5$.
The linear system \eqref{eq:fctk} is solved using the sparse direct solver
UMFPACK \cite{Dav04}. Our newly developed algorithms are implemented in the
open-source finite element library deal.II \cite{deal2020,dealII94}.

\subsection{Comparison between the standard Galerkin FEM and the FEM-FCT
scheme in presence of diffusion}

To begin with, in the first example we consider the modified model subjected to
an extra diffusion term in the equation \eqref{eq1} with diffusion coefficient
$\alpha^{-1}$, as considered in \cite{fuest2022global}, i.e., the equation
\eqref{eq1} is replaced by \eqref{eq6}. We consider $\alpha=10$, $\chi=1$, and
$\mu=1$. As can be seen from Figs.~\ref{fig1} and \ref{fig3}, the FEM-FCT
scheme introduces slightly more artificial diffusion than the standard Galerkin
FEM. One can observe that the cancer cells invade the extracellular matrix
and occupy the whole domain completely at the final time. Next, we decrease the
amount of the diffusion by setting $\alpha=1000$ and keep the proliferation and
haptotaxis rate as before. As can be seen from Figs.~\ref{fig5} and \ref{fig6},
the standard Galerkin FEM shows some oscillations in the front layer and the
numerical simulation breaks down when the solution reaches the boundary of the
computational domain, whereas applying the FEM-FCT removes the oscillations and
keeps the solution positive at all times. The corresponding snapshots of the
cancer cell density, extracellular matrix, and protease are plotted along the
line $y=x$ in Figs. \ref{fig2}, \ref{fig4}, \ref{fig6}, and \ref{fig8}.

\begin{figure}[H]
	\centering
	\begin{subfigure}{0.24\textwidth}
		\includegraphics[width=\textwidth]{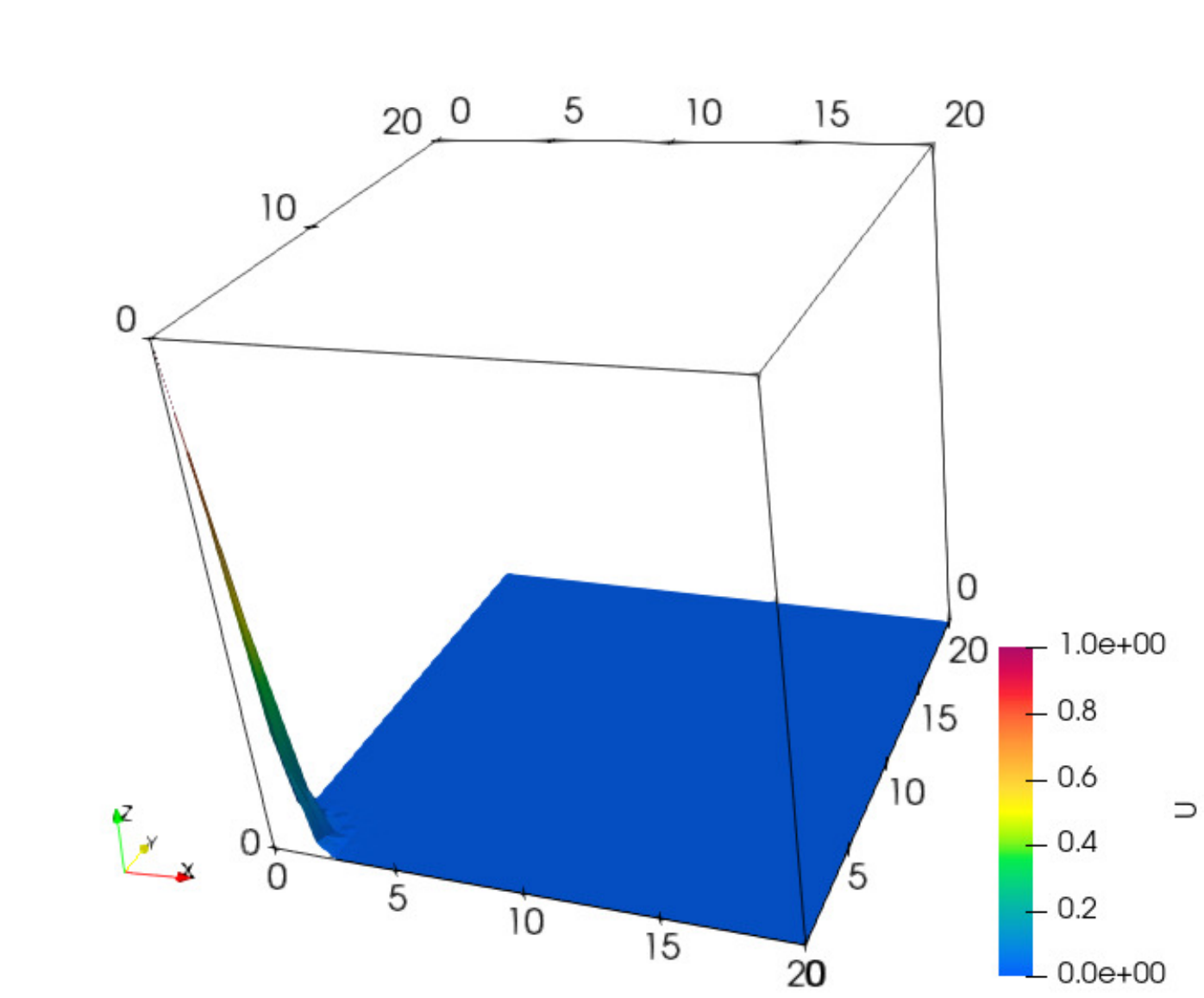}
		\caption{t = 0}
	\end{subfigure}
	\begin{subfigure}{0.24\textwidth}
		\includegraphics[width=\textwidth]{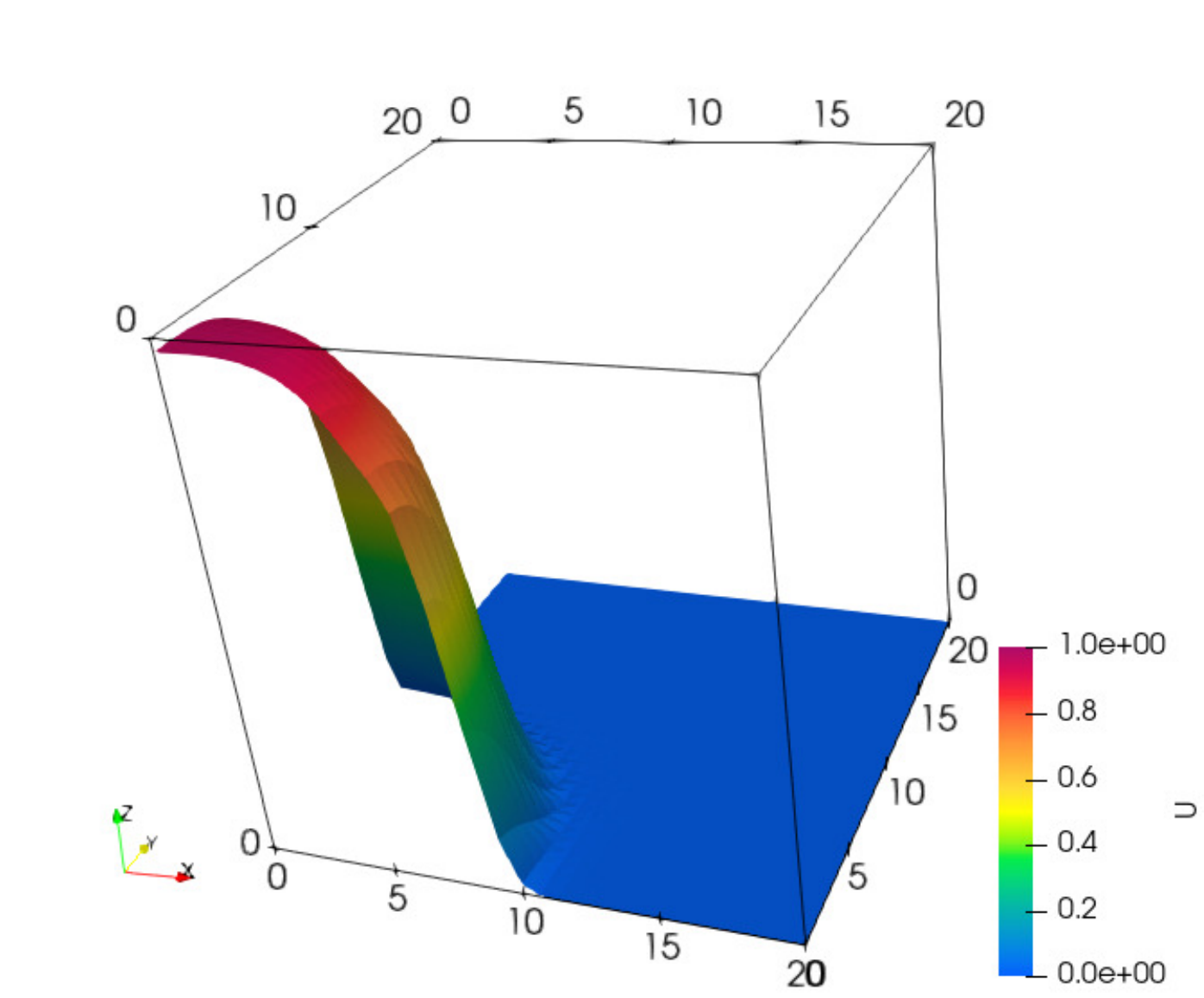}
		\caption{t = 10}
	\end{subfigure}
	\begin{subfigure}{0.24\textwidth}
		\includegraphics[width=\textwidth]{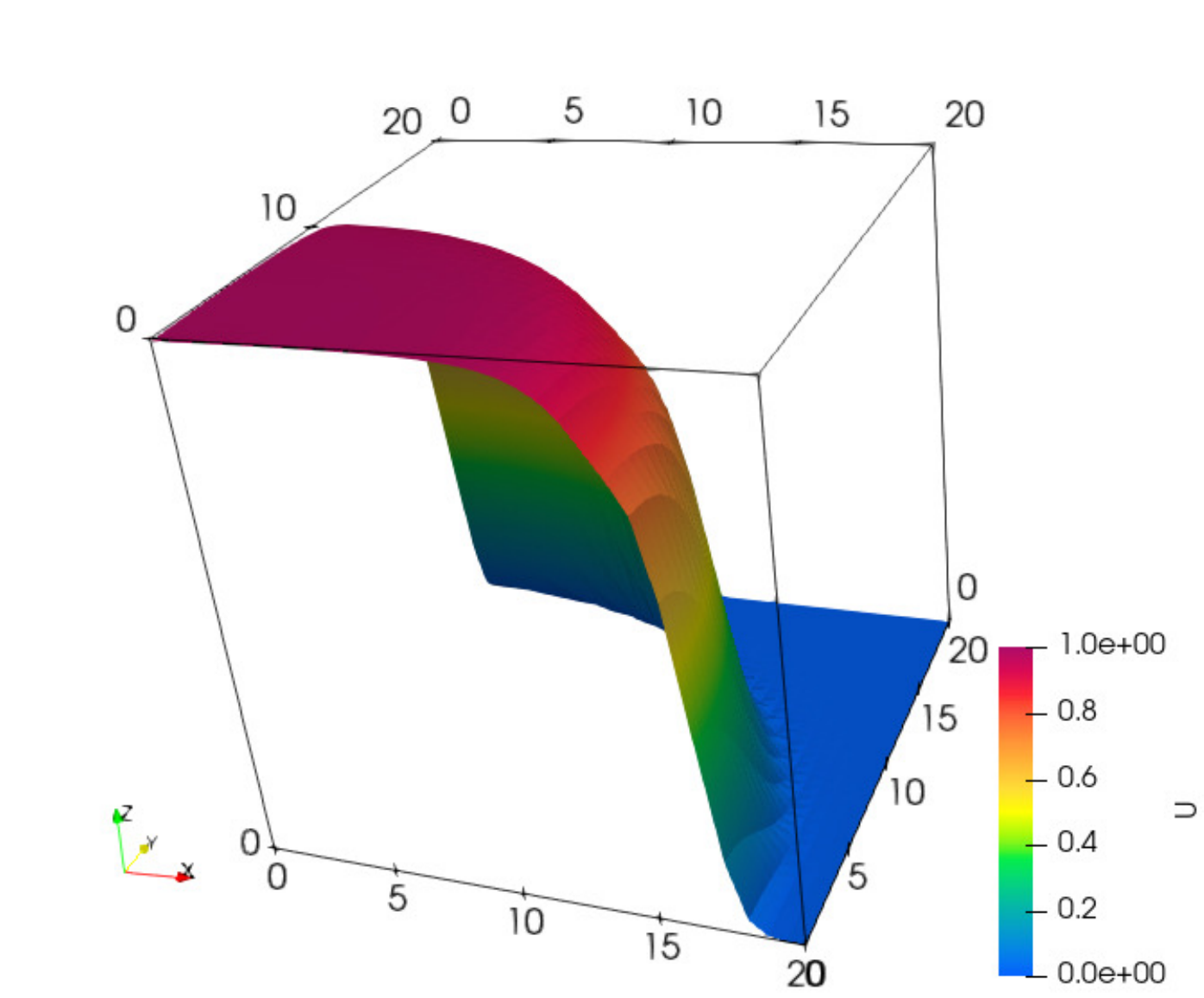}
		\caption{t = 20}
	\end{subfigure}
	\begin{subfigure}{0.24\textwidth}
		\includegraphics[width=\textwidth]{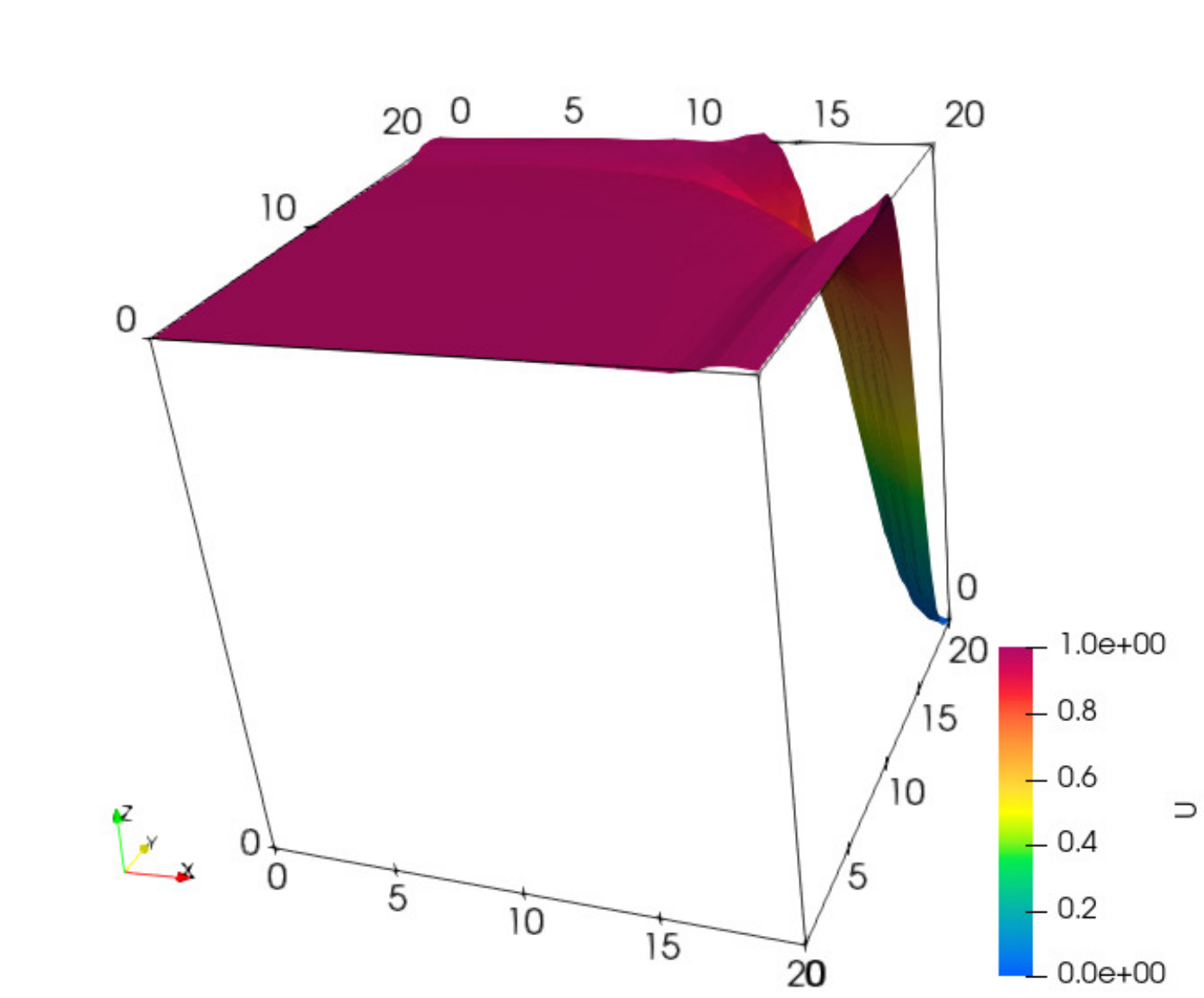}
		\caption{t = 30}
	\end{subfigure}
	\caption{
	Cancer cell invasion $u$ at different time instants $t=0, 10, 20, 30$,
obtained with the standard Galerkin FEM for $\alpha =10$, $\mu = 1$ and
$\chi=1$.
	 }
	\label{fig1}
\end{figure}

\begin{figure}[H]
	\centering
	\begin{subfigure}{0.24\textwidth}
		\includegraphics[width=\textwidth]{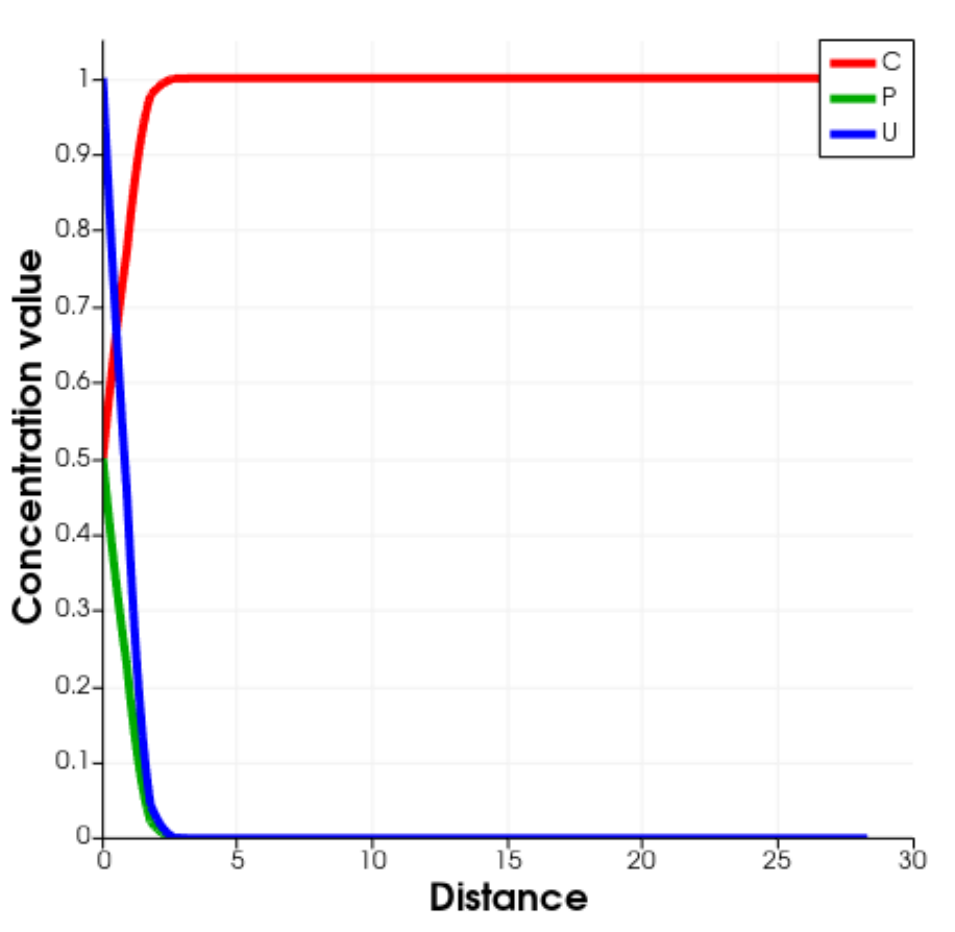}
		\caption{t = 0}
	\end{subfigure}
	\begin{subfigure}{0.24\textwidth}
		\includegraphics[width=\textwidth]{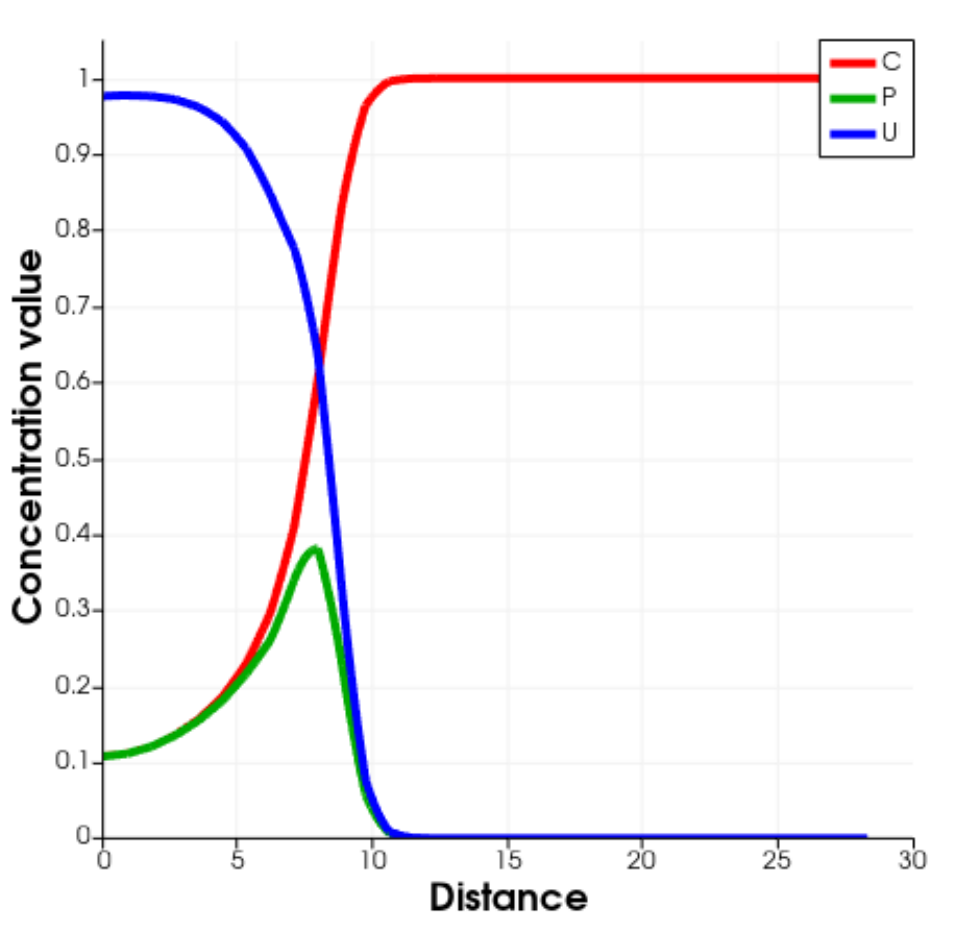}
		\caption{t = 10}
	\end{subfigure}
	\begin{subfigure}{0.24\textwidth}
		\includegraphics[width=\textwidth]{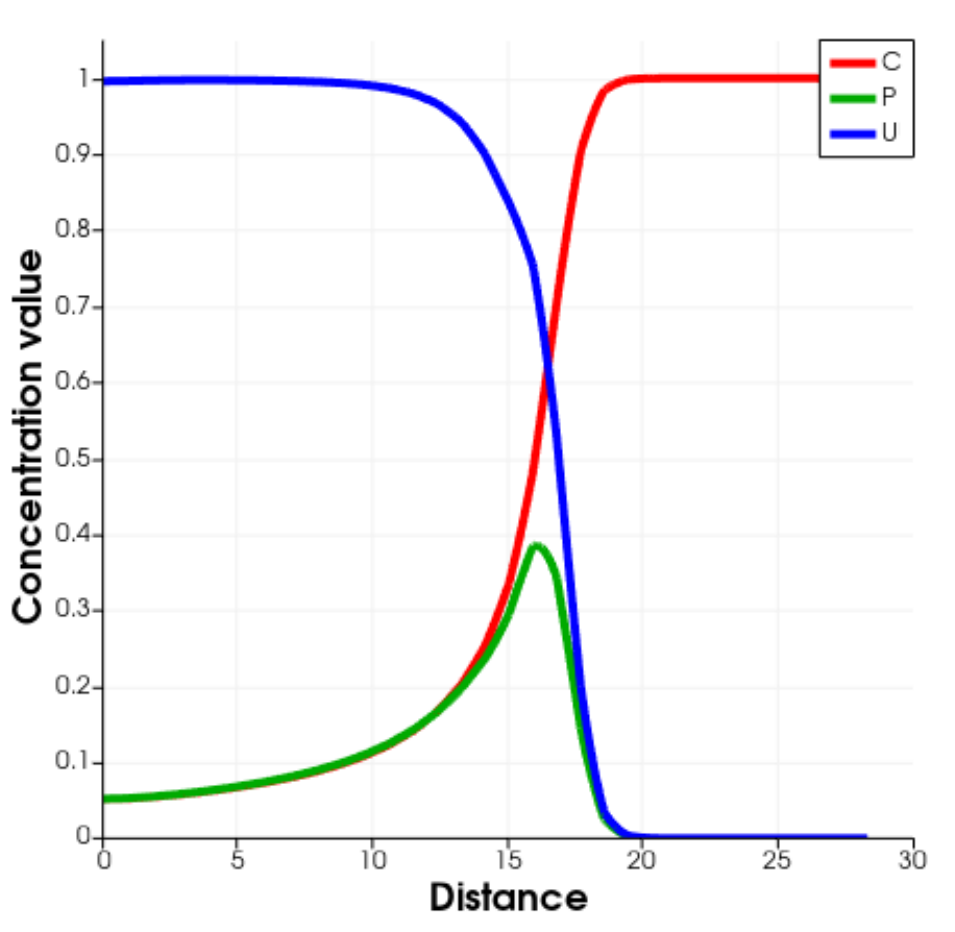}
		\caption{t = 20}
	\end{subfigure}
	\begin{subfigure}{0.24\textwidth}
		\includegraphics[width=\textwidth]{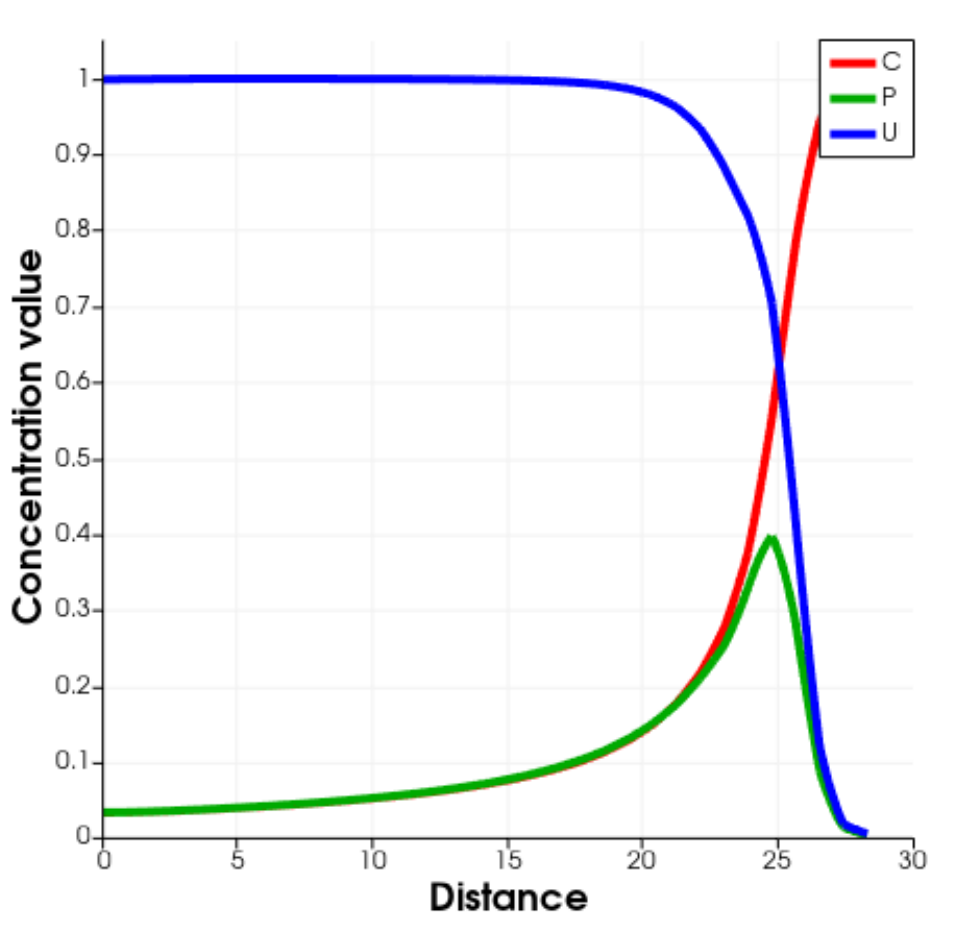}
		\caption{t = 30}
	\end{subfigure}
	\caption{
	Cancer cell invasion $u$, connective tissue $c$, and protease $p$ at
different time instants $t=0, 10, 20, 30$, obtained with the standard Galerkin
FEM for   $\alpha =10$, $\mu = 1$ and $\chi=1$.
	 }
	\label{fig2}
\end{figure}

\begin{figure}[H]
	\centering
	\begin{subfigure}{0.24\textwidth}
		\includegraphics[width=\textwidth]{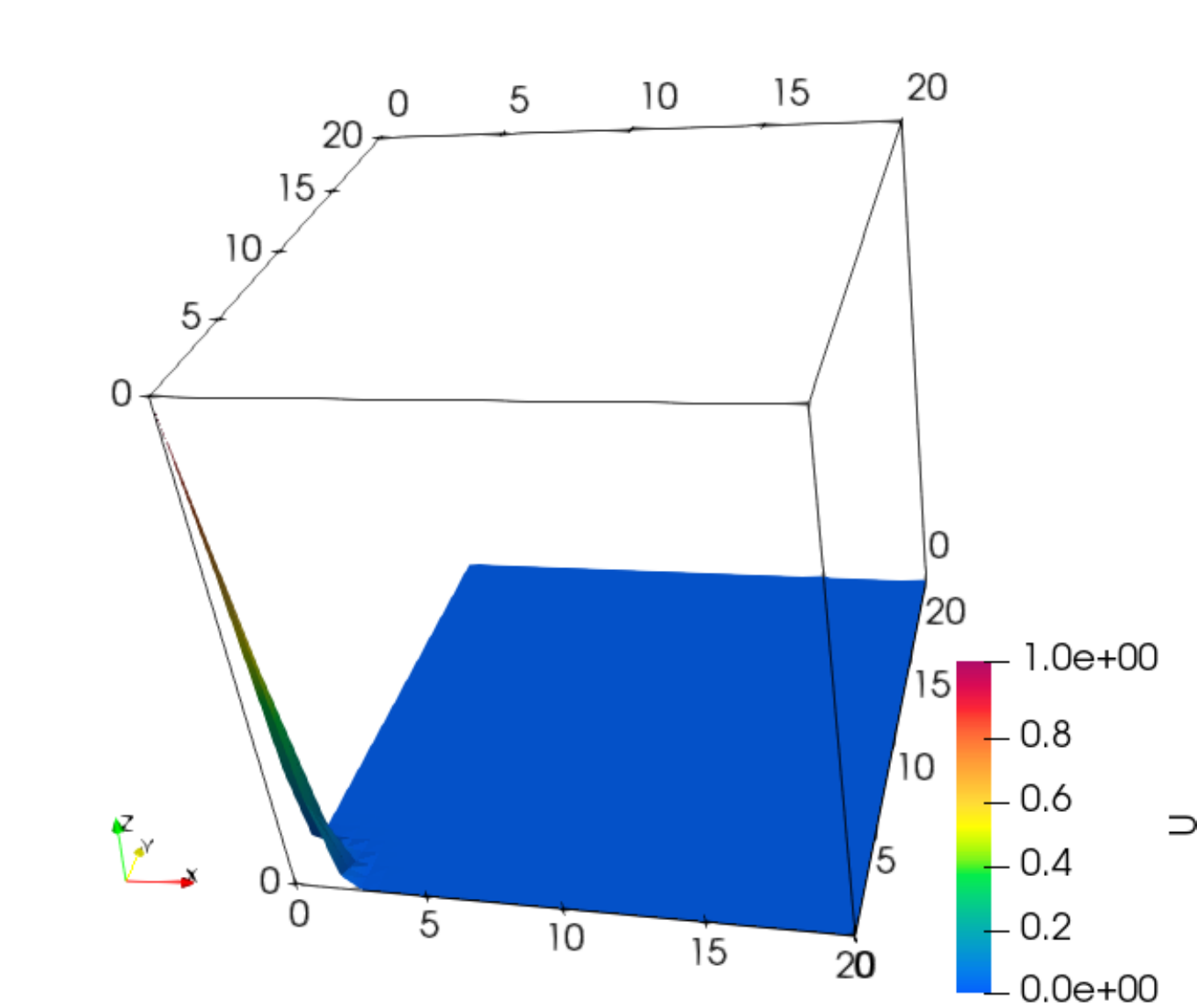}
		\caption{t = 0}
	\end{subfigure}
	\begin{subfigure}{0.24\textwidth}
		\includegraphics[width=\textwidth]{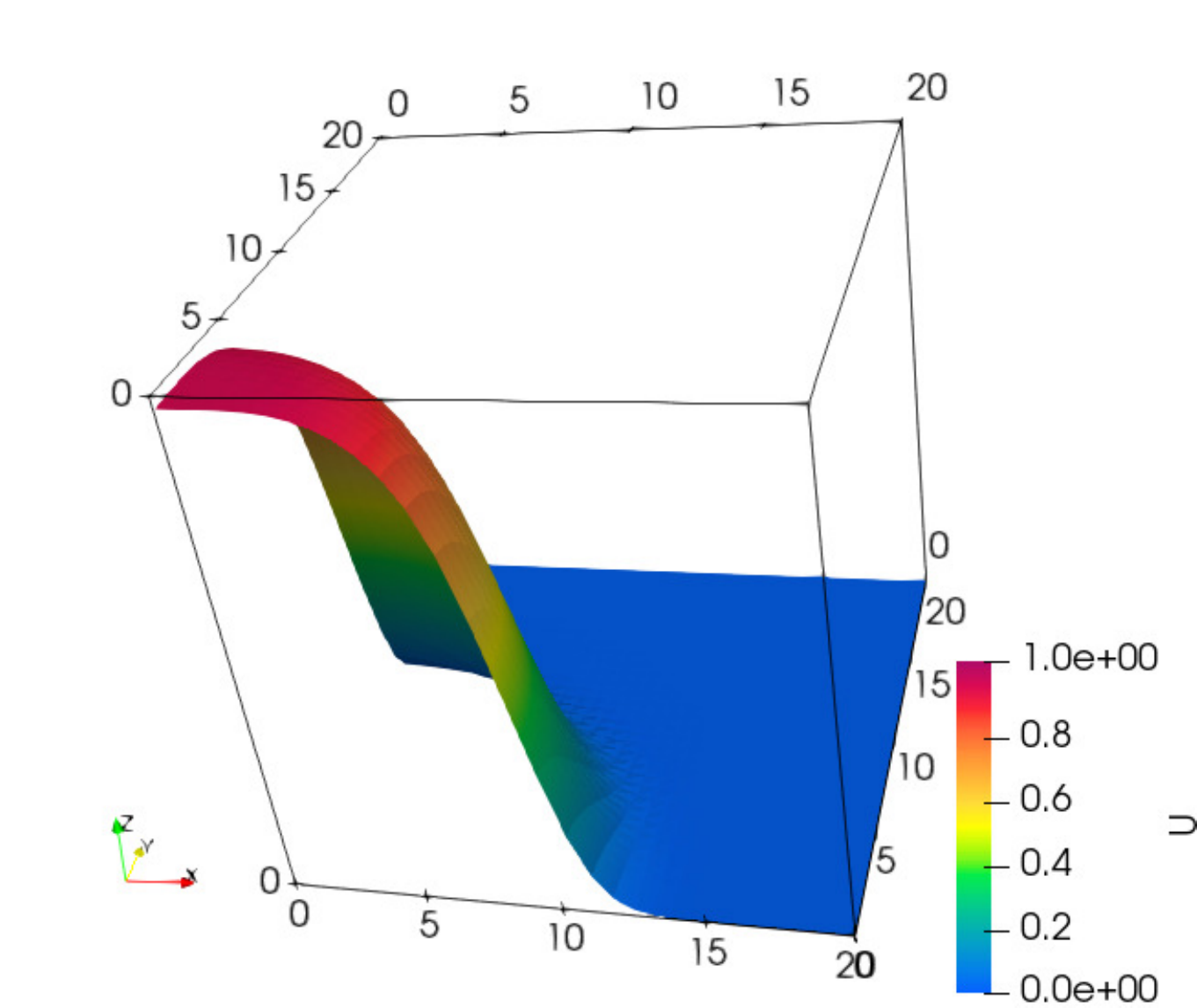}
		\caption{t = 10}
	\end{subfigure}
	\begin{subfigure}{0.24\textwidth}
		\includegraphics[width=\textwidth]{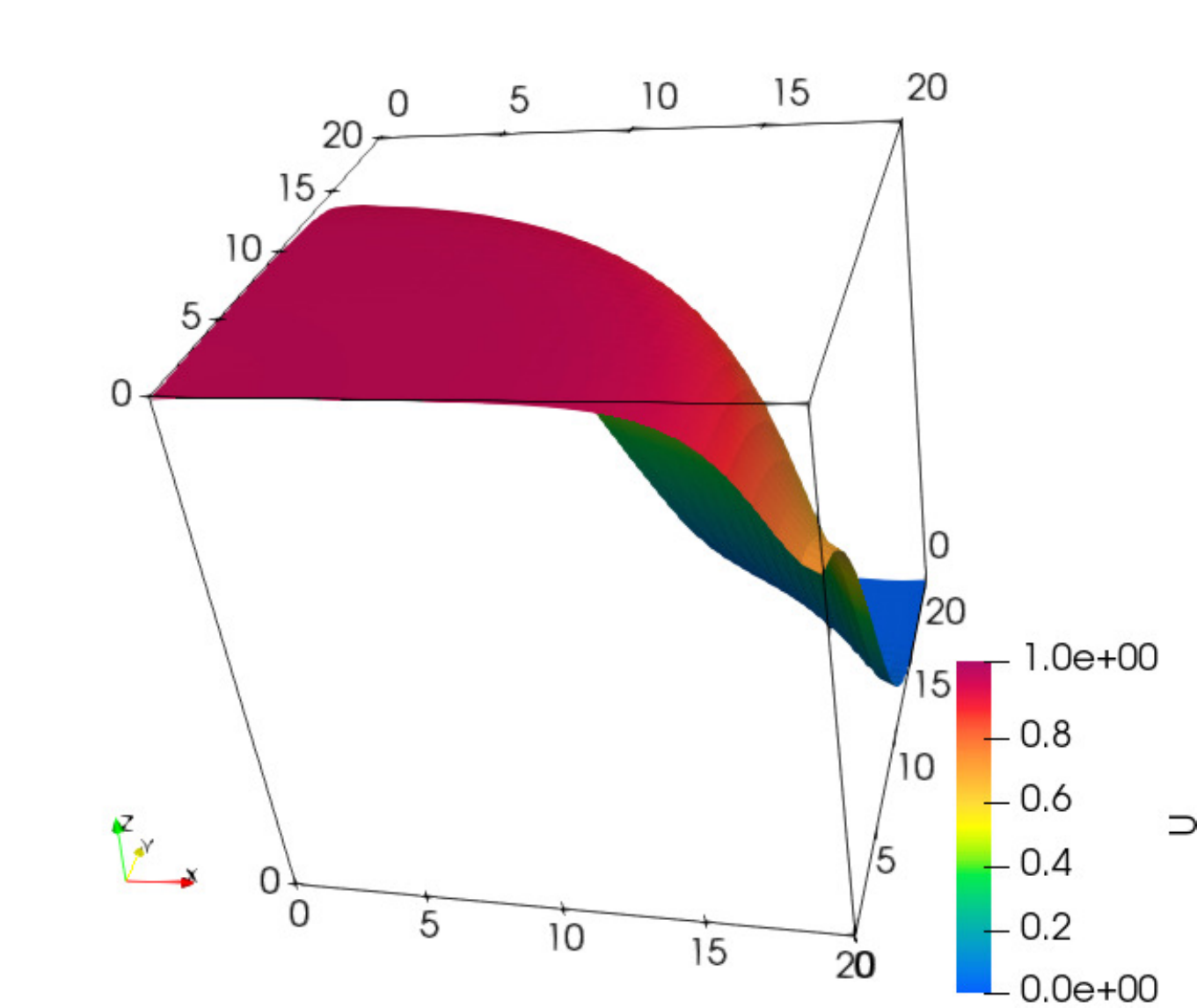}
		\caption{t = 20}
	\end{subfigure}
	\begin{subfigure}{0.24\textwidth}
		\includegraphics[width=\textwidth]{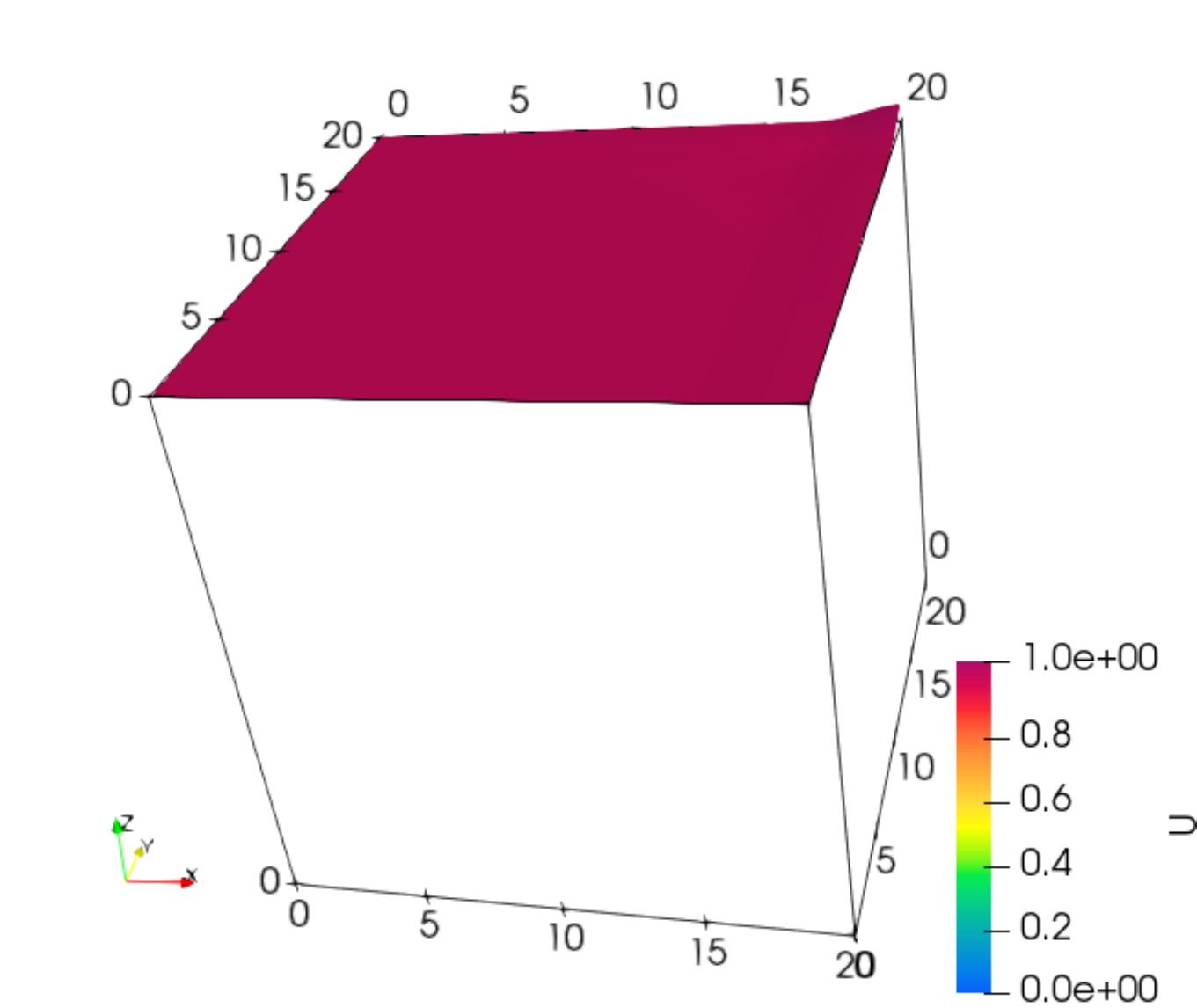}
		\caption{t = 30}
	\end{subfigure}
	\caption{
	Cancer cell invasion $u$ at different time instants $t=0, 10, 20, 30$,
obtained with the FEM-FCT scheme for $\alpha =10$, $\mu = 1$ and $\chi=1$.
	 }
	\label{fig3}
\end{figure}

\begin{figure}[H]
	\centering
	\begin{subfigure}{0.24\textwidth}
		\includegraphics[width=\textwidth]{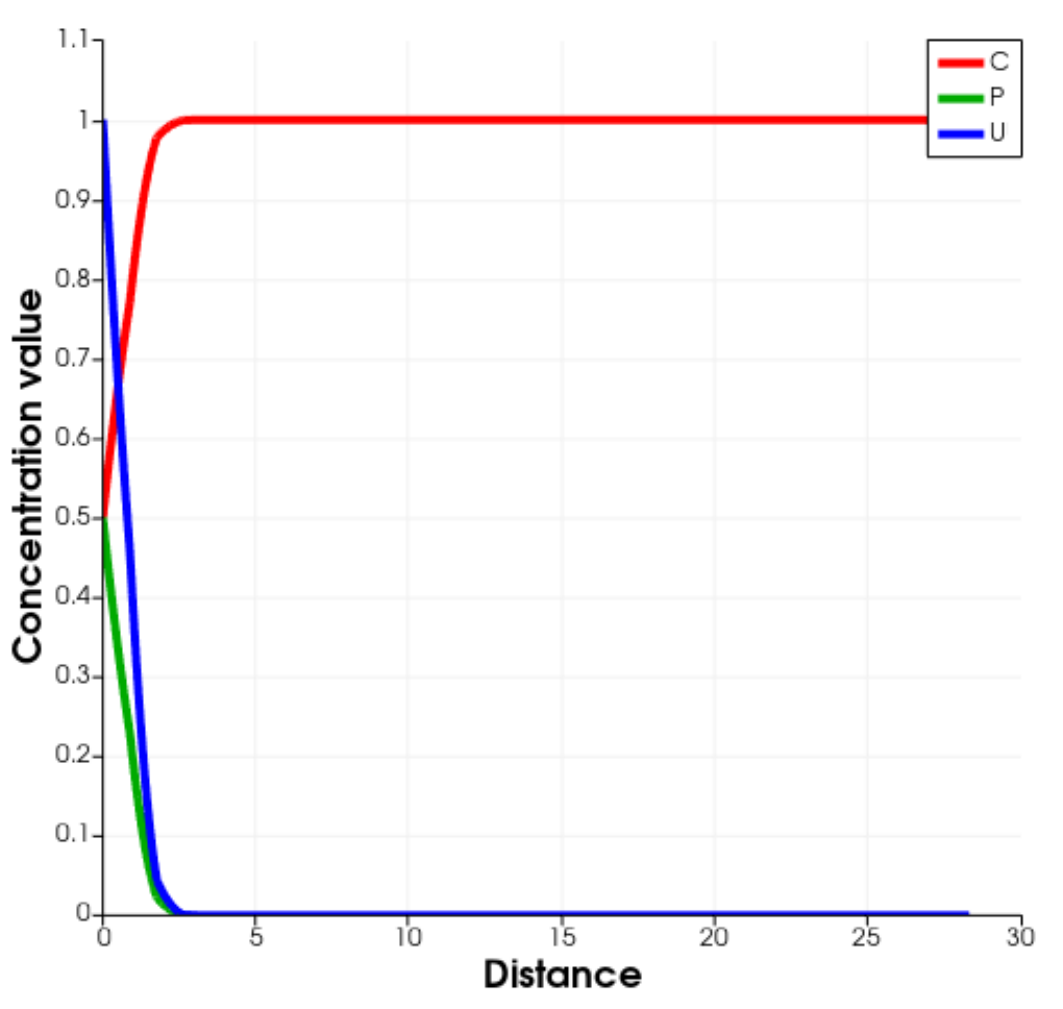}
		\caption{t = 0}
	\end{subfigure}
	\begin{subfigure}{0.24\textwidth}
		\includegraphics[width=\textwidth]{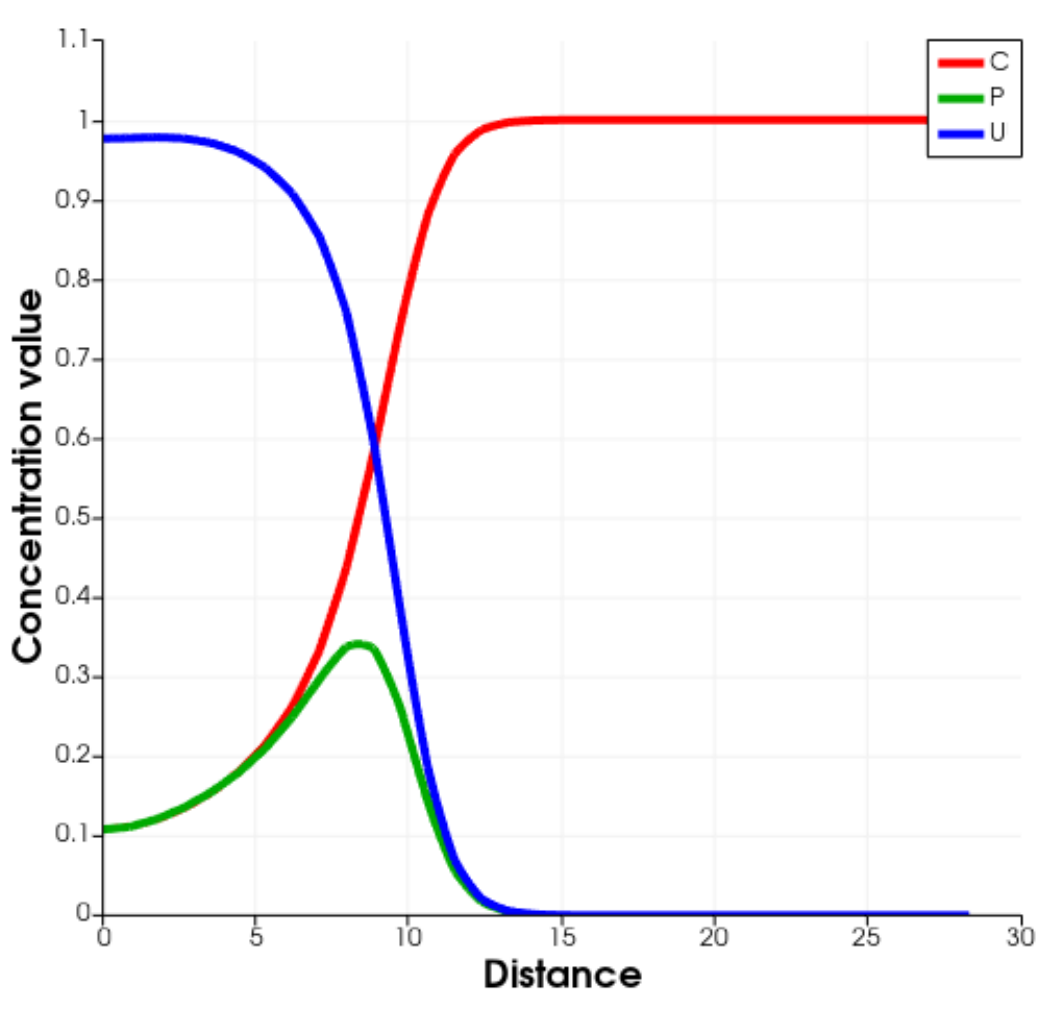}
		\caption{t = 10}
	\end{subfigure}
	\begin{subfigure}{0.24\textwidth}
		\includegraphics[width=\textwidth]{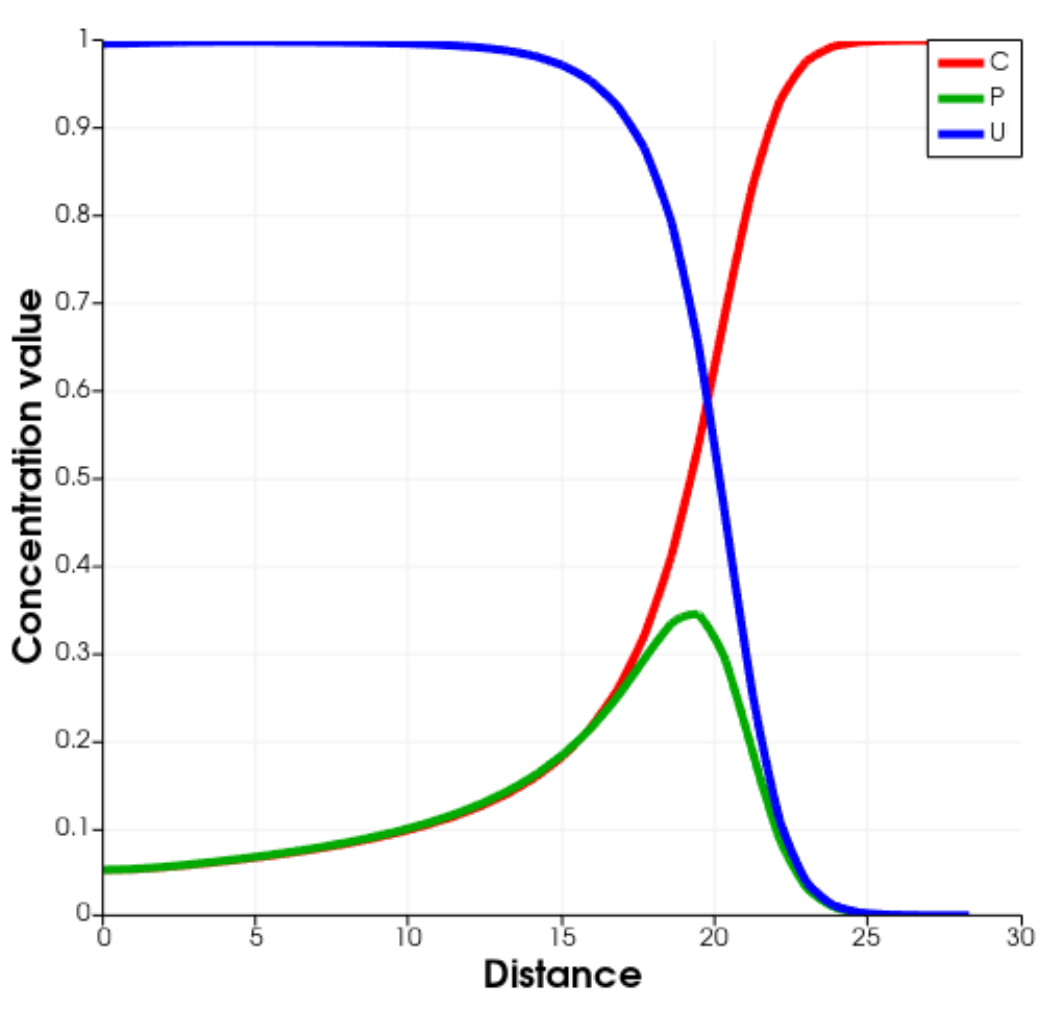}
		\caption{t = 20}
	\end{subfigure}
	\begin{subfigure}{0.24\textwidth}
		\includegraphics[width=\textwidth]{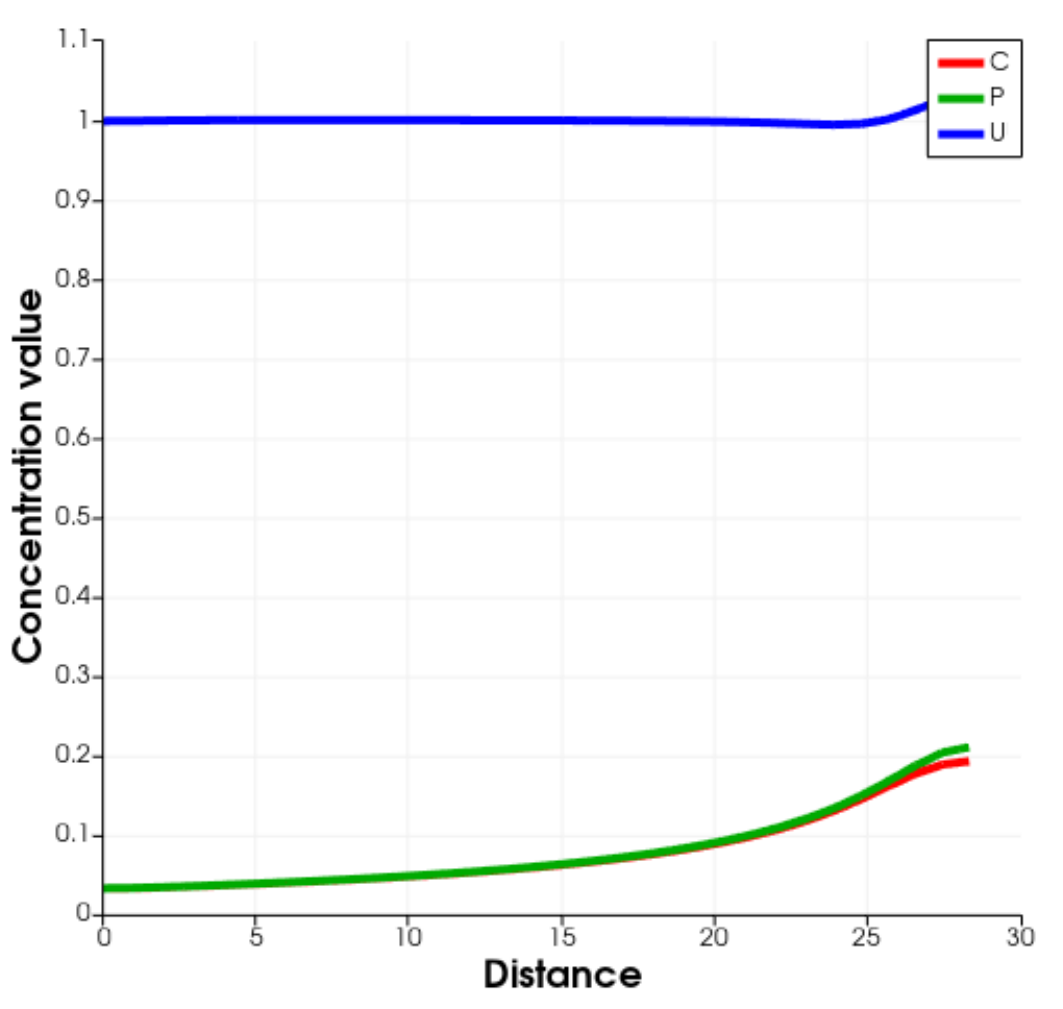}
		\caption{t = 30}
	\end{subfigure}
	\caption{
	Cancer cell invasion $u$, connective tissue $c$, and protease $p$ at
different time instants $t=0, 10, 20, 30$, 
obtained with the FEM-FCT scheme for $\alpha =10$, $\mu = 1$ and $\chi=1$.
	 }
	\label{fig4}
\end{figure}

\begin{figure}[H]
	\centering
	\begin{subfigure}{0.24\textwidth}
		\includegraphics[width=\textwidth]{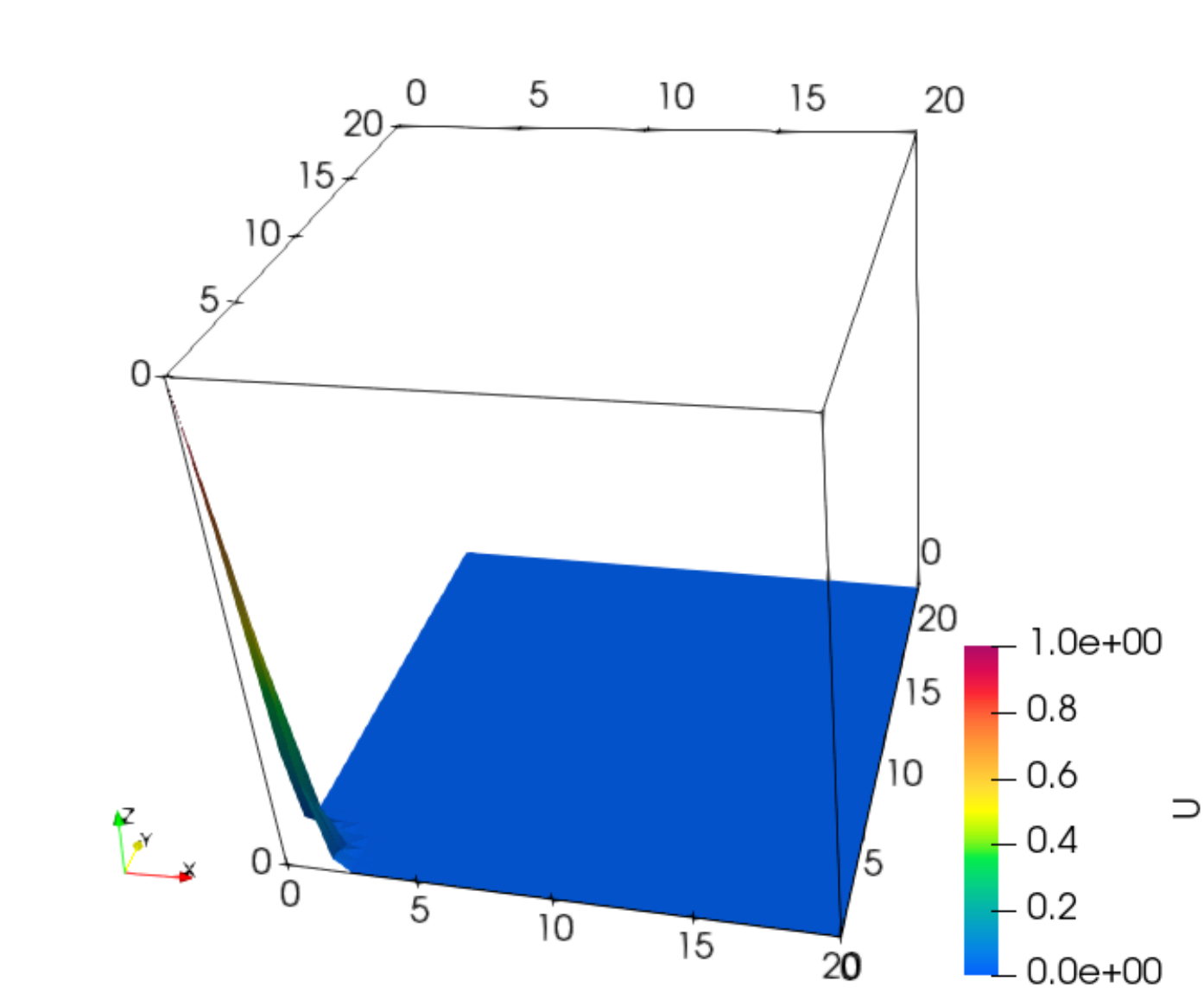}
		\caption{t = 0}
	\end{subfigure}
	\begin{subfigure}{0.24\textwidth}
		\includegraphics[width=\textwidth]{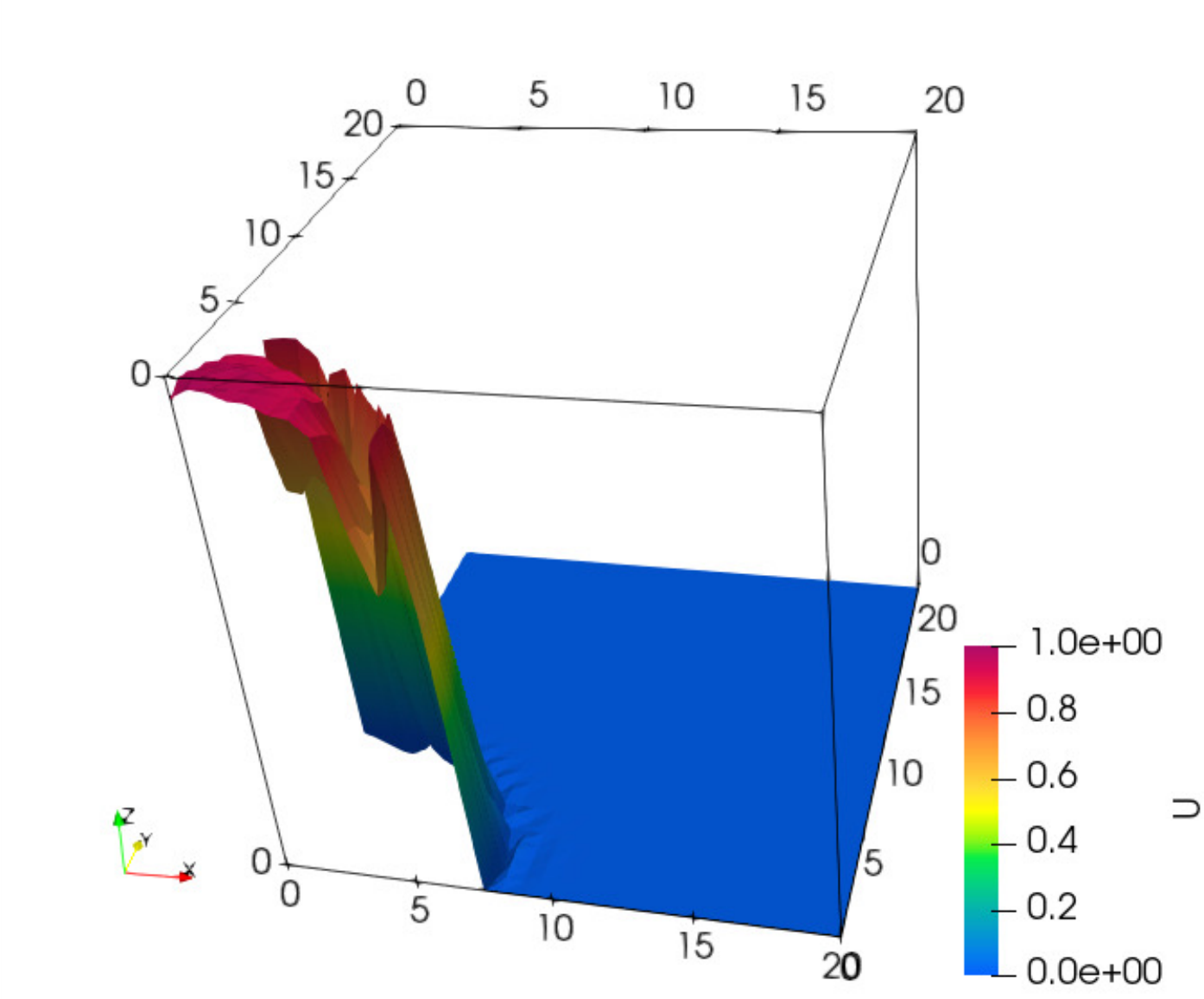}
		\caption{t = 10}
	\end{subfigure}
	\begin{subfigure}{0.24\textwidth}
		\includegraphics[width=\textwidth]{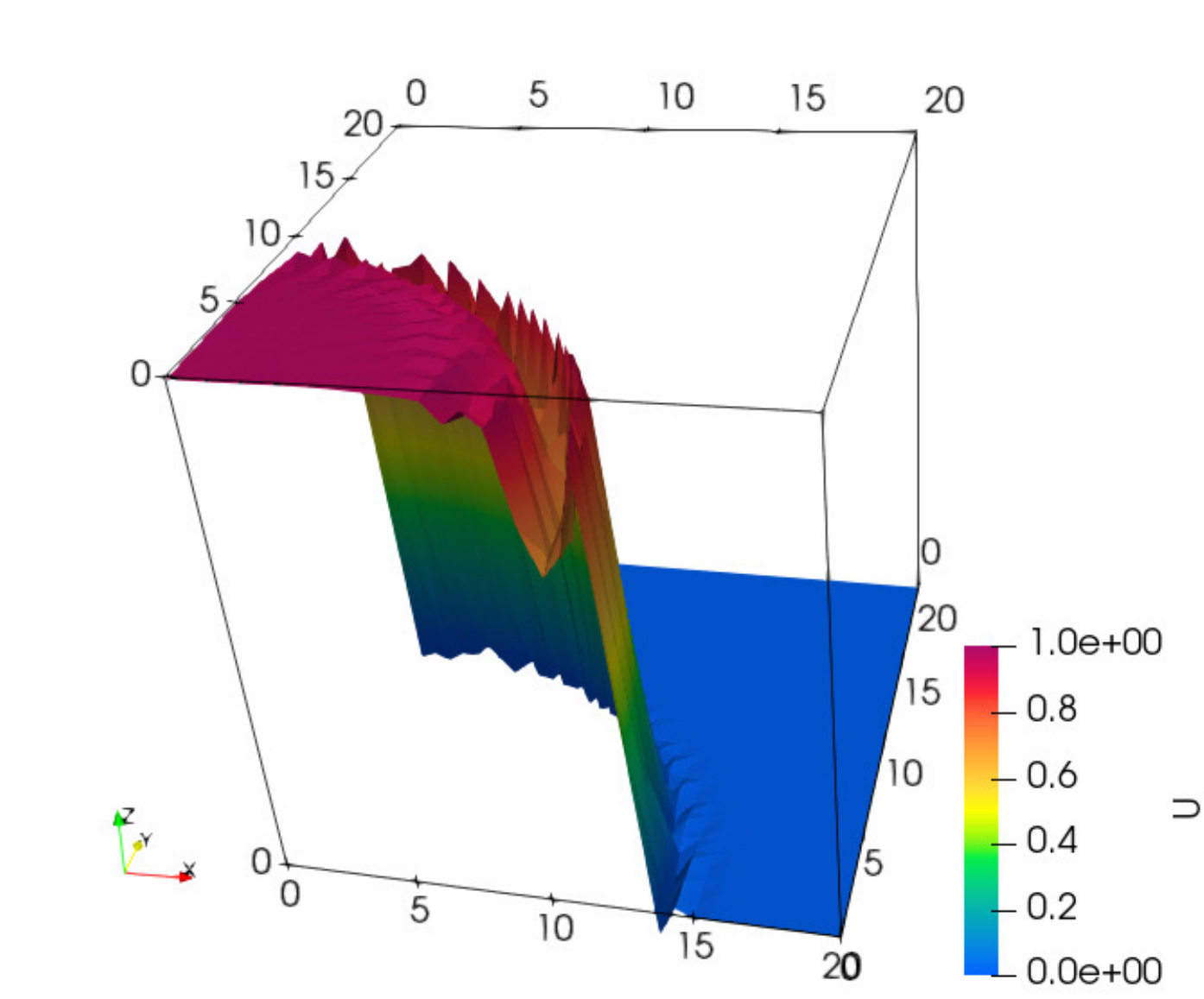}
		\caption{t = 20}
	\end{subfigure}
	\begin{subfigure}{0.24\textwidth}
		\includegraphics[width=\textwidth]{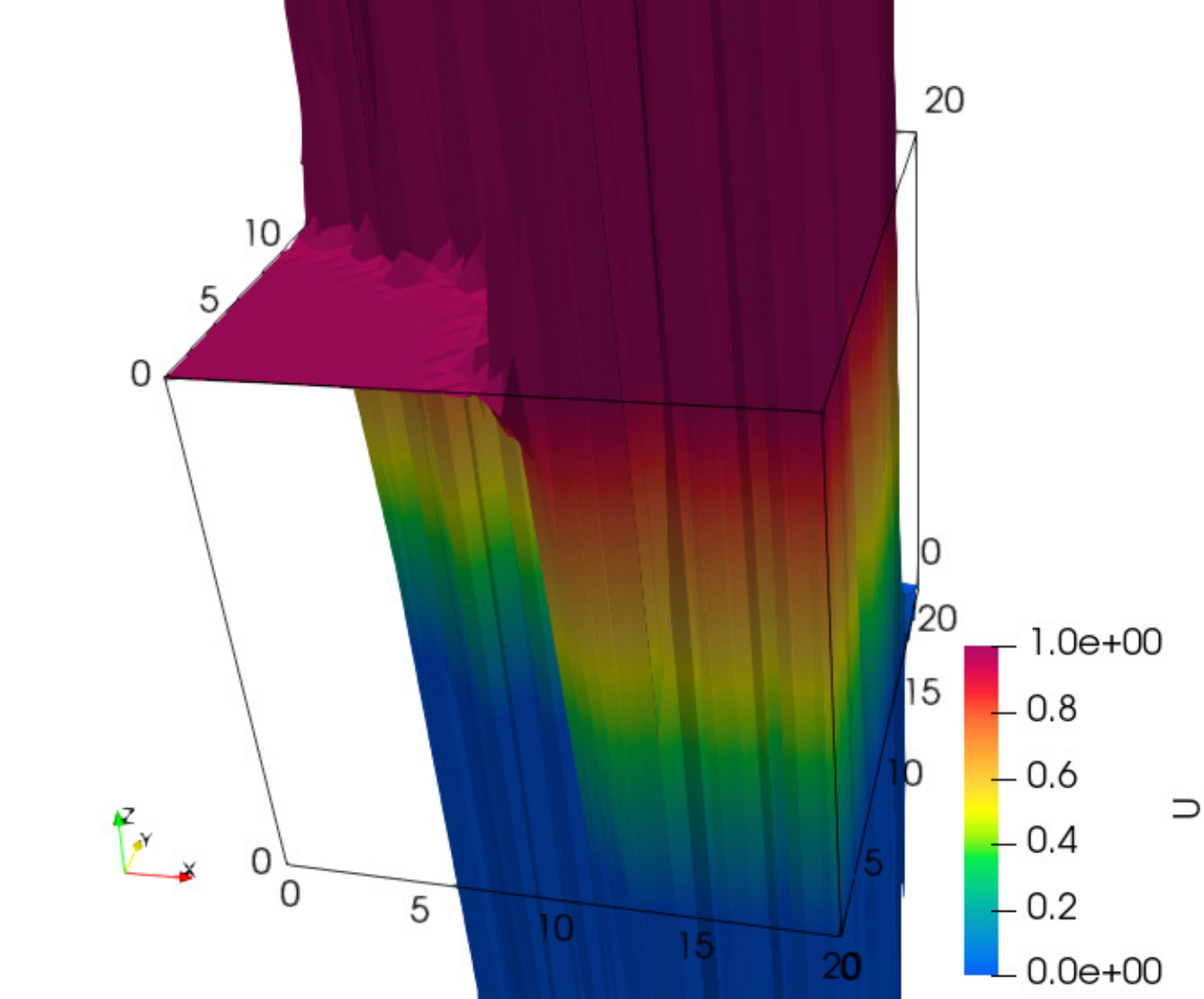}
		\caption{t = 30}
	\end{subfigure}
	\caption{
	Cancer cell invasion $u$ at different time instants $t=0, 10, 20, 30$,
obtained with the standard Galerkin FEM for $\alpha =1000$, $\mu = 1$ and
$\chi=1$.
	 }
	\label{fig5}
\end{figure} 

\begin{figure}[H]
	\centering
	\begin{subfigure}{0.24\textwidth}
		\includegraphics[width=\textwidth]{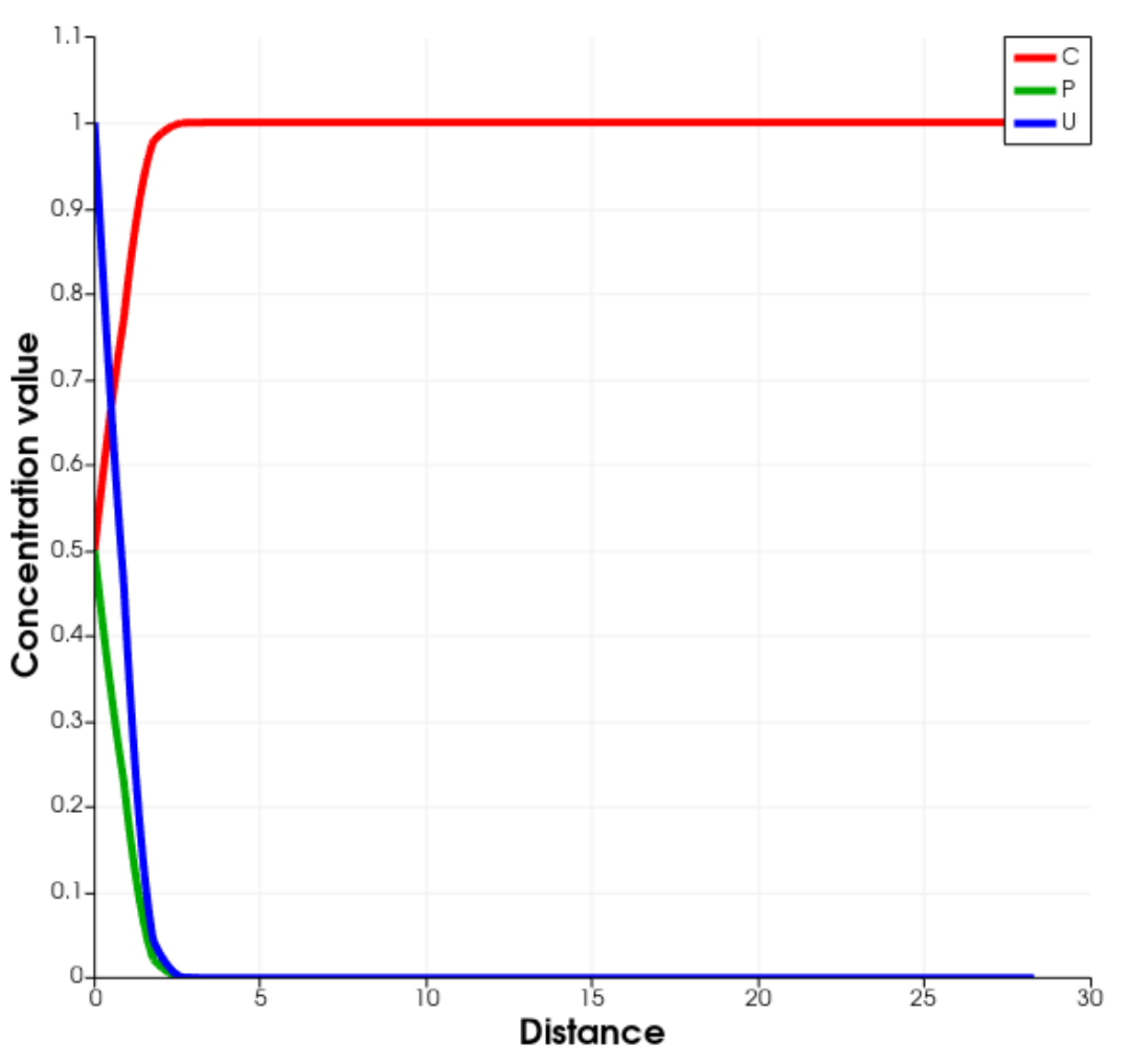}
		\caption{t = 0}
	\end{subfigure}
	\begin{subfigure}{0.24\textwidth}
		\includegraphics[width=\textwidth]{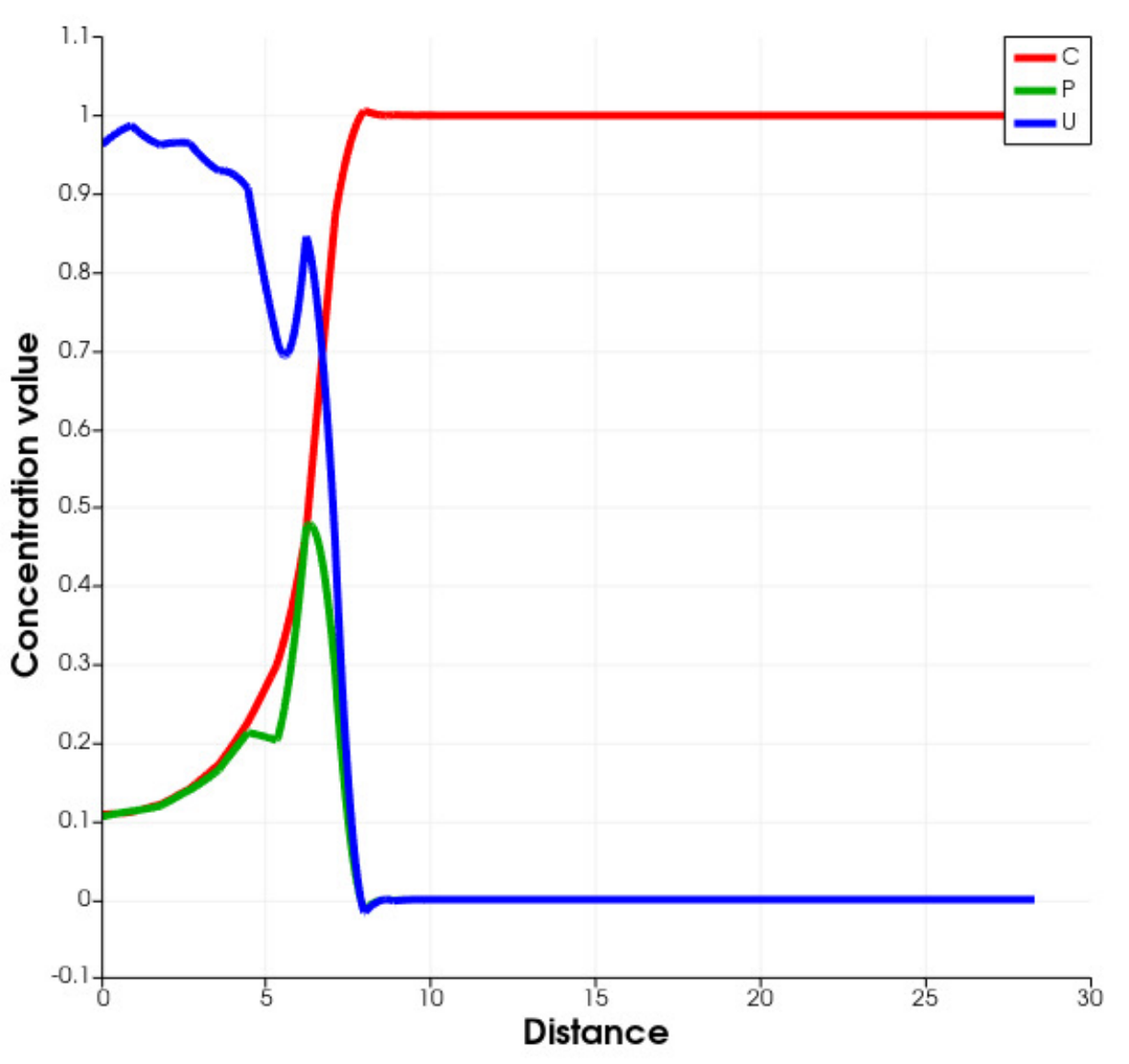}
		\caption{t = 10}
	\end{subfigure}
	\begin{subfigure}{0.24\textwidth}
		\includegraphics[width=\textwidth]{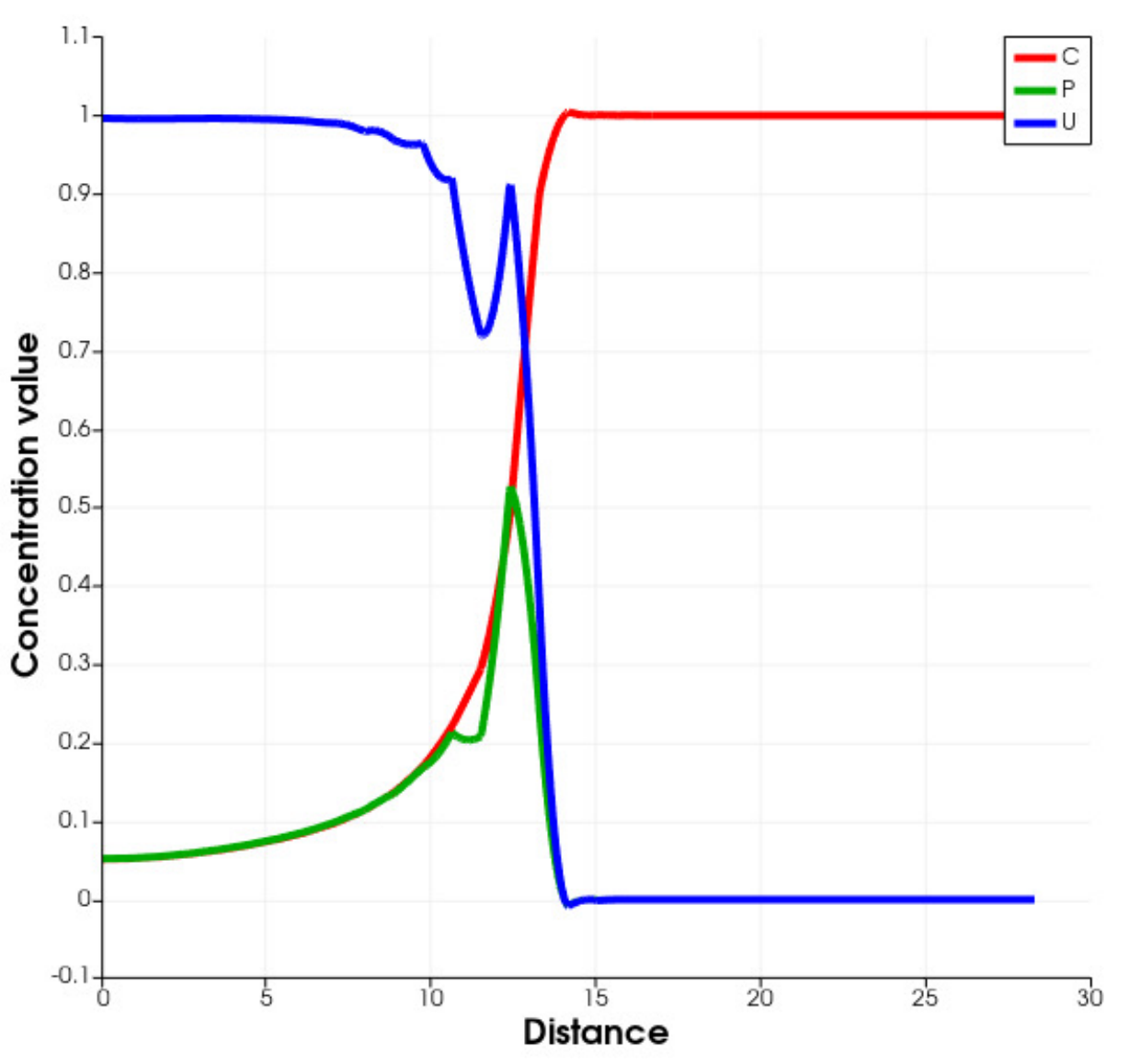}
		\caption{t = 20}
	\end{subfigure}
	\begin{subfigure}{0.24\textwidth}
		\includegraphics[width=\textwidth]{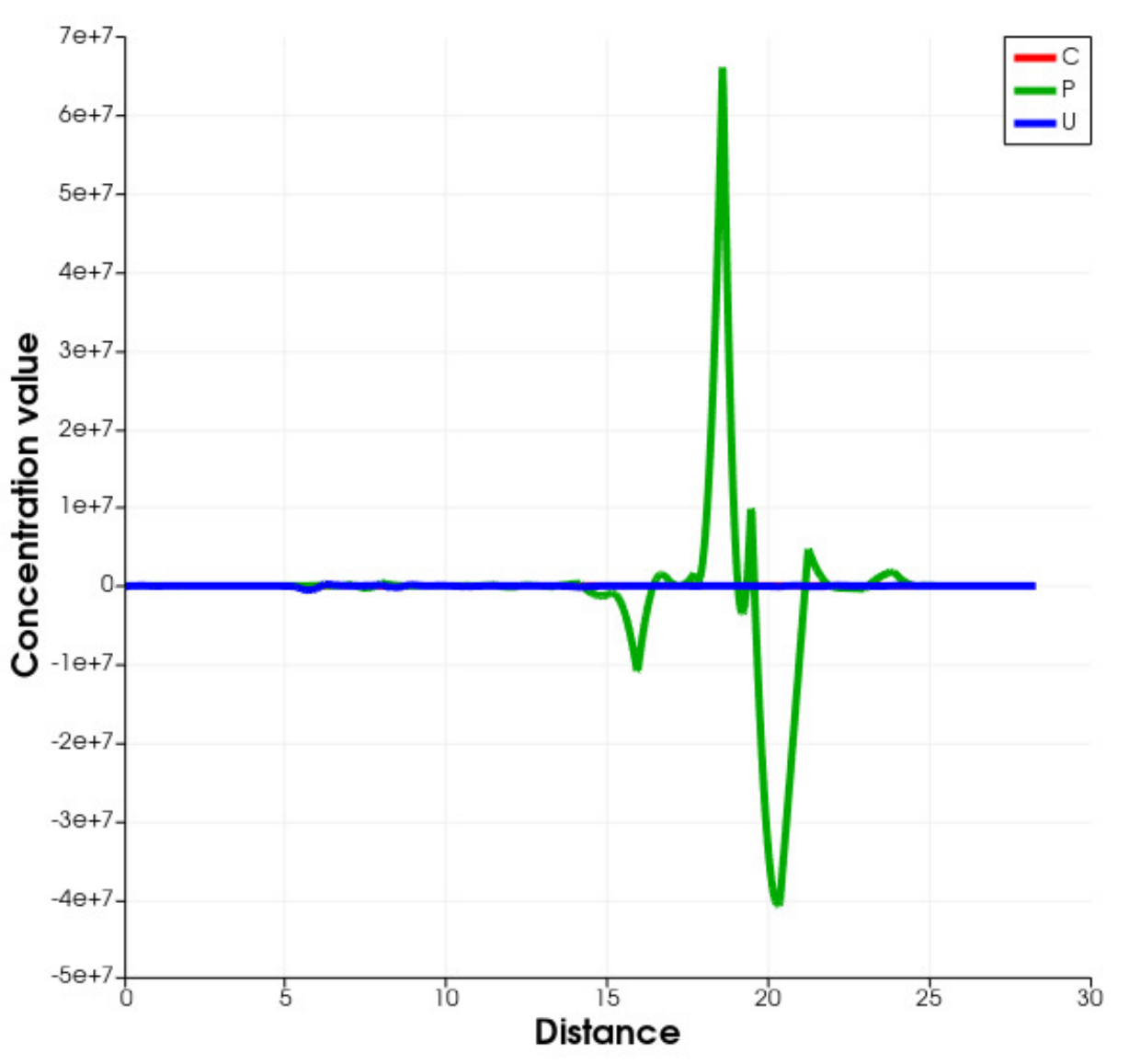}
		\caption{t = 30}
	\end{subfigure}
	\caption{
	Cancer cell invasion $u$, connective tissue $c$, and protease $p$ at
different time instants $t=0, 10, 20, 30$, obtained with the standard Galerkin 
FEM for $\alpha = 1000$, $\mu = 1$ and $\chi=1$.
	 }
	\label{fig6}
\end{figure} 

\begin{figure}[H]
	\centering
	\begin{subfigure}{0.24\textwidth}
		\includegraphics[width=\textwidth]{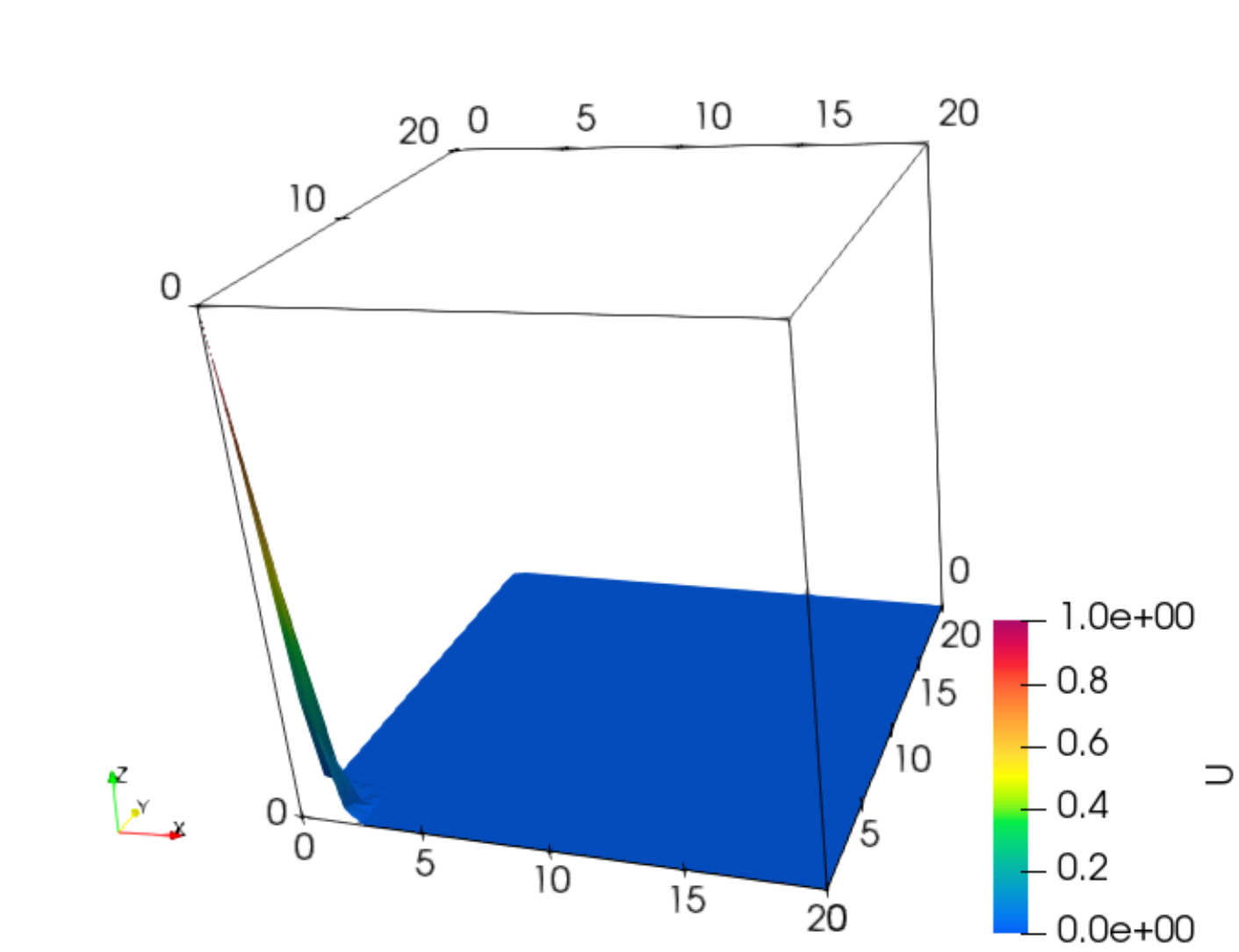}
		\caption{t = 0}
	\end{subfigure}
	\begin{subfigure}{0.24\textwidth}
		\includegraphics[width=\textwidth]{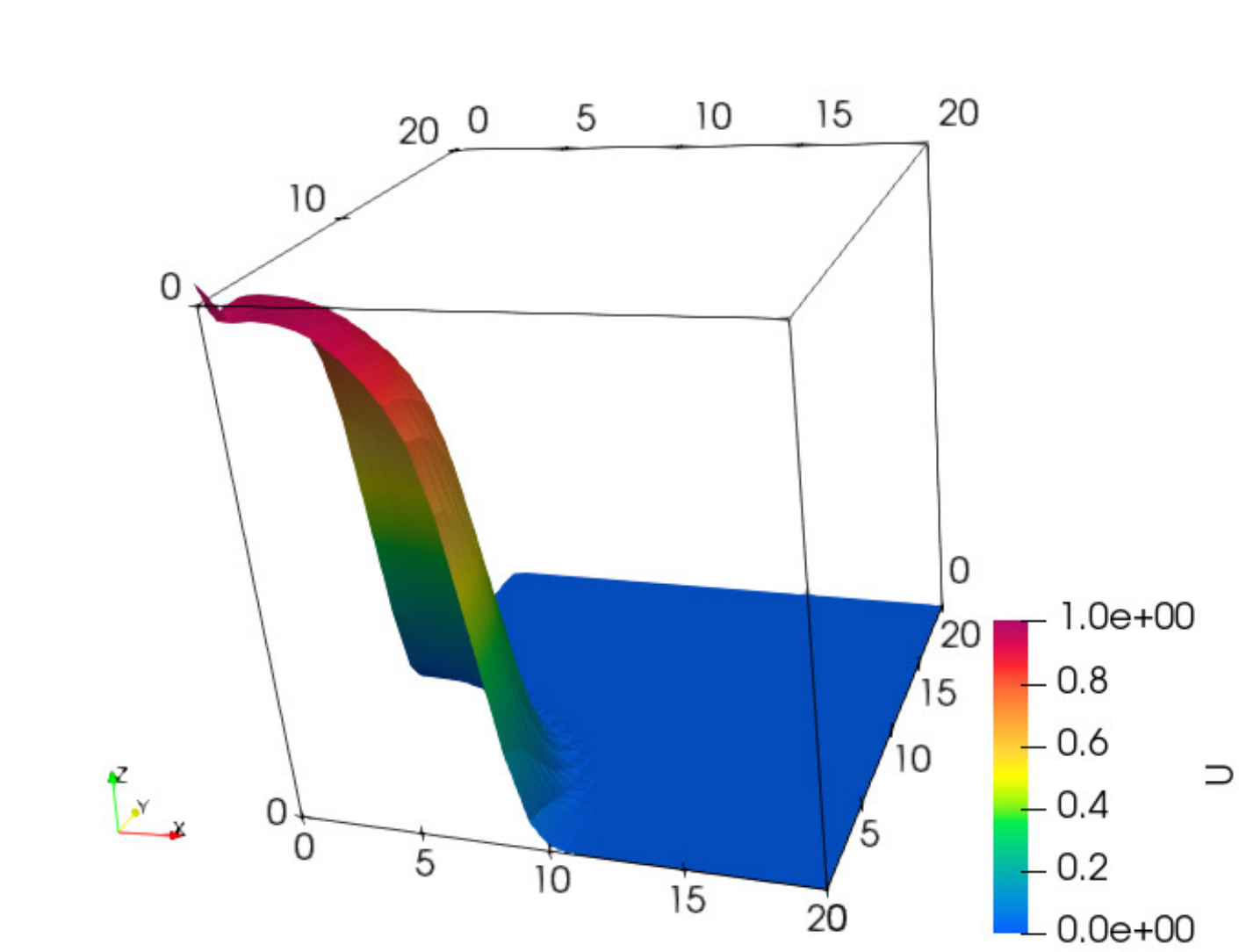}
		\caption{t = 10}
	\end{subfigure}
	\begin{subfigure}{0.24\textwidth}
		\includegraphics[width=\textwidth]{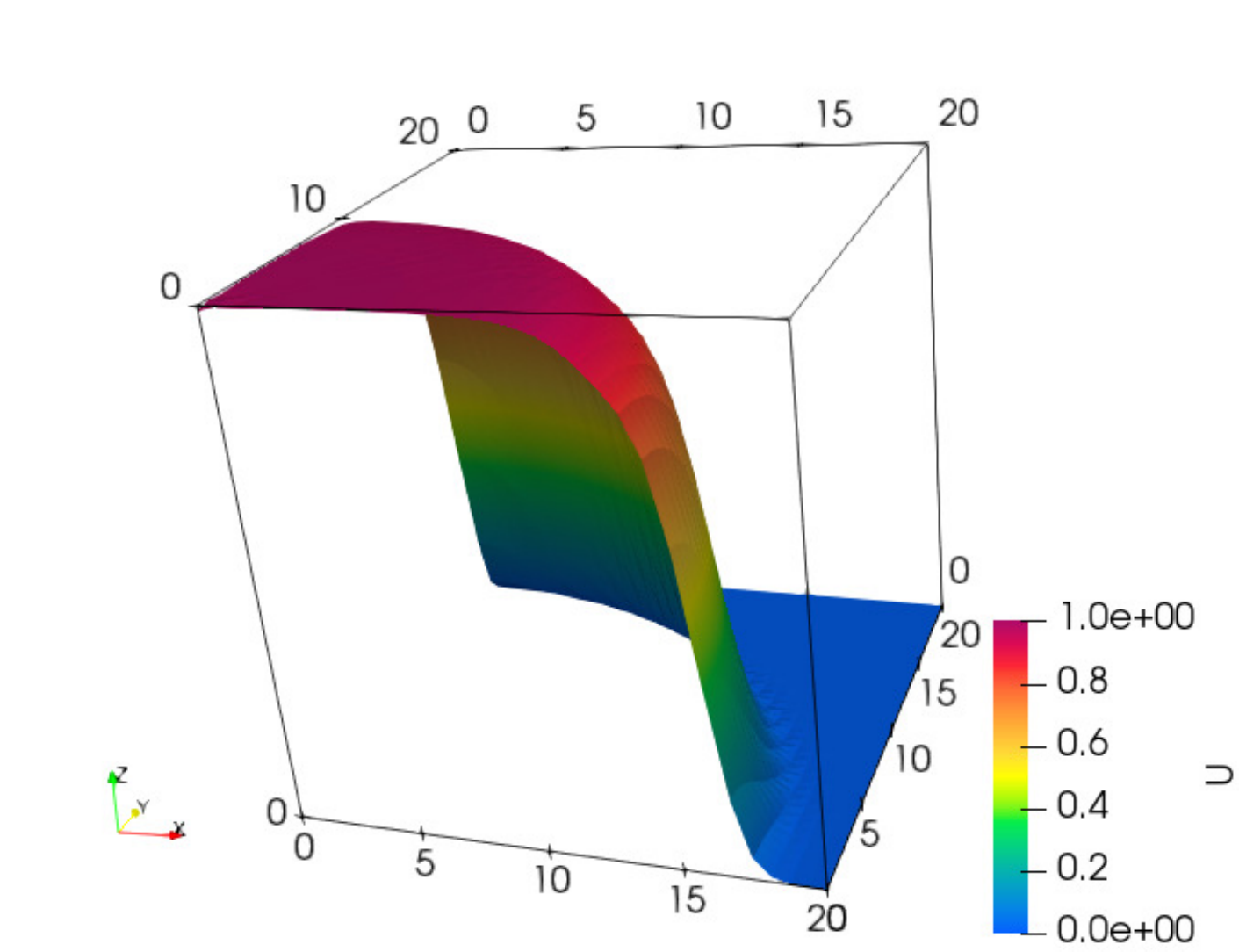}
		\caption{t = 20}
	\end{subfigure}
	\begin{subfigure}{0.24\textwidth}
		\includegraphics[width=\textwidth]{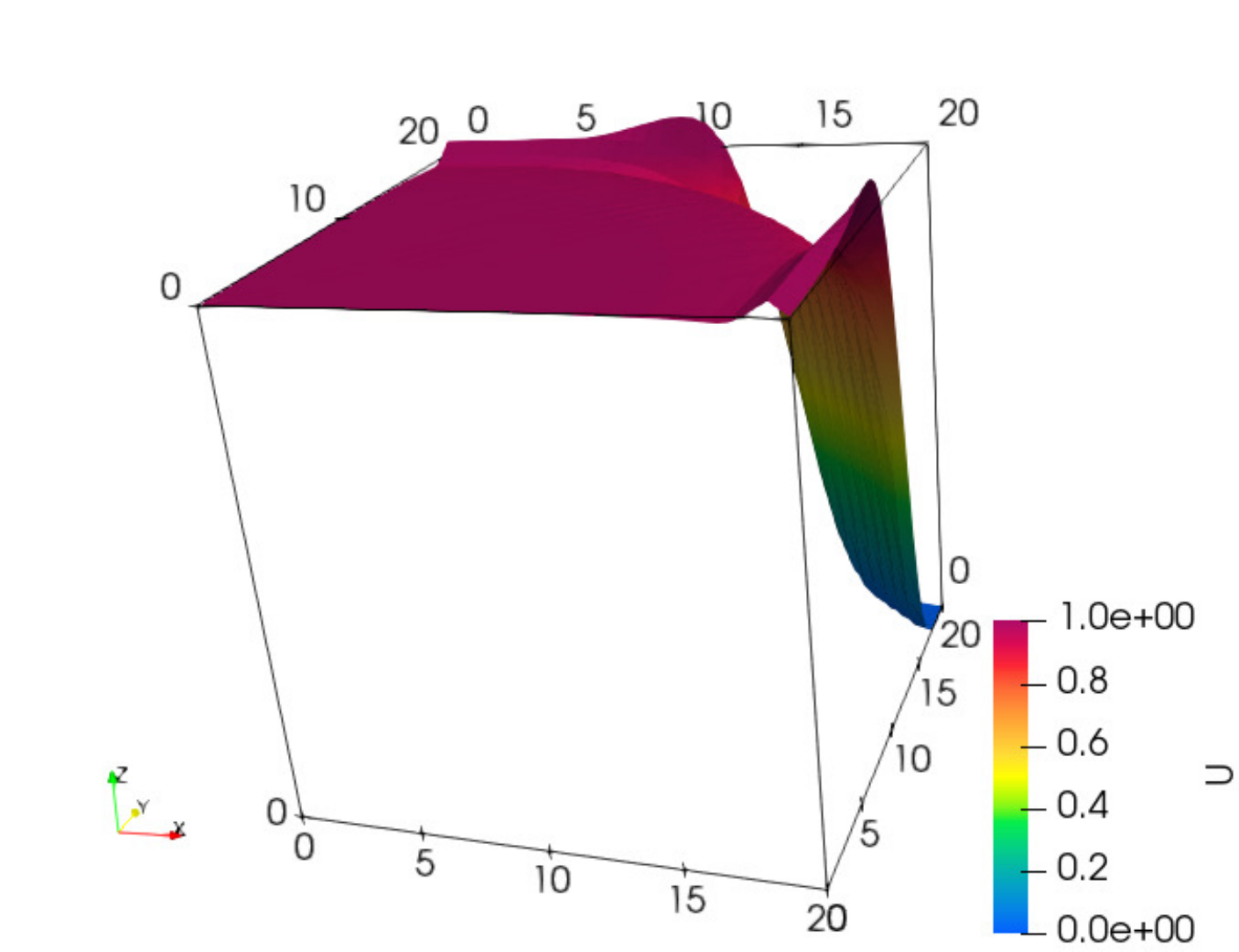}
		\caption{t = 30}
	\end{subfigure}
	\caption{
	Cancer cell invasion $u$ at different time instants $t=0, 10, 20, 30$,
obtained with the FEM-FCT scheme for $\alpha =1000$, $\mu = 1$ and $\chi=1$.
	 }
	\label{fig7}
\end{figure}

\begin{figure}[H]
	\centering
	\begin{subfigure}{0.24\textwidth}
		\includegraphics[width=\textwidth]{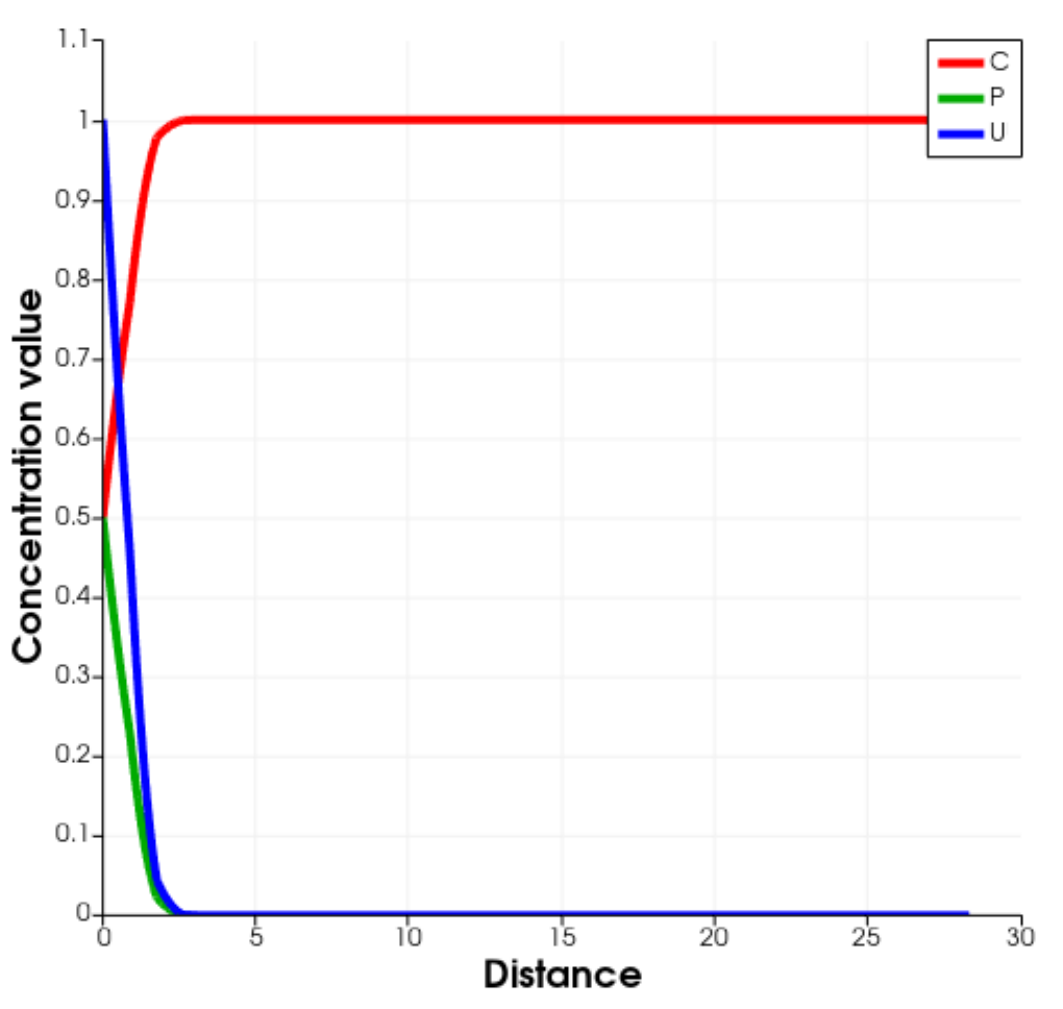}
		\caption{t = 0}
	\end{subfigure}
	\begin{subfigure}{0.24\textwidth}
		\includegraphics[width=\textwidth]{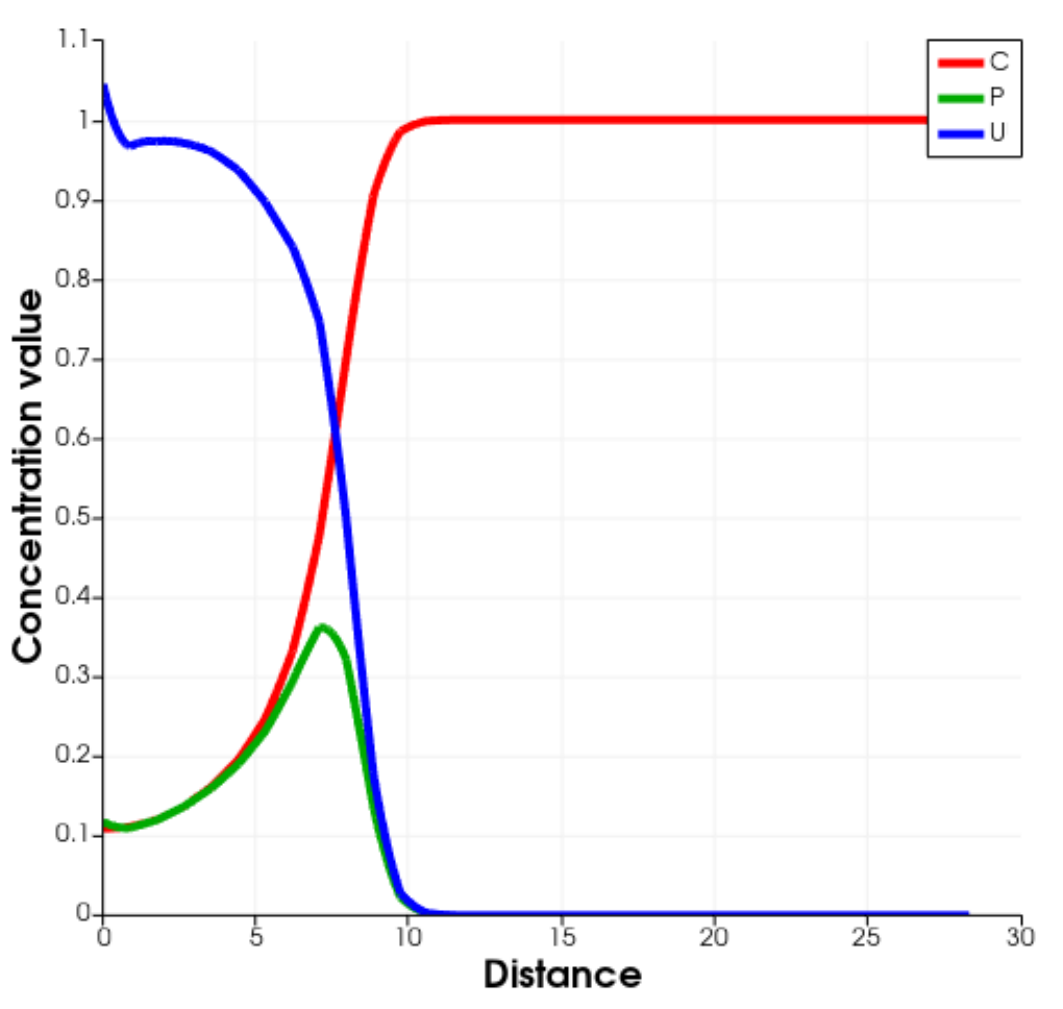}
		\caption{t = 10}
	\end{subfigure}
	\begin{subfigure}{0.24\textwidth}
		\includegraphics[width=\textwidth]{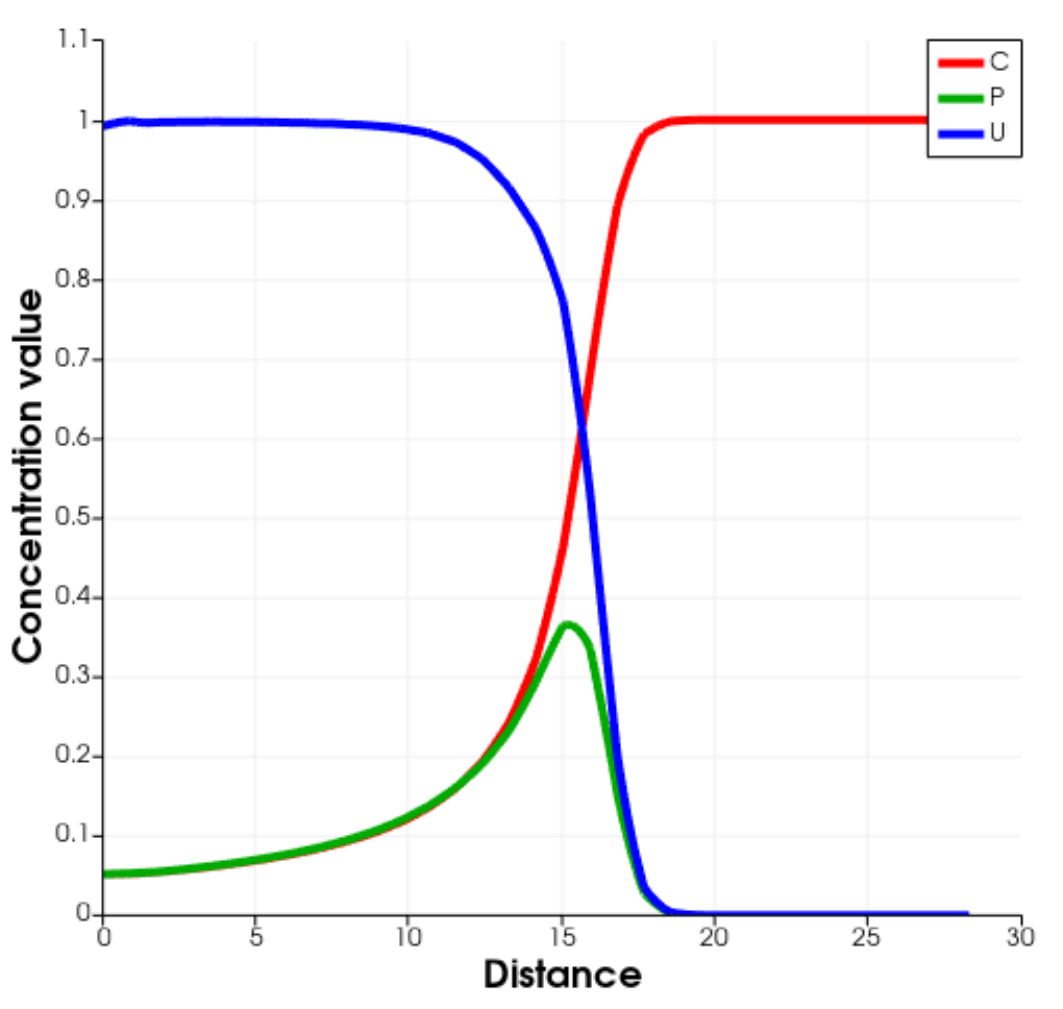}
		\caption{t = 20}
	\end{subfigure}
	\begin{subfigure}{0.24\textwidth}
		\includegraphics[width=\textwidth]{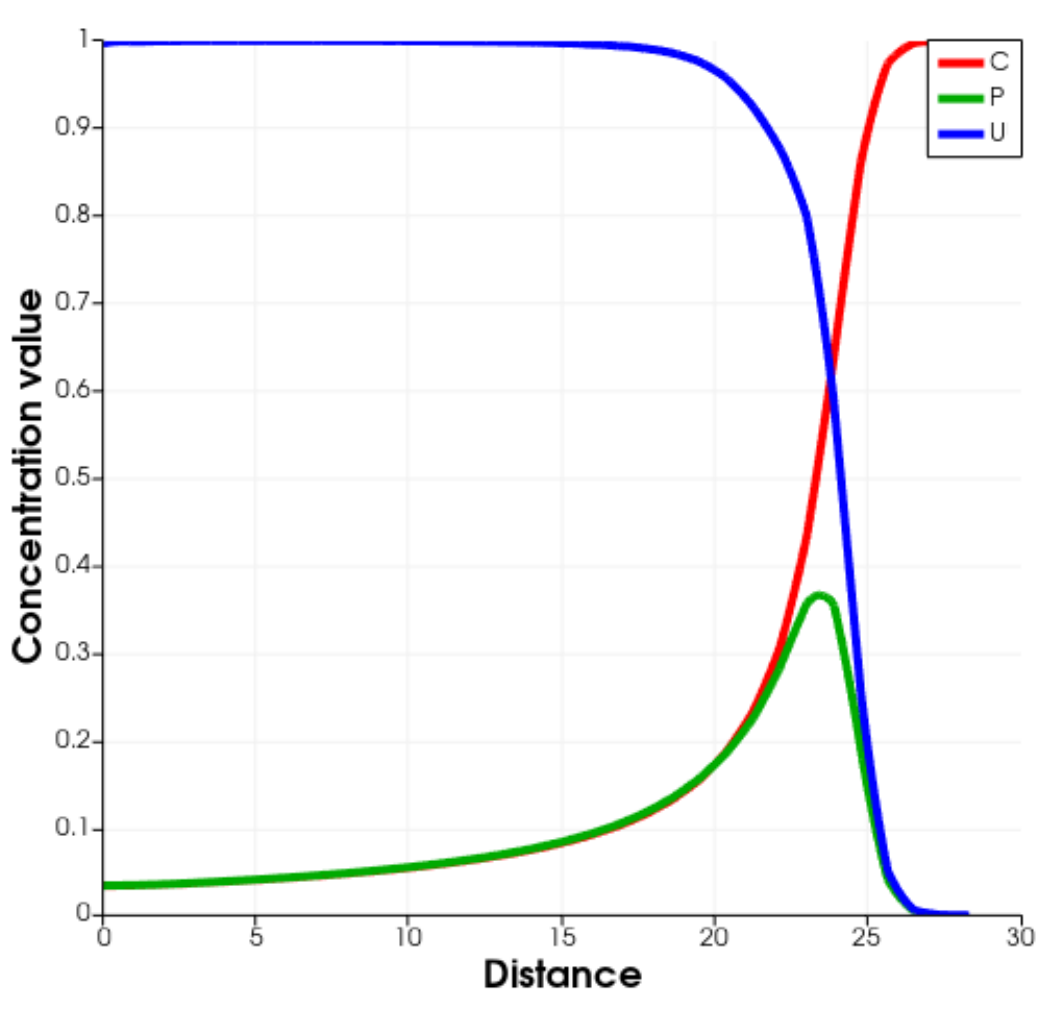}
		\caption{t = 30}
	\end{subfigure}
	\caption{
	Cancer cell invasion $u$, connective tissue $c$, and protease $p$ at
different time instants $t= 0, 10, 20, 30$, obtained with the FEM-FCT scheme
for $\alpha =1000$, $\mu = 1$ and $\chi=1$.
	 }
	\label{fig8}
\end{figure} 

\subsection{The FEM-FCT scheme in absence of diffusion for $\chi=1$, $\mu=1$}
In this section, we consider the case without the diffusion term, i.e.,
utilizing \eqref{eq1} now, and again set $\chi=\mu=1$. This case was studied in
\cite{chapwanya2014positivity, khalsaraei2016positivity}, where the authors
applied a nonstandard finite difference (NSFD) scheme using Mickens rules. The
proposed methods were successful in comparison to standard finite difference
methods at obtaining positive solutions, however, some wiggles still remained
in the vicinity of the front layer. On the other hand, deriving an efficient
NSFD scheme heavily depends on the type of the system and the discretization of
different terms. Therefore, in this work, we applied the FEM-FCT methodology to
remove the oscillations in the front layer while keeping the solutions positive
at all times, see Figs.~\ref{fig9} and \ref{fig10}. Next,
we check numerically whether the approximate solutions converge. To this end, 
we computed the integrals of the solutions at the final time $t=50$ for
different numbers of global refinements, see Table~\ref{tab2}.
The results correspond to the situation where the tumor is completely malignant 
and invades the whole extracellular matrix. 
In Table~\ref{tab3}, we study the values of the solutions at the point
$(20,20)$, the differences between two consecutive iterative solutions, and the
numbers of fixed-point iterations for different time steps. In particular, we
observe that the proposed scheme is convergent with respect to the time step 
size. The convergence of the cancer cell invasion $u$ with respect to the time
step and the mesh width at two different time instants is also studied in
Fig.~\ref{fig11} by means of solution graphs along the line $y=x$.

\begin{figure}[H]
	\centering
	\begin{subfigure}{0.24\textwidth}
		\includegraphics[width=\textwidth]{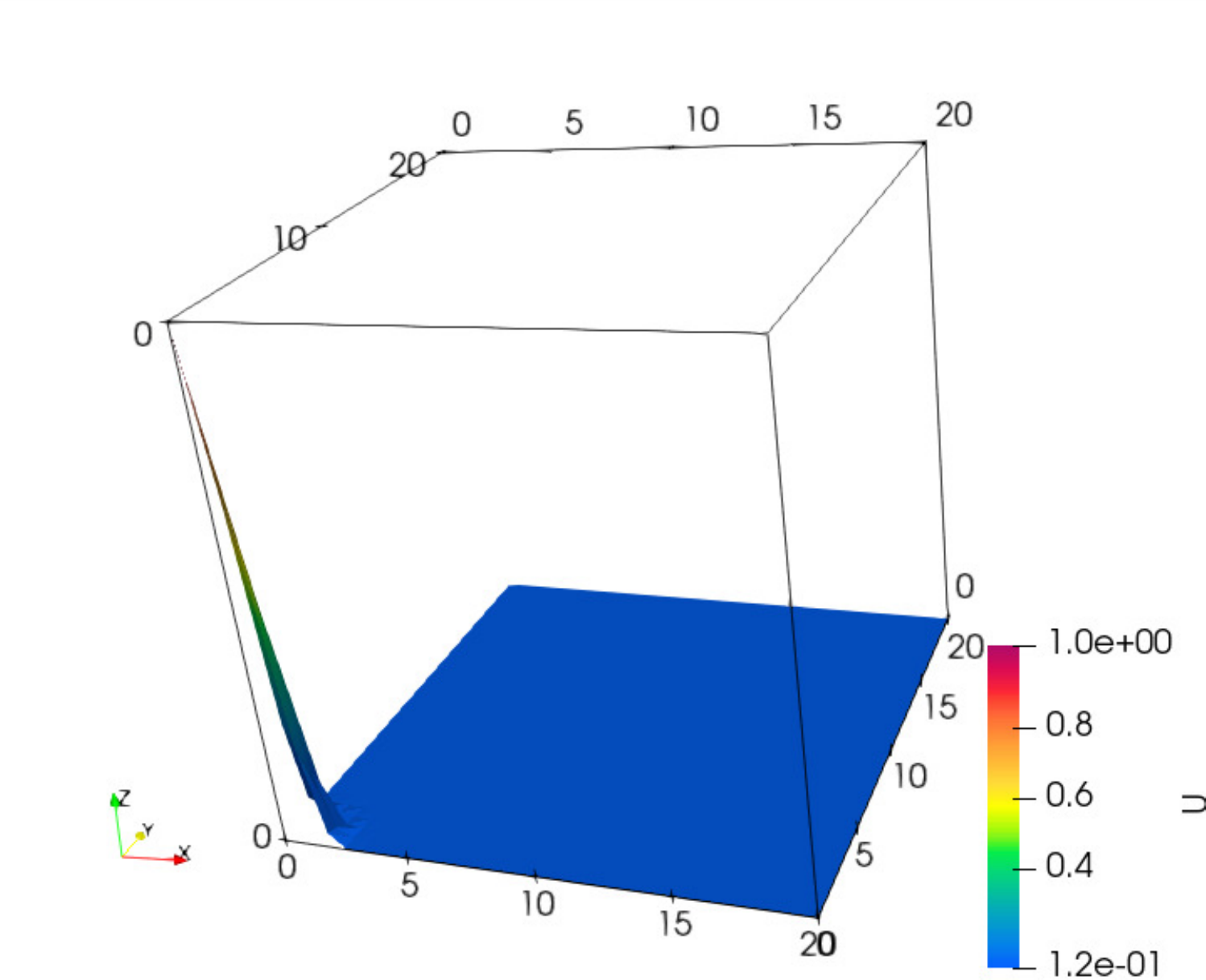}
		\caption{t = 0}
	\end{subfigure}
	\begin{subfigure}{0.24\textwidth}
		\includegraphics[width=\textwidth]{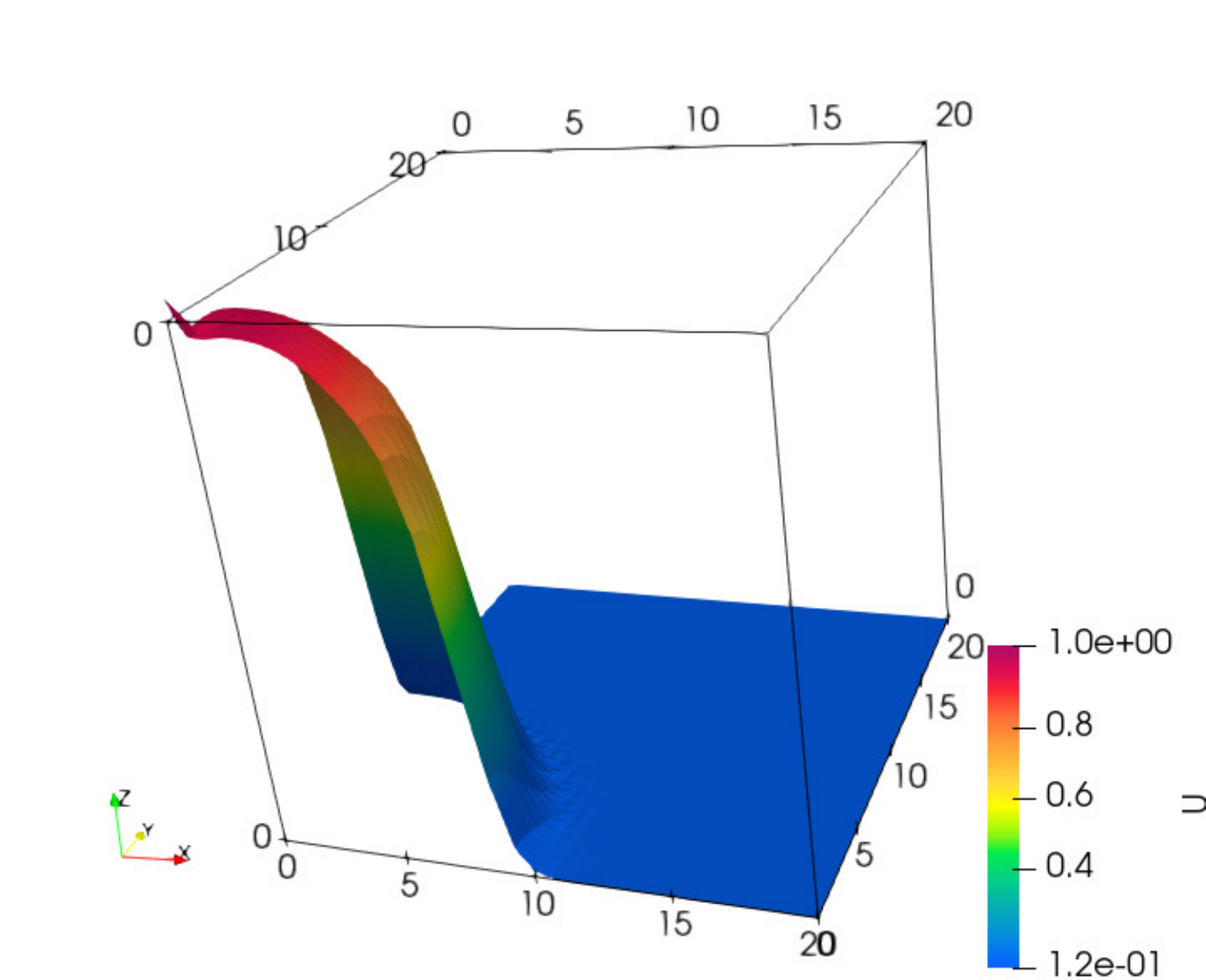}
		\caption{t = 10}
	\end{subfigure}
	\begin{subfigure}{0.24\textwidth}
		\includegraphics[width=\textwidth]{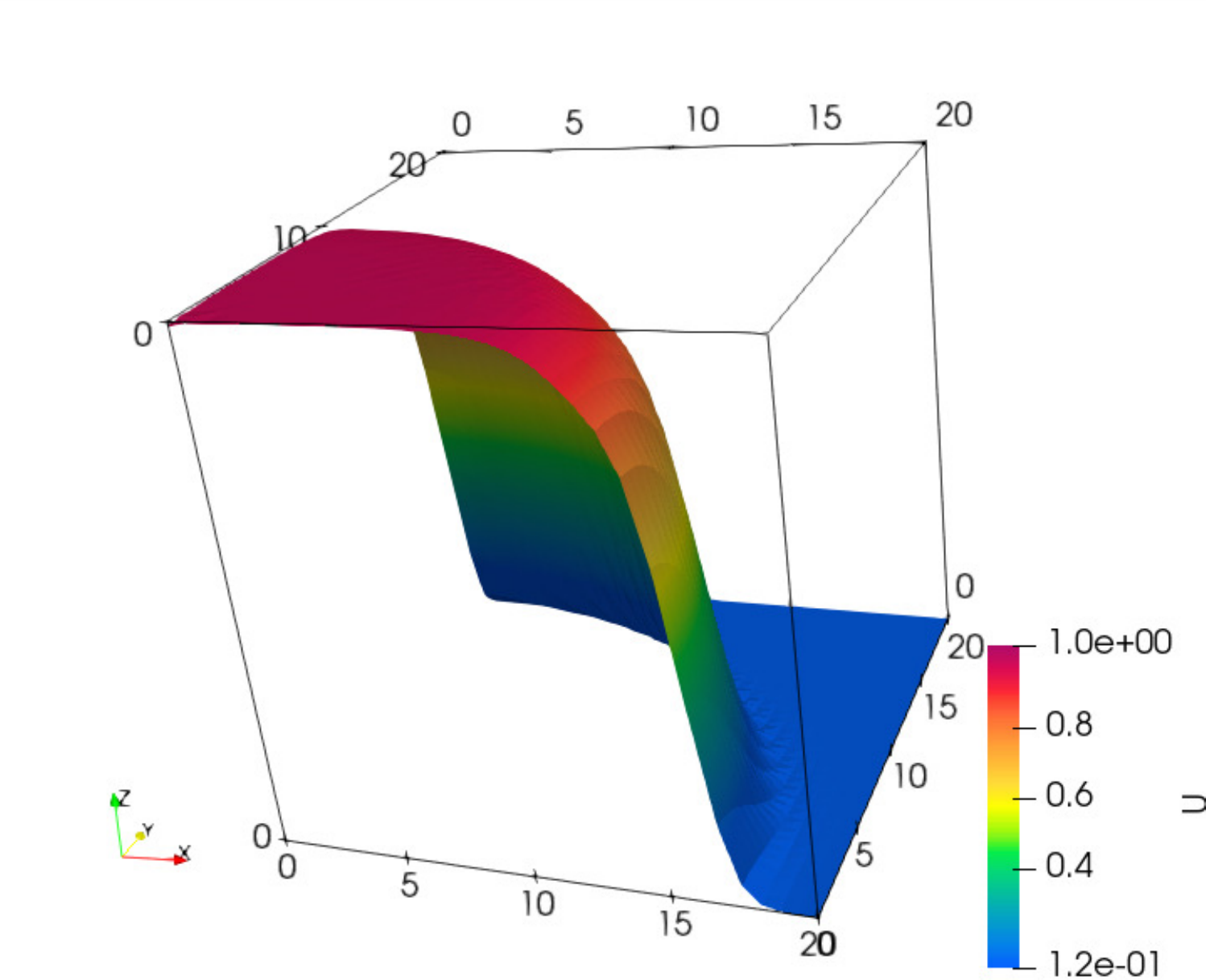}
		\caption{t = 20}
	\end{subfigure}
	\begin{subfigure}{0.24\textwidth}
		\includegraphics[width=\textwidth]{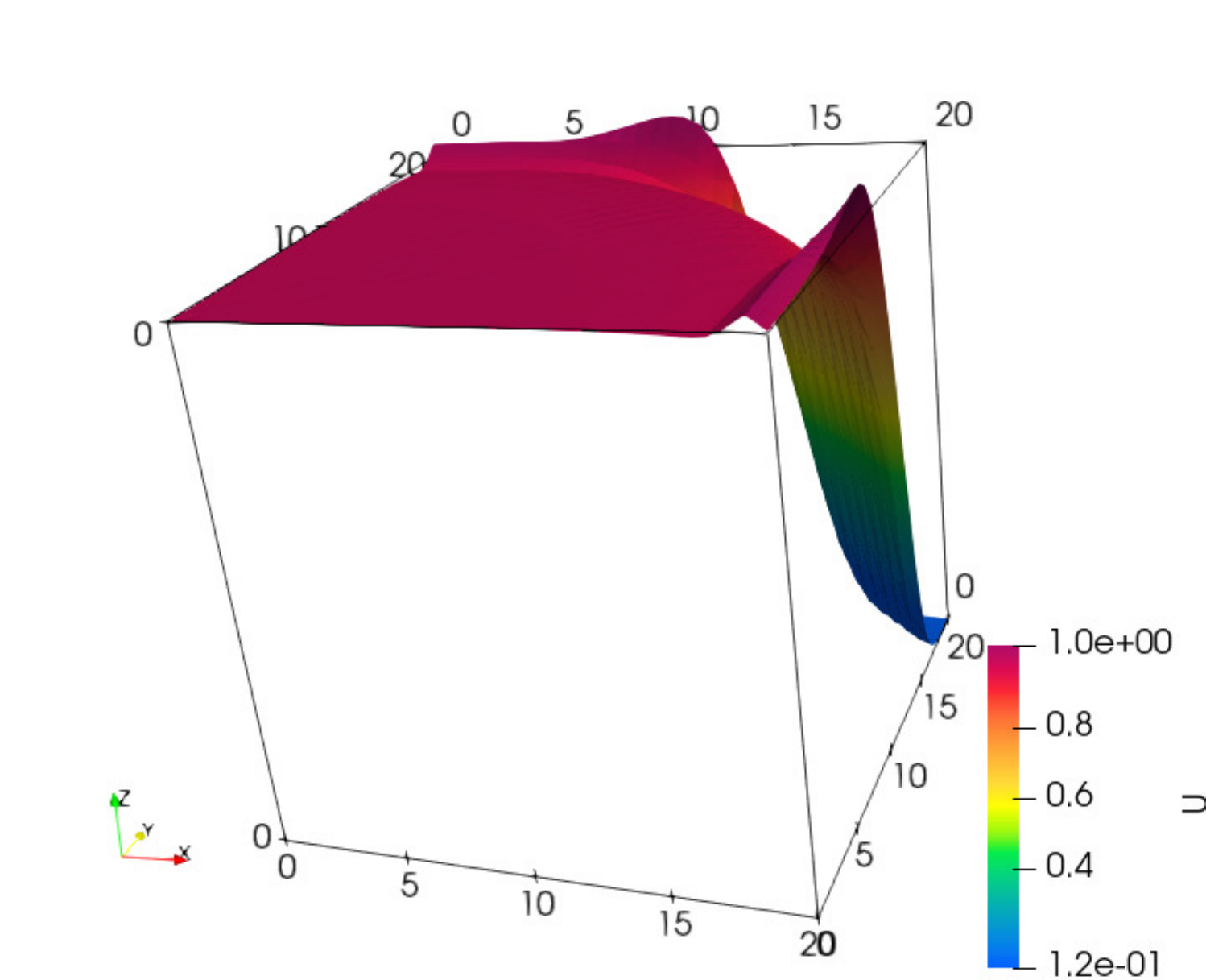}
		\caption{t = 30}
	\end{subfigure}
	\caption{
	Cancer cell invasion $u$ at different time instants $t= 0, 10, 20, 30$, obtained with the FEM-FCT scheme for $\mu = 1$ and $\chi=1$.
	 }
	\label{fig9}
\end{figure} 

\begin{figure}[H]
	\centering
	\begin{subfigure}{0.24\textwidth}
		\includegraphics[width=\textwidth]{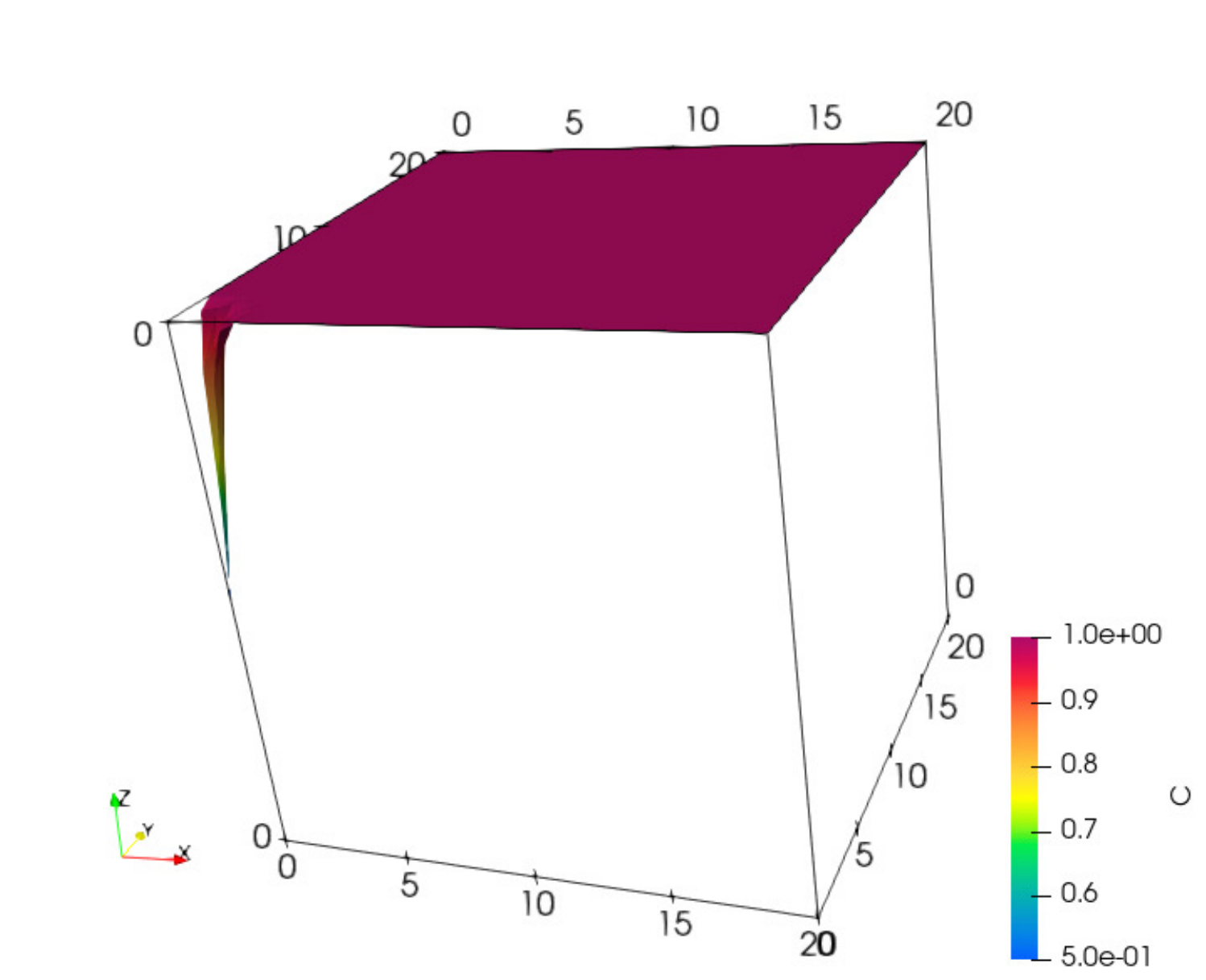}
		\caption{t = 0}
	\end{subfigure}
	\begin{subfigure}{0.24\textwidth}
		\includegraphics[width=\textwidth]{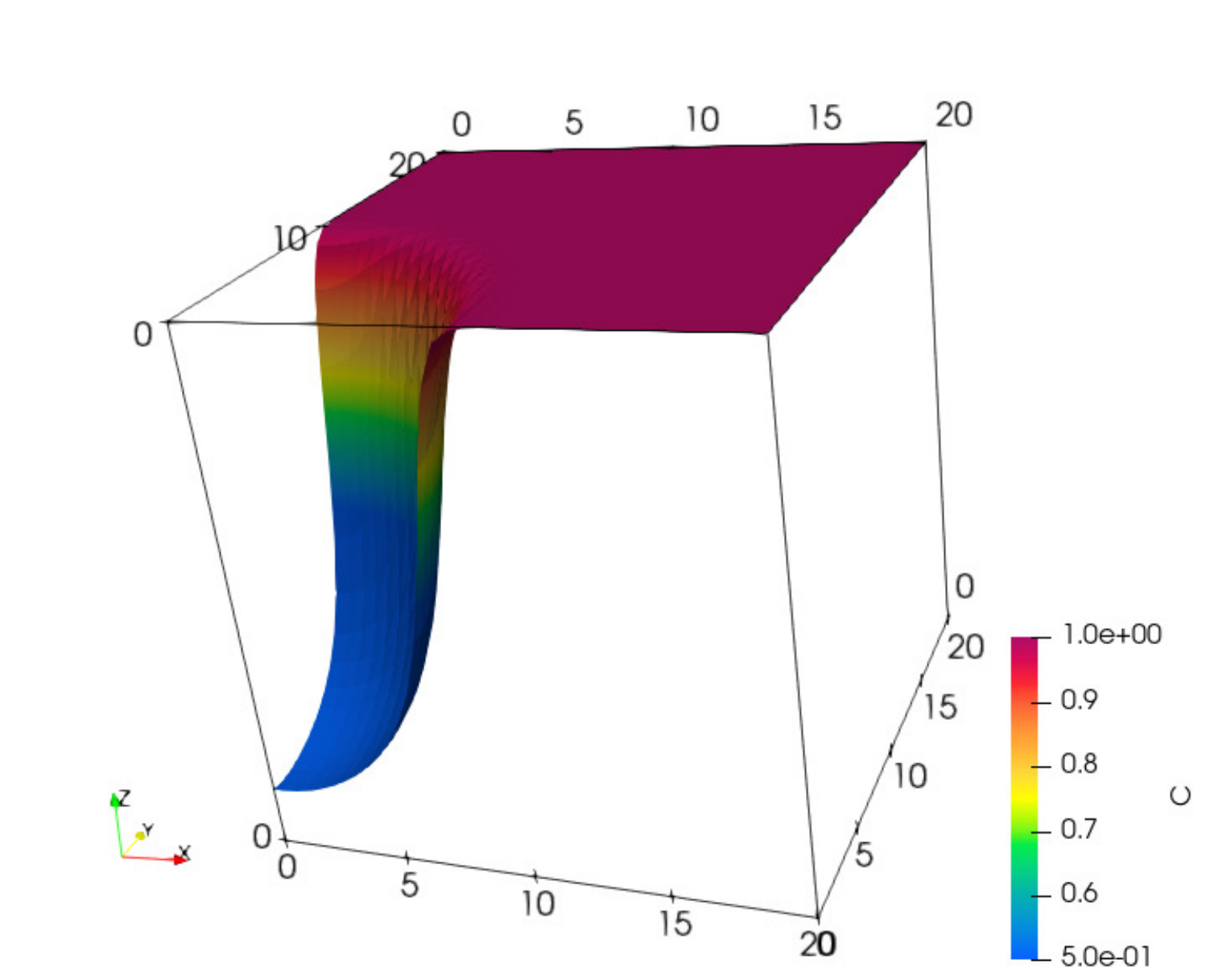}
		\caption{t = 10}
	\end{subfigure}
	\begin{subfigure}{0.24\textwidth}
		\includegraphics[width=\textwidth]{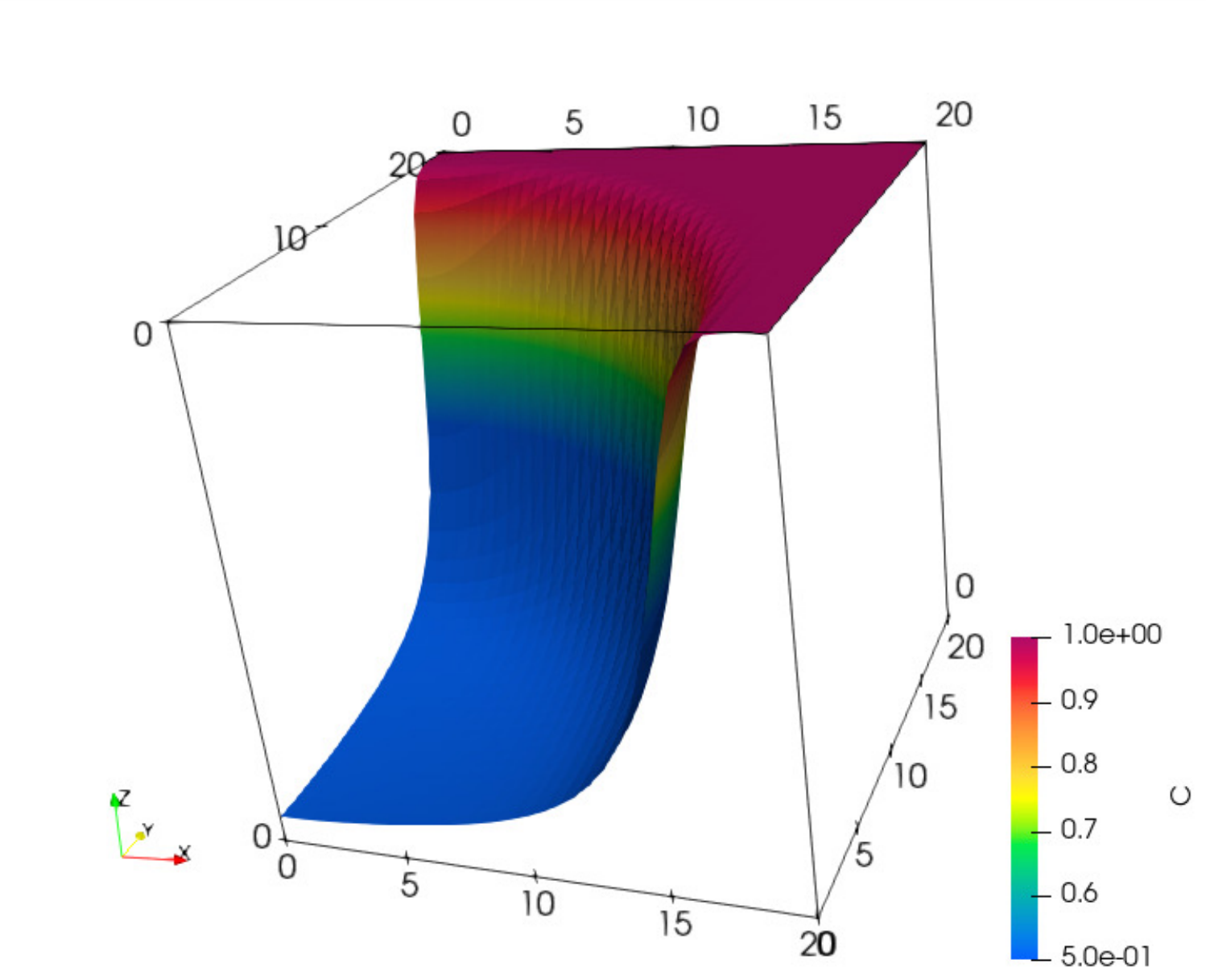}
		\caption{t = 20}
	\end{subfigure}
	\begin{subfigure}{0.24\textwidth}
		\includegraphics[width=\textwidth]{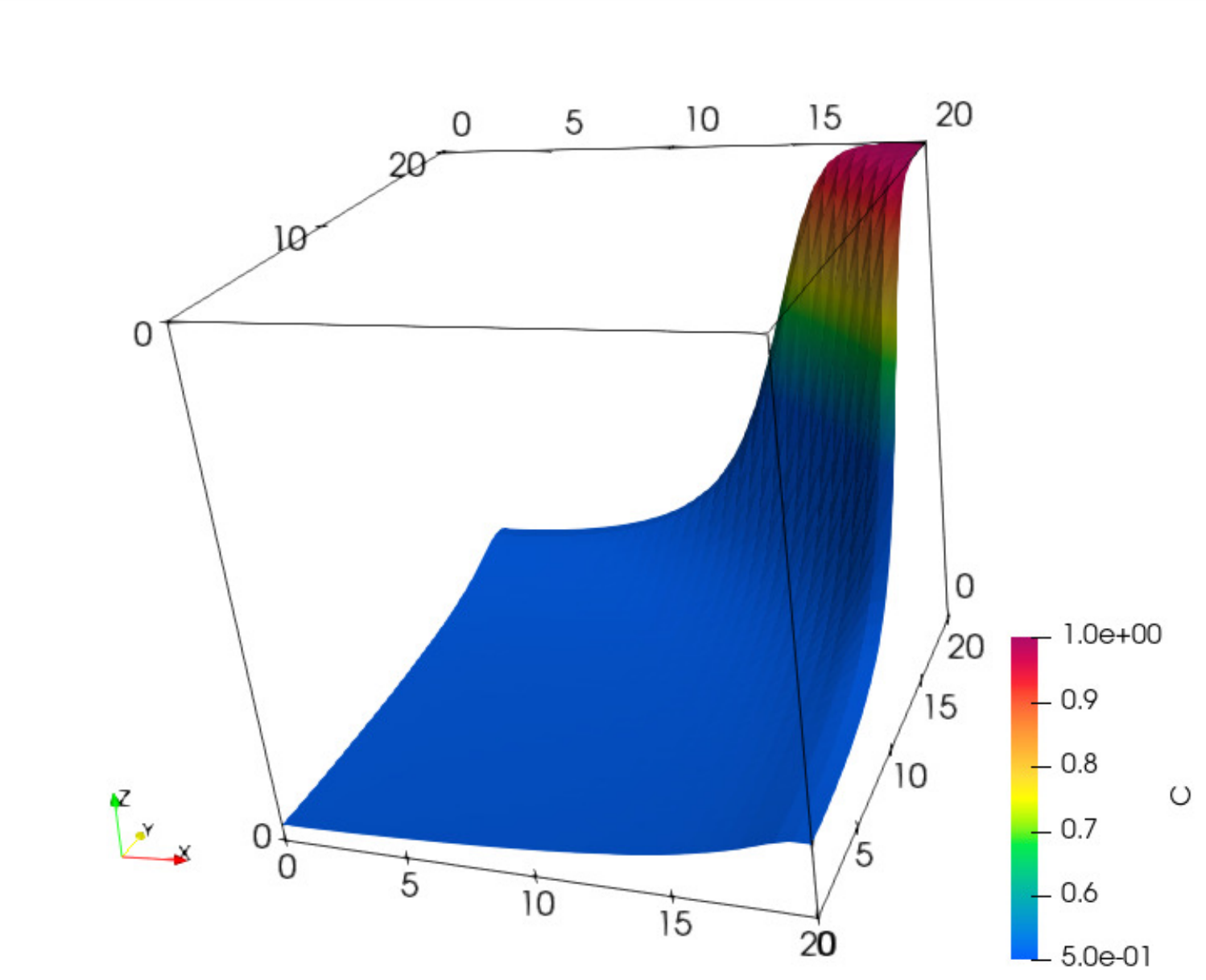}
		\caption{t = 30}
	\end{subfigure}
	\caption{
	Decay of the extracellular matrix $c$ at different time instants $t=0,
10, 20, 30$, computed using the FEM-FCT scheme for $\mu = 1$ and $\chi=1$.
	 }
	\label{fig10}
\end{figure} 

\begin{table}[h!]
\centering
\caption{Convergence of the mean values with respect to global mesh refinement 
at the last time instant $t=50$.}
 \begin{tabular}{|c c c c c c |} 
 \hline
 \# of refinements & 3 & 4 & 5 & 6 & 7 \\ 
 \hline
 \# DOF & 81 & 289 & 1089 & 4225 & 16641 \\  [0.5ex] 
 \hline\hline
$\int_{\Omega} c_h(x)  \,\mathrm{d}x$ & 0.02362193  & 0.02670467 & 0.03284535 & 0.03042843 & 0.03680451\\ 
$\int_{\Omega} p_h(x) \,\mathrm{d}x$& 0.02373726  & 0.02685417 & 0.03308441 & 0.03062835 & 0.03712137\\
$\int_{\Omega} u_h(x) \,\mathrm{d}x$& 0.99999999  & 0.99999998  & 0.99999976  & 0.99999943 & 0.99999889 \\  [0.5ex] 
 \hline
 \end{tabular}
 \label{tab2}
\end{table}

\begin{table}[h!]
\centering
\caption{Convergence of the solutions at the point $(20,20)$ 
with respect to the time step $\tau$ and convergence of the fixed-point 
iterations, both at the last time instant $t=50$. }
    \begin{tabular} { | c | c c c c c c c | }
    \hline
    $\tau$ & $c^{n+1}_{k}$ & $\Vert c^{n+1}_{k} - c^{n+1}_{k-1}\Vert$ &
$p^{n+1}_{k}$ & $\Vert p^{n+1}_{k} - p^{n+1}_{k-1}\Vert$ & $u^{n+1}_{k}$ & $\Vert
u^{n+1}_{k} - u^{n+1}_{k-1}\Vert$ &  \# Iterations   \\
    \hline
    1.0    & 0.0333129 & 4.6407721e-09 & 0.0335600 & 5.0321162e-09 & 1.0004557 & 6.9181459e-11 & 21 \\
    0.1    & 0.0388396 & 5.5772037e-09 & 0.0391805 & 5.7753161e-09 & 1.0007926 & 4.4710309e-11 & 18\\
    0.01   & 0.0387508 & 9.0822211e-09 & 0.0390900 & 9.2197598e-09 & 1.0007004 & 1.4400819e-10 & 14\\
    0.001  & 0.0387514 & 7.2775629e-09 & 0.0390907 & 7.3738174e-09 & 1.0007904 & 1.2164834e-10 & 11\\
    0.0001 & 0.0387516 & 5.8227737e-09 & 0.0390908 & 5.8989104e-09 & 1.0007904 & 9.7728734e-11 & 8\\
    \hline
    \end{tabular}
       \label{tab3}
\end{table}

\begin{figure}[H]
	\centering
	\begin{subfigure}{0.24\textwidth}
		\includegraphics[width=\textwidth]{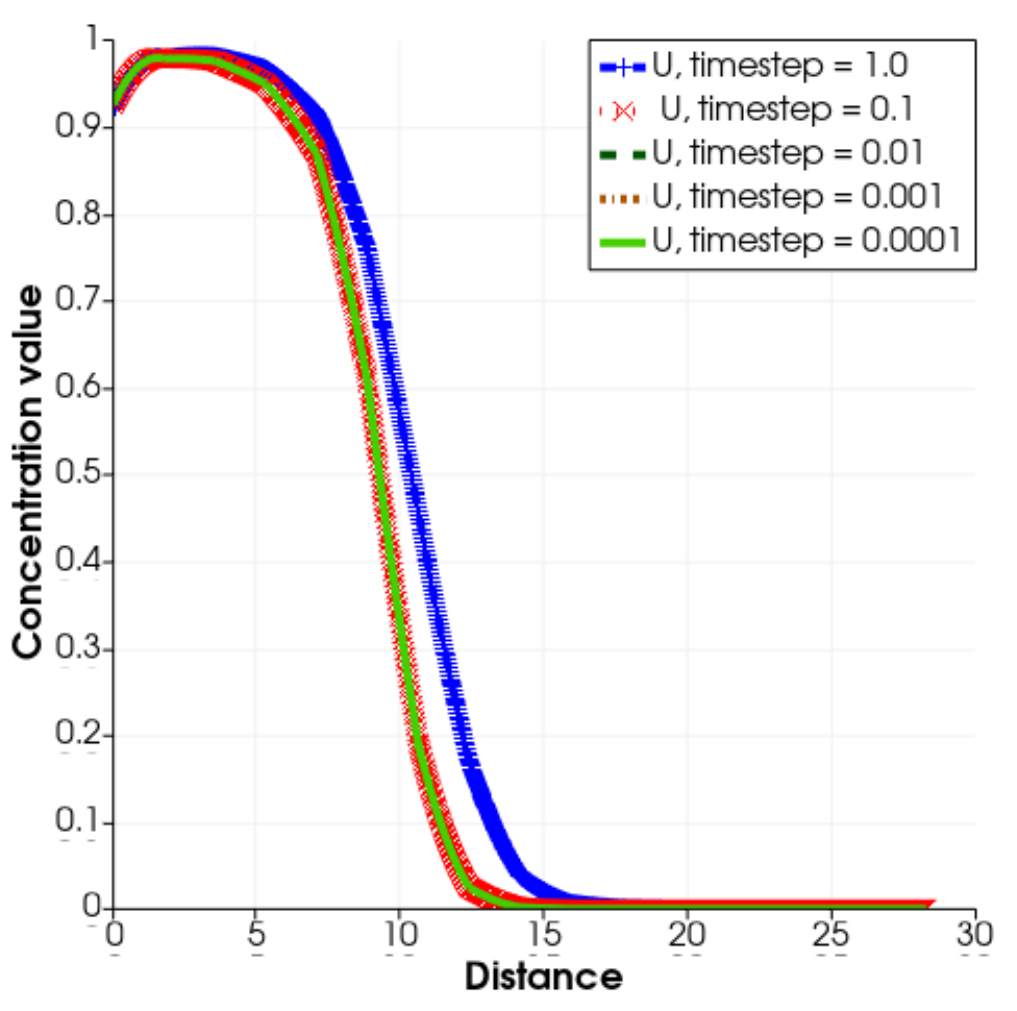}
		\caption{t = 10}
	\end{subfigure}
	\begin{subfigure}{0.24\textwidth}
		\includegraphics[width=\textwidth]{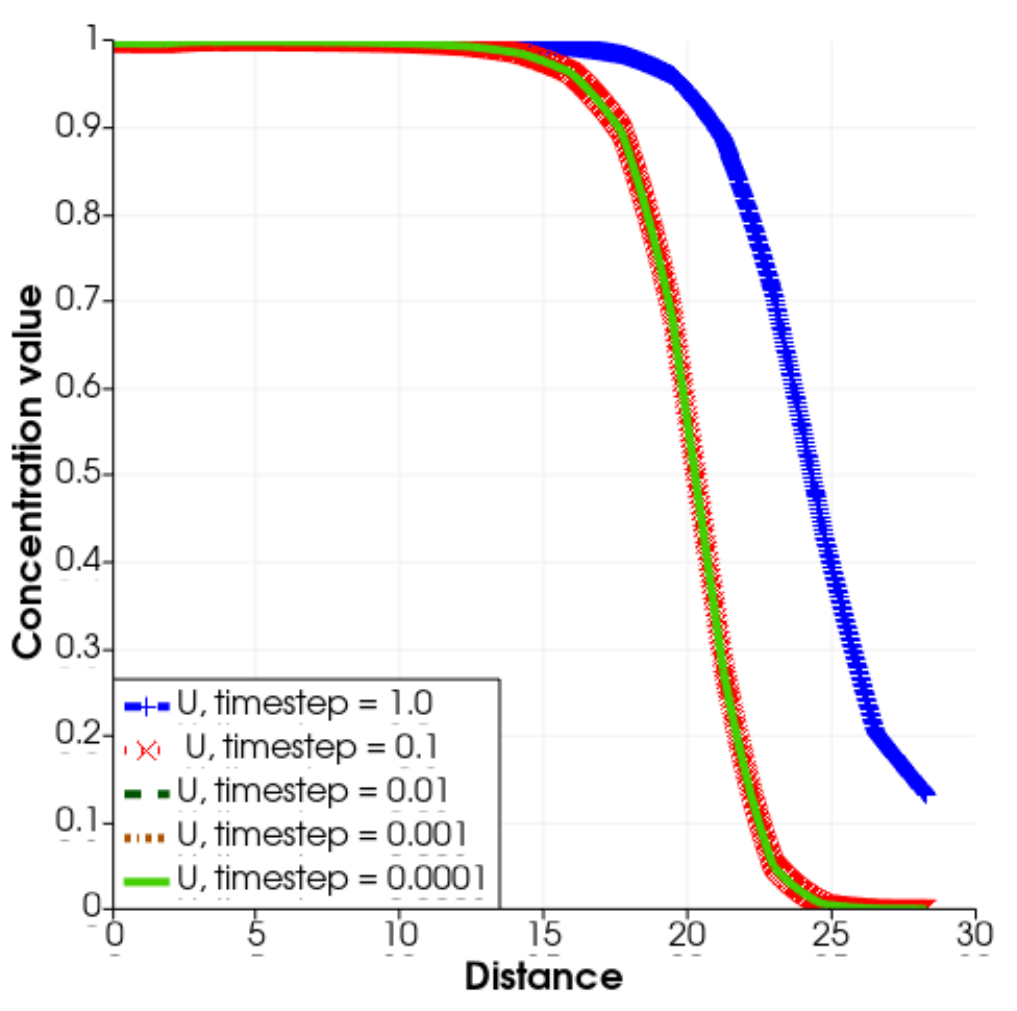}
		\caption{t = 20}
	\end{subfigure}
	\begin{subfigure}{0.24\textwidth}
		\includegraphics[width=\textwidth]{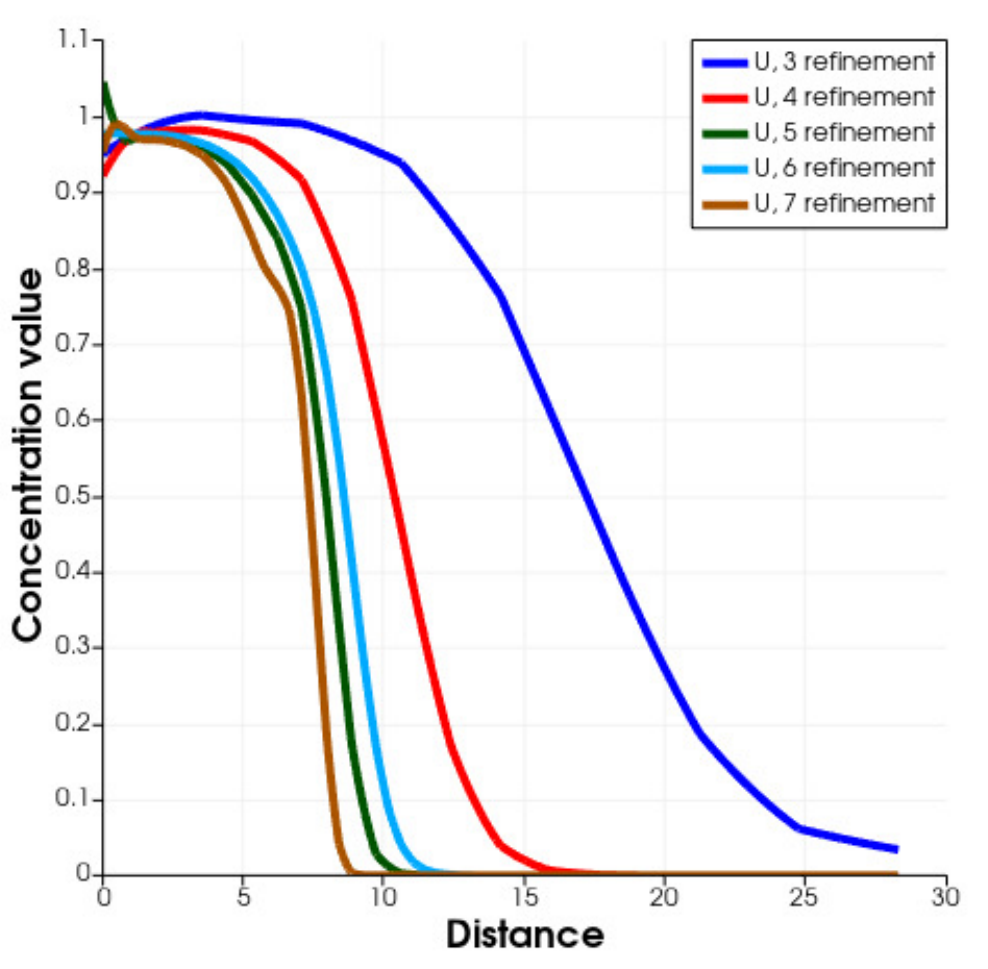}
		\caption{t = 10}
	\end{subfigure}
	\begin{subfigure}{0.24\textwidth}
		\includegraphics[width=\textwidth]{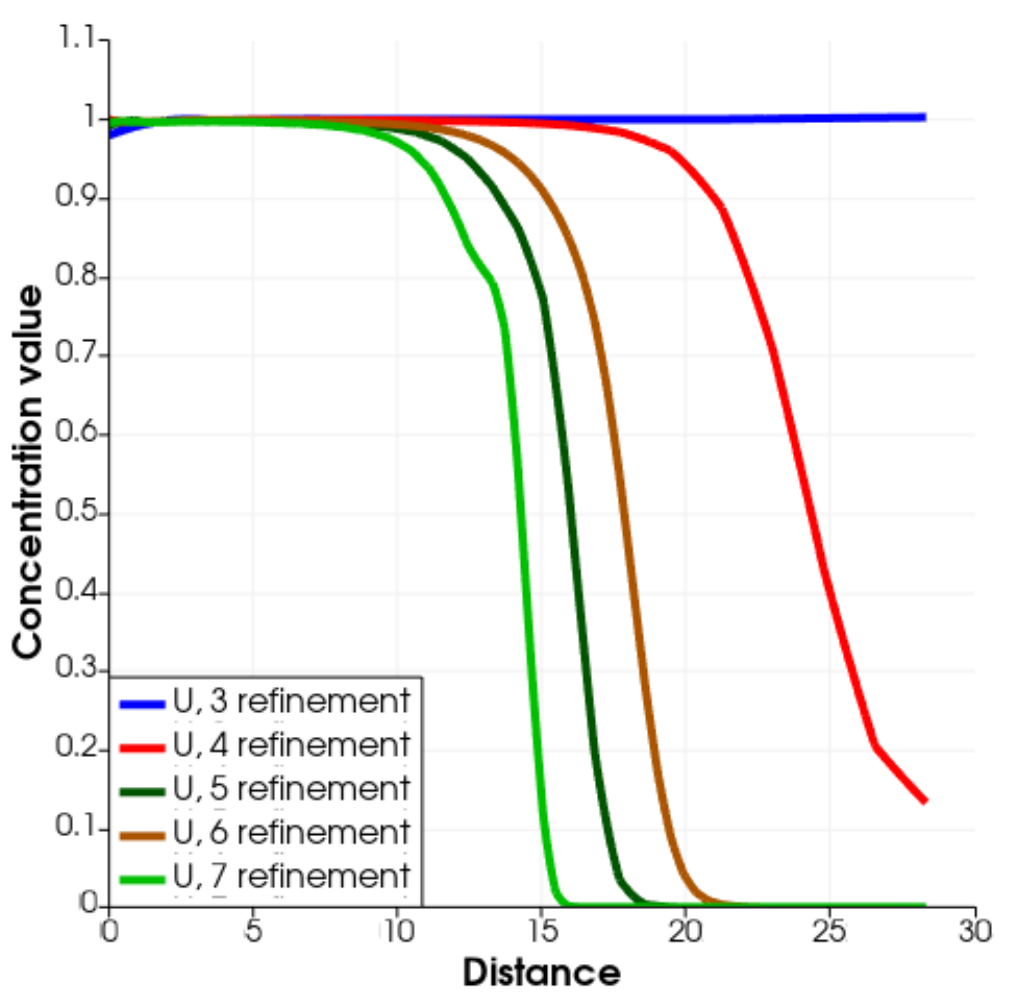}
		\caption{t = 20}
	\end{subfigure}
	\caption{
	Cancer cell invasion $u$ computed using the FEM-FCT scheme at time
instants $t=10$ and $t=20$ for different time steps (first two pictures) and
for different numbers of global refinements (last two pictures).
	 }
	\label{fig11}
\end{figure} 

 \subsection{Effect of haptotactic domination}\label{sec:hapto}
In this section, we investigate the effect of directional movement of cancer
cells inside the domain. This is a very important property in cancer
modeling which can lead to metastasis. In metastasis, the cancer cells are
moving to the other parts of the body and start proliferate, forming a new tumor
in the new part, and invade the surrounding  tissues. In this case, it is very
difficult to detect the location of cancerous cells  and this is one of the
predominant causes of most deaths due to cancer. In the following, we only study
a very simple case of haptotactic dominating mechanism of the cancer cell
motion. In addition to the absence of the diffusion effect in the system, there 
is only a small amount of the proliferation rate: we set $\mu = 0.0001$ and 
$\chi =1$ in the computations. As a result of the haptotactic migration
domination, a small cluster of cancer cells builds up at the beginning and this
initial amount is expected to move along the direction of the gradient of the
extracellular matrix. As Fig.~\ref{fig12} indicates, the numerical simulation
by the standard Galerkin FEM breaks down in a very short amount of time
after the time instant $t=15$. Next, we apply the FEM-FCT scheme and the
low-order method, see Figs.~\ref{fig14} and \ref{fig13}, respectively. We 
observe that, in both cases, the stabilization prevents the blow-up in the 
system and leads to non-negative solutions. However, some oscillations still 
remain in the interior layer. These oscillations could be suppressed by
adaptive mesh refinement, which is however out of the scope of this paper.
As expected, the low-order method provides a more diffusive solution than the
FEM-FCT scheme. It is interesting that, combining the FEM-FCT scheme with the 
backward Euler method ($\theta=1$), oscillation-free solutions are obtained,
see Fig.~\ref{fig16}. 

 \begin{figure}[H]
	\begin{subfigure}{0.24\textwidth}
		\includegraphics[width=\textwidth]{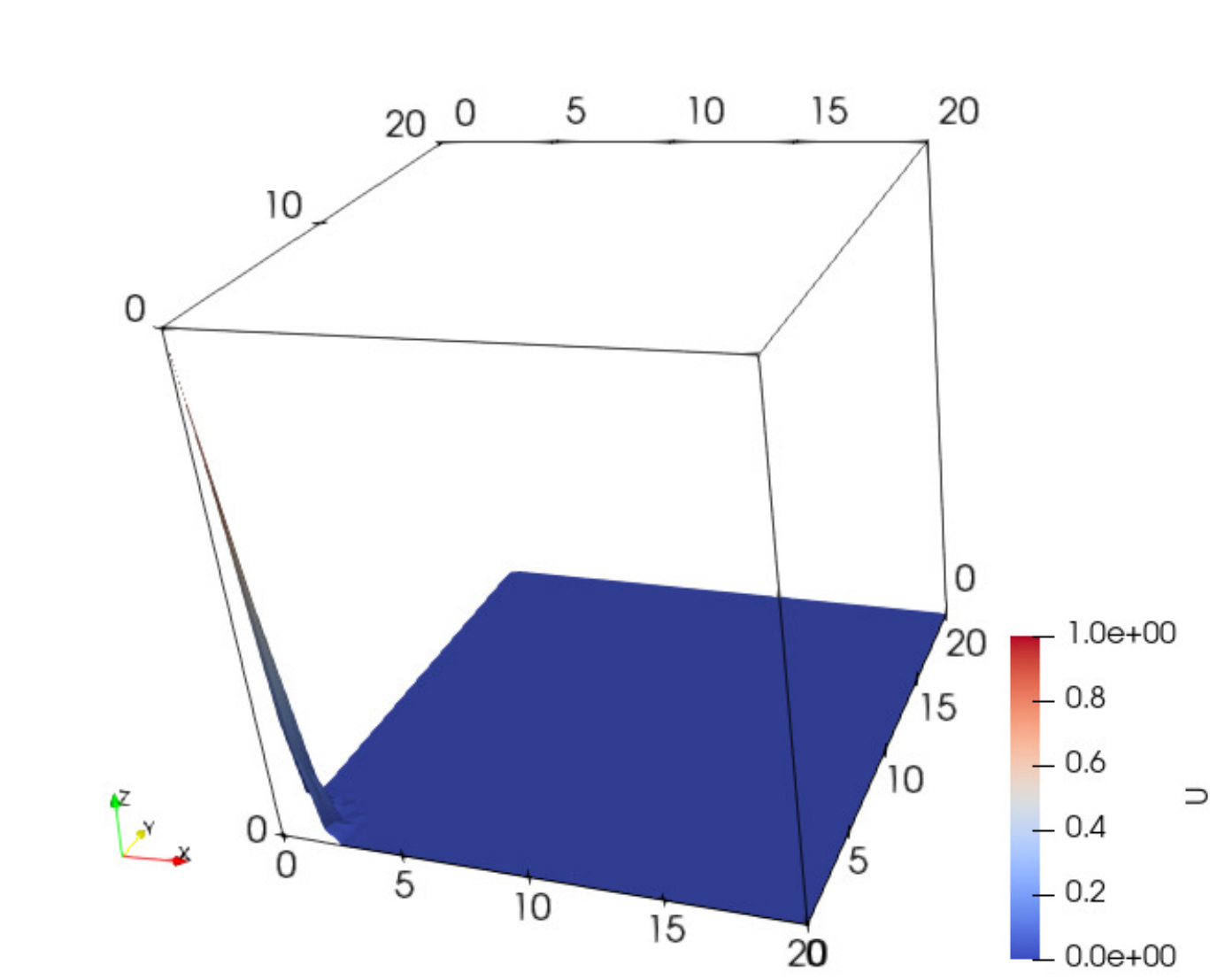}
		\caption{t = 0}
	\end{subfigure}
	\centering
	\begin{subfigure}{0.24\textwidth}
		\includegraphics[width=\textwidth]{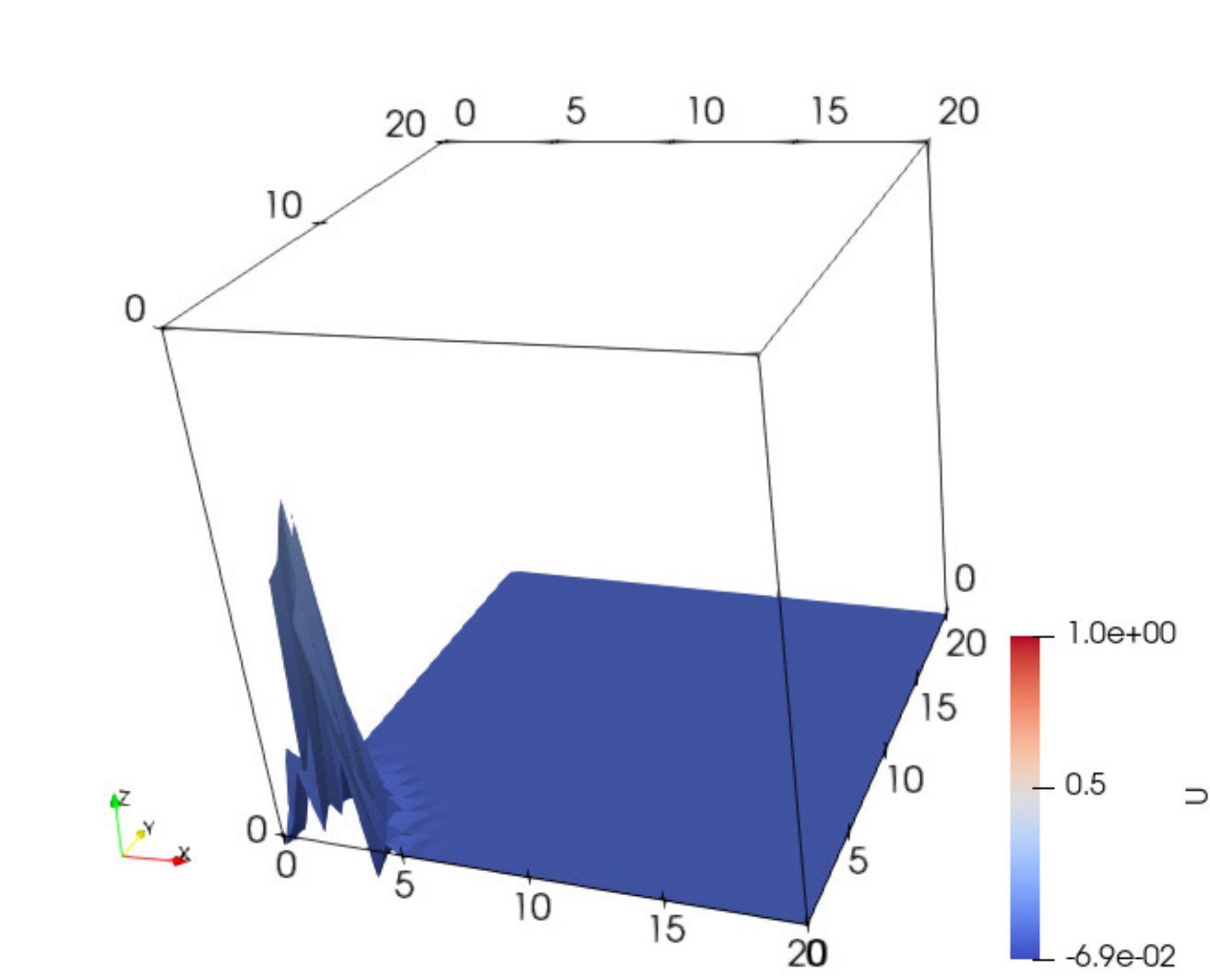}
		\caption{t = 5}
	\end{subfigure}
	\begin{subfigure}{0.24\textwidth}
		\includegraphics[width=\textwidth]{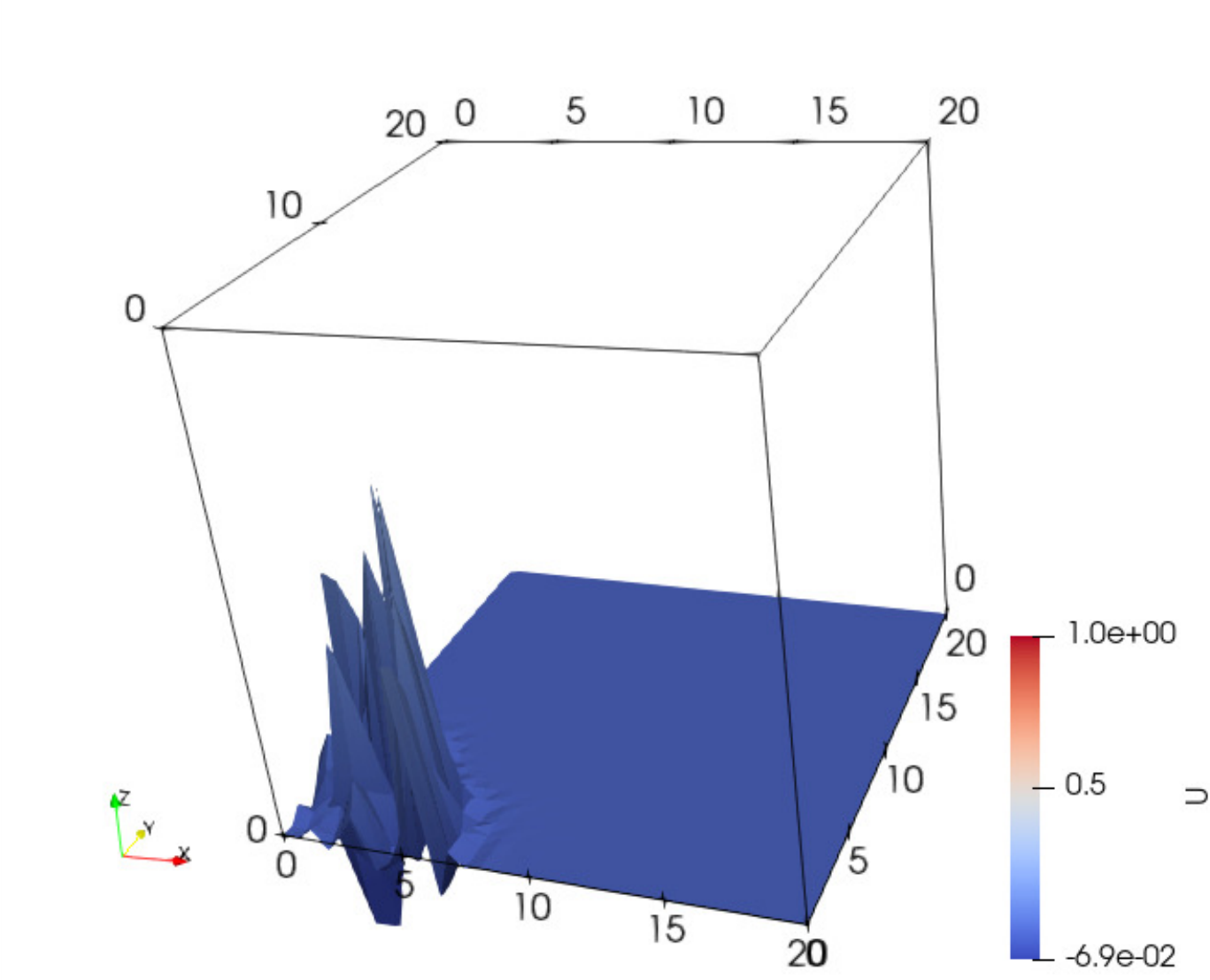}
		\caption{t = 15}
	\end{subfigure}
	\caption{
	The effect of the haptotactic rate on the cancer cell invasion $u$ at
different time instants $t= 0, 5, 15$, computed using the standard Galerkin
FEM for $\mu = 0.0001$ and $\chi=1$.
	 }
	\label{fig12}
\end{figure}  

 \begin{figure}[H]
	\centering
	\begin{subfigure}{0.24\textwidth}
		\includegraphics[width=\textwidth]{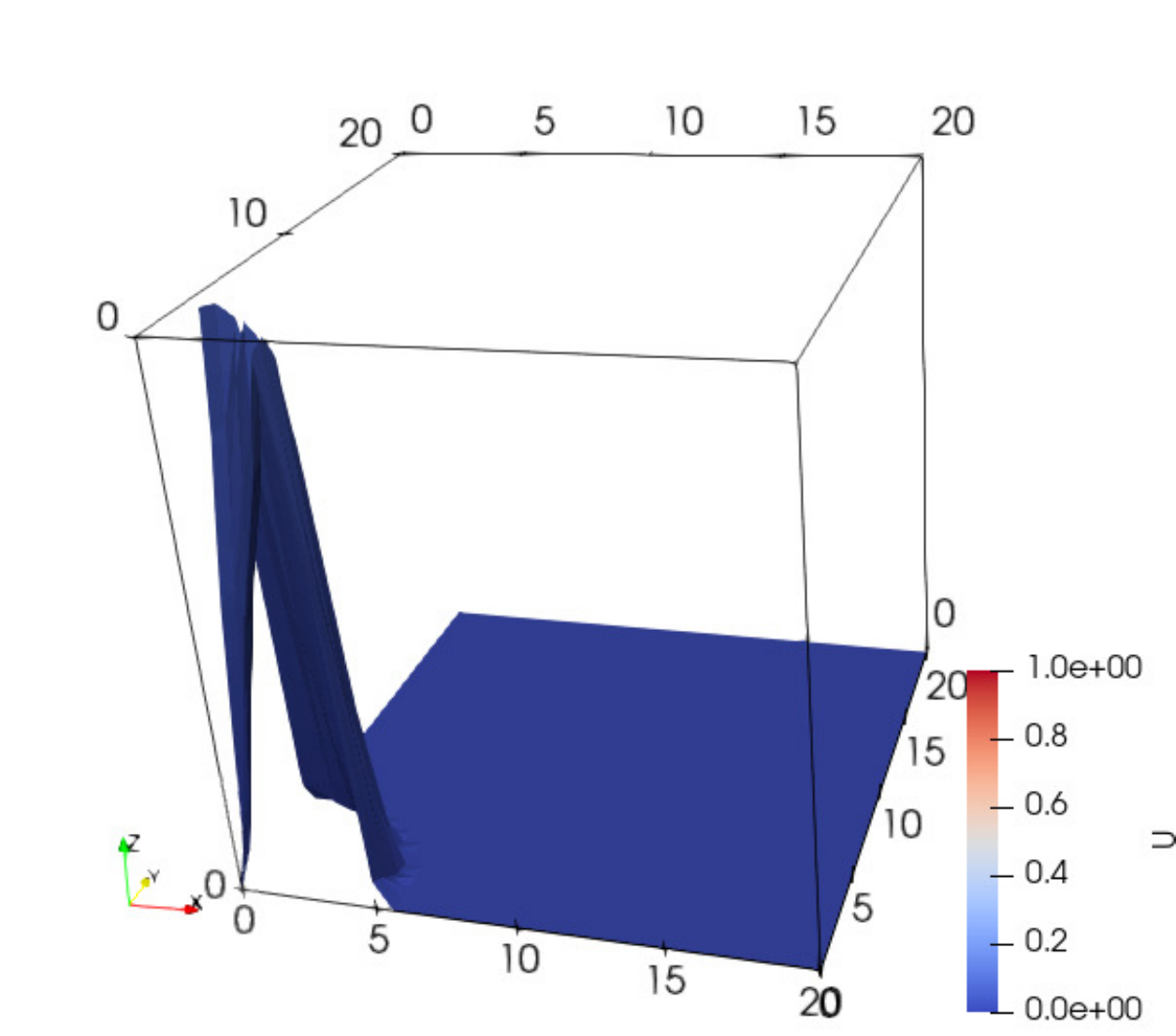}
		\caption{t = 10}
	\end{subfigure}
	\begin{subfigure}{0.24\textwidth}
		\includegraphics[width=\textwidth]{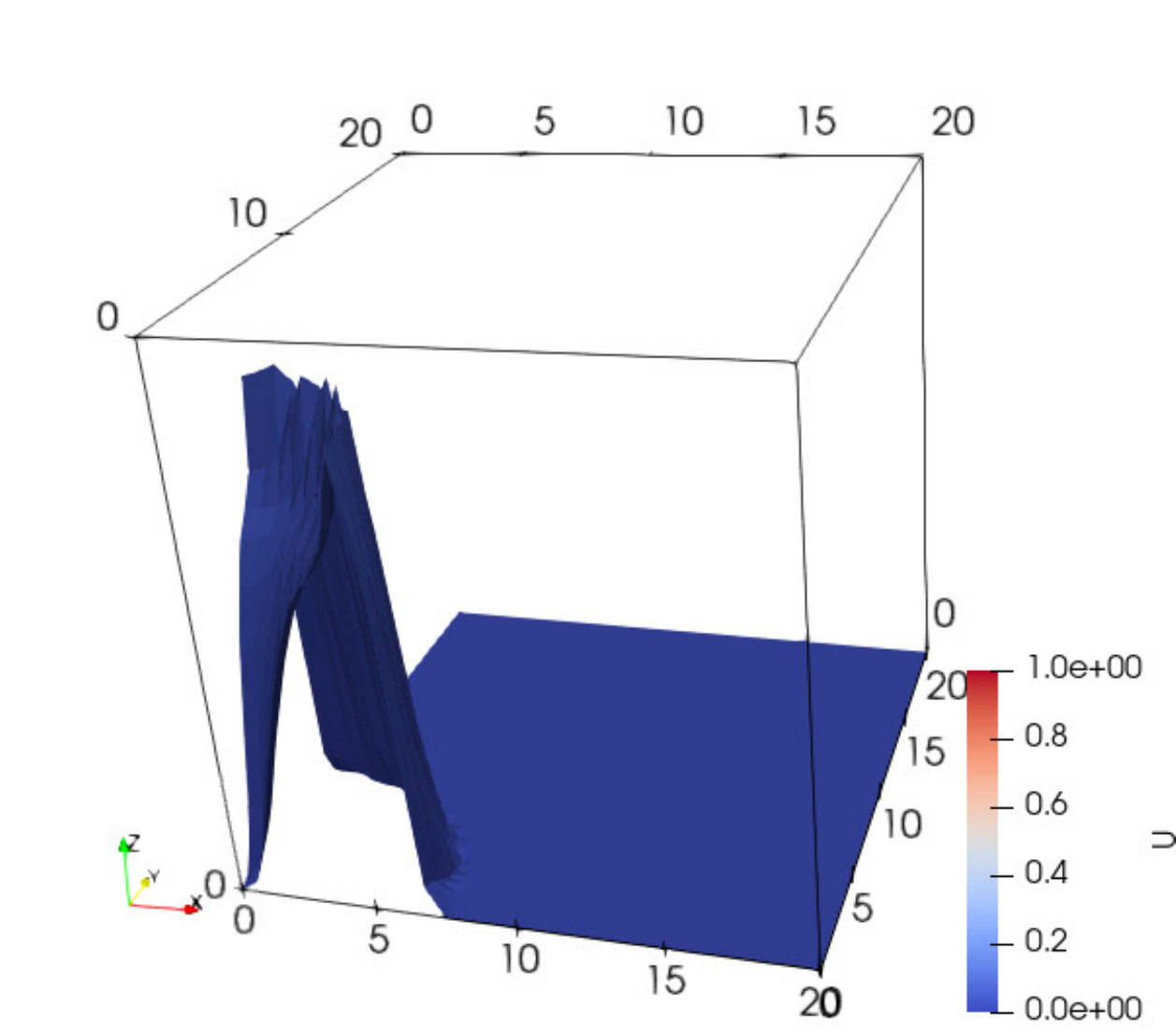}
		\caption{t = 20}
	\end{subfigure}
	\begin{subfigure}{0.24\textwidth}
		\includegraphics[width=\textwidth]{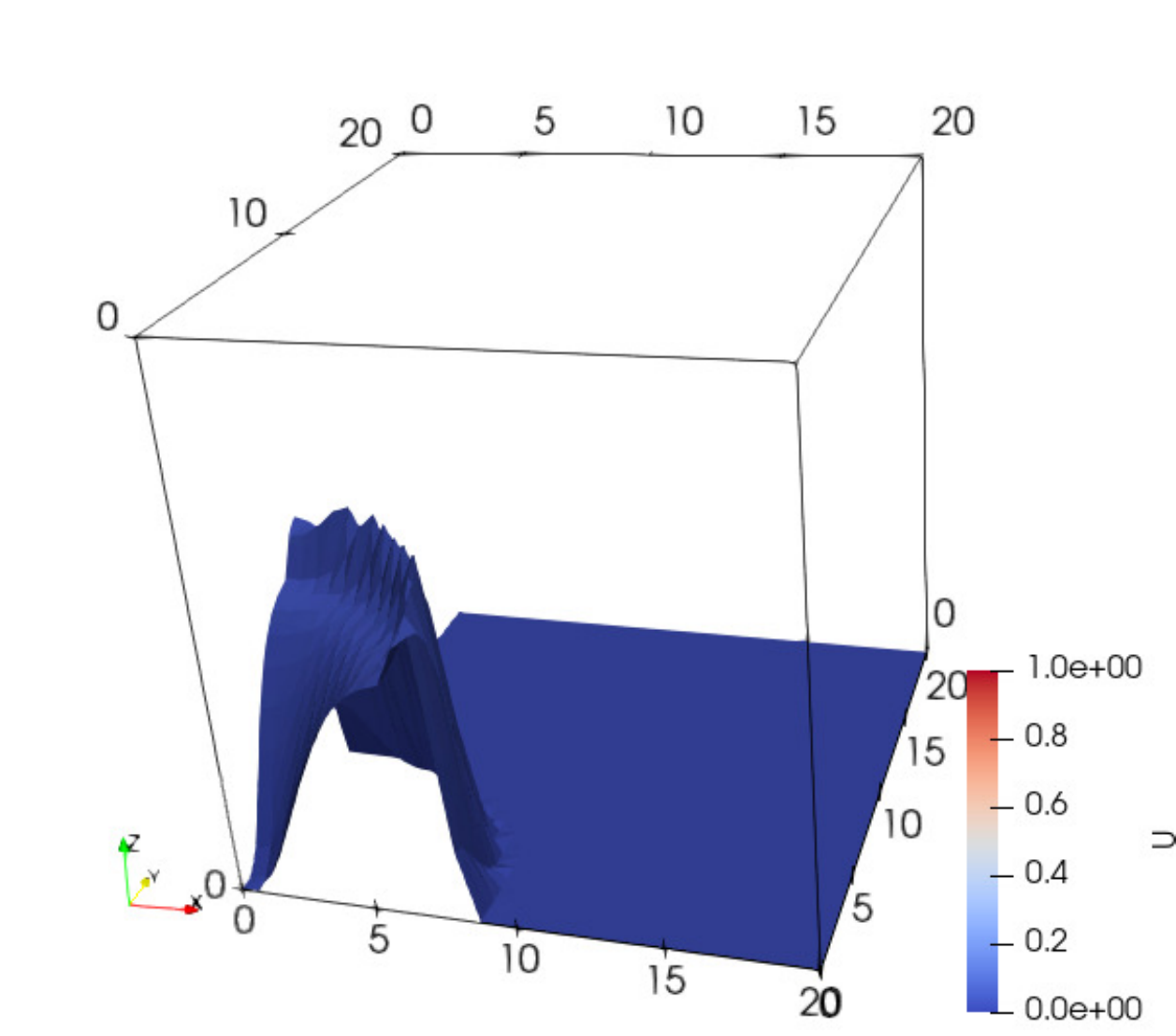}
		\caption{t = 30}
	\end{subfigure}
	\begin{subfigure}{0.24\textwidth}
		\includegraphics[width=\textwidth]{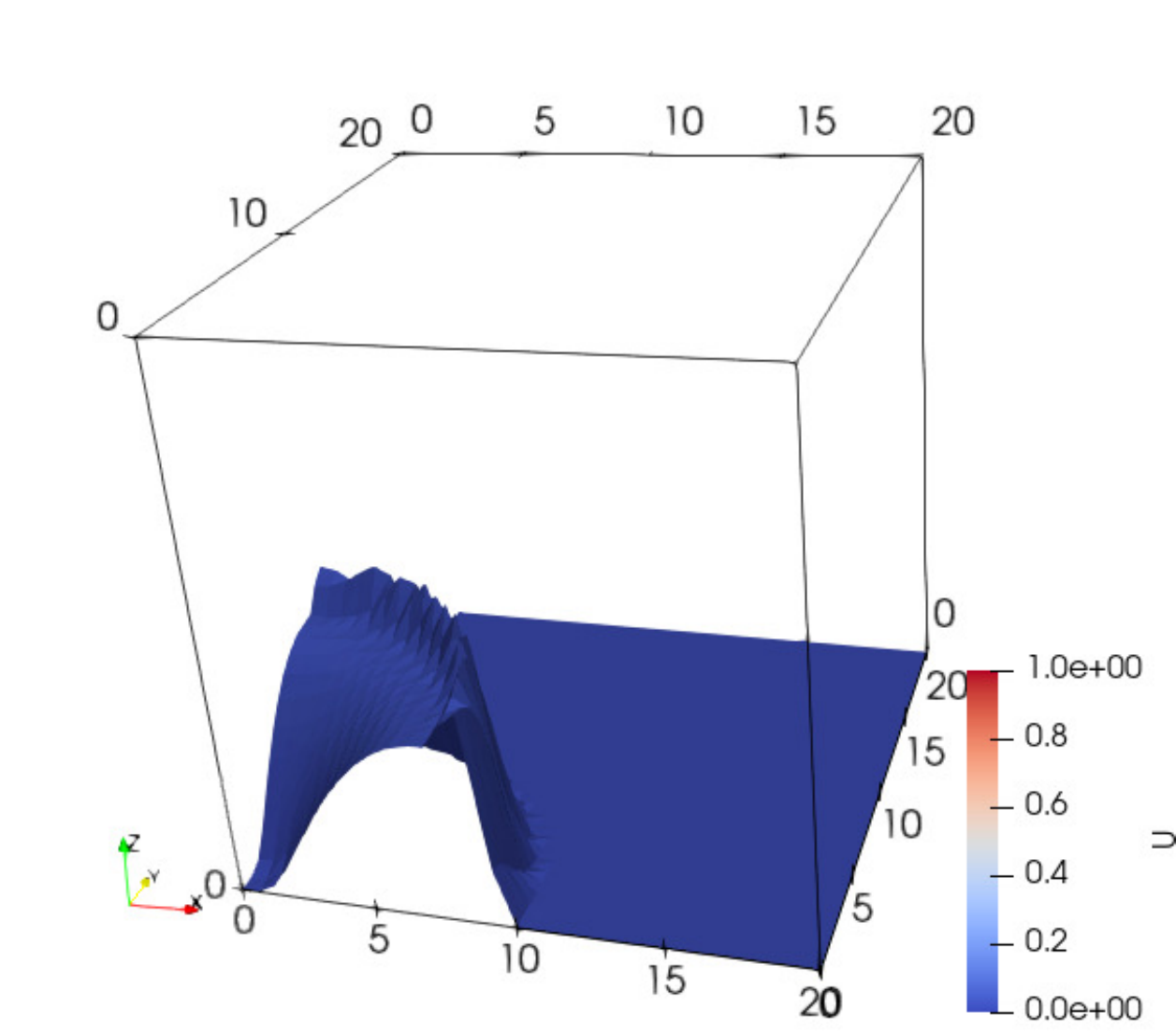}
		\caption{t = 40}
	\end{subfigure}
	\caption{
	The effect of the haptotactic rate on the cancer cell invasion $u$ at
different time instants $t=10, 20, 30, 40$, computed using the FEM-FCT scheme
for $\mu = 0.0001$ and $\chi=1$.
	 }
	\label{fig14}
\end{figure}

 \begin{figure}[H]
	\centering
	\begin{subfigure}{0.24\textwidth}
		\includegraphics[width=\textwidth]{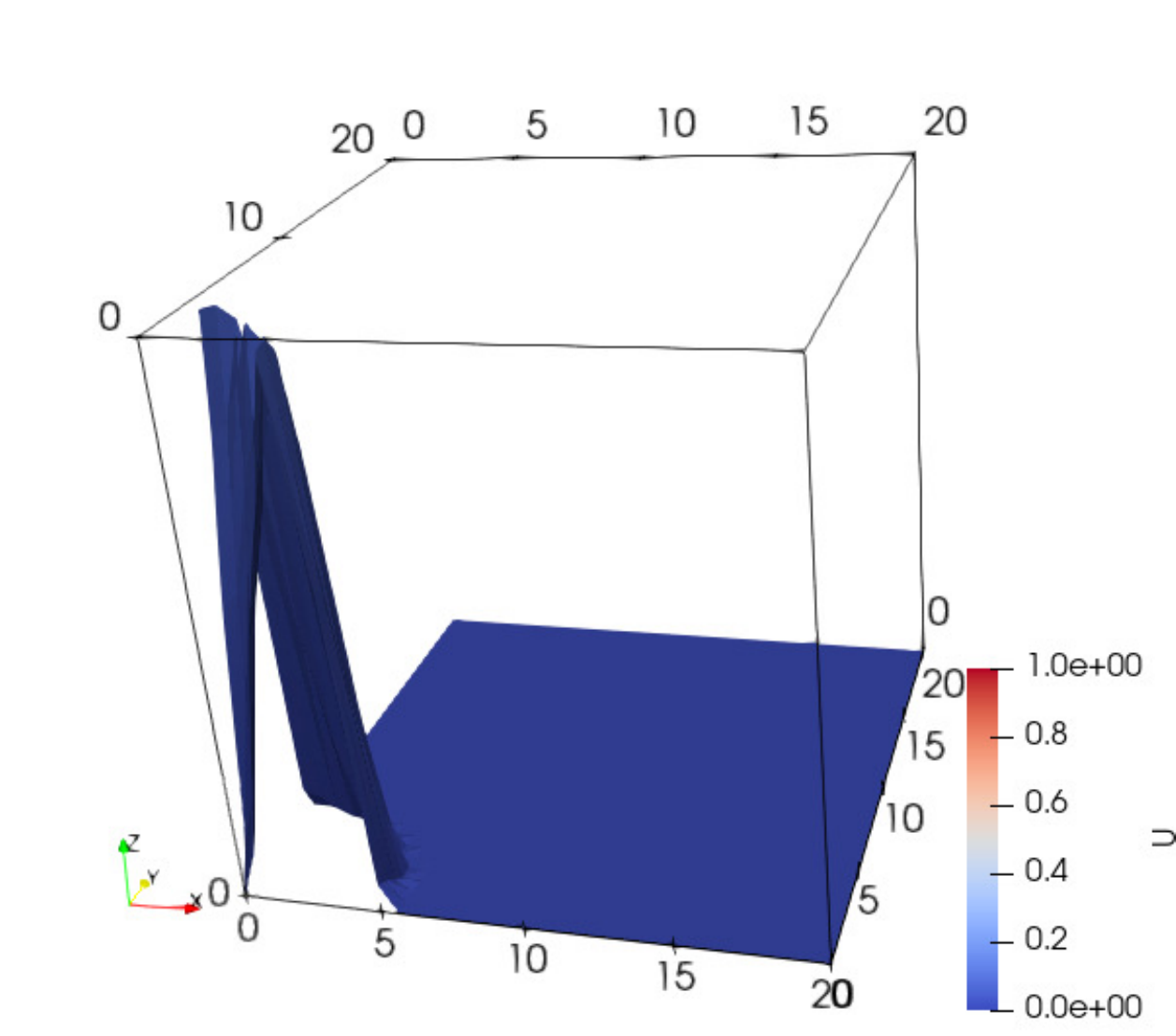}
		\caption{t = 10}
	\end{subfigure}
	\begin{subfigure}{0.24\textwidth}
		\includegraphics[width=\textwidth]{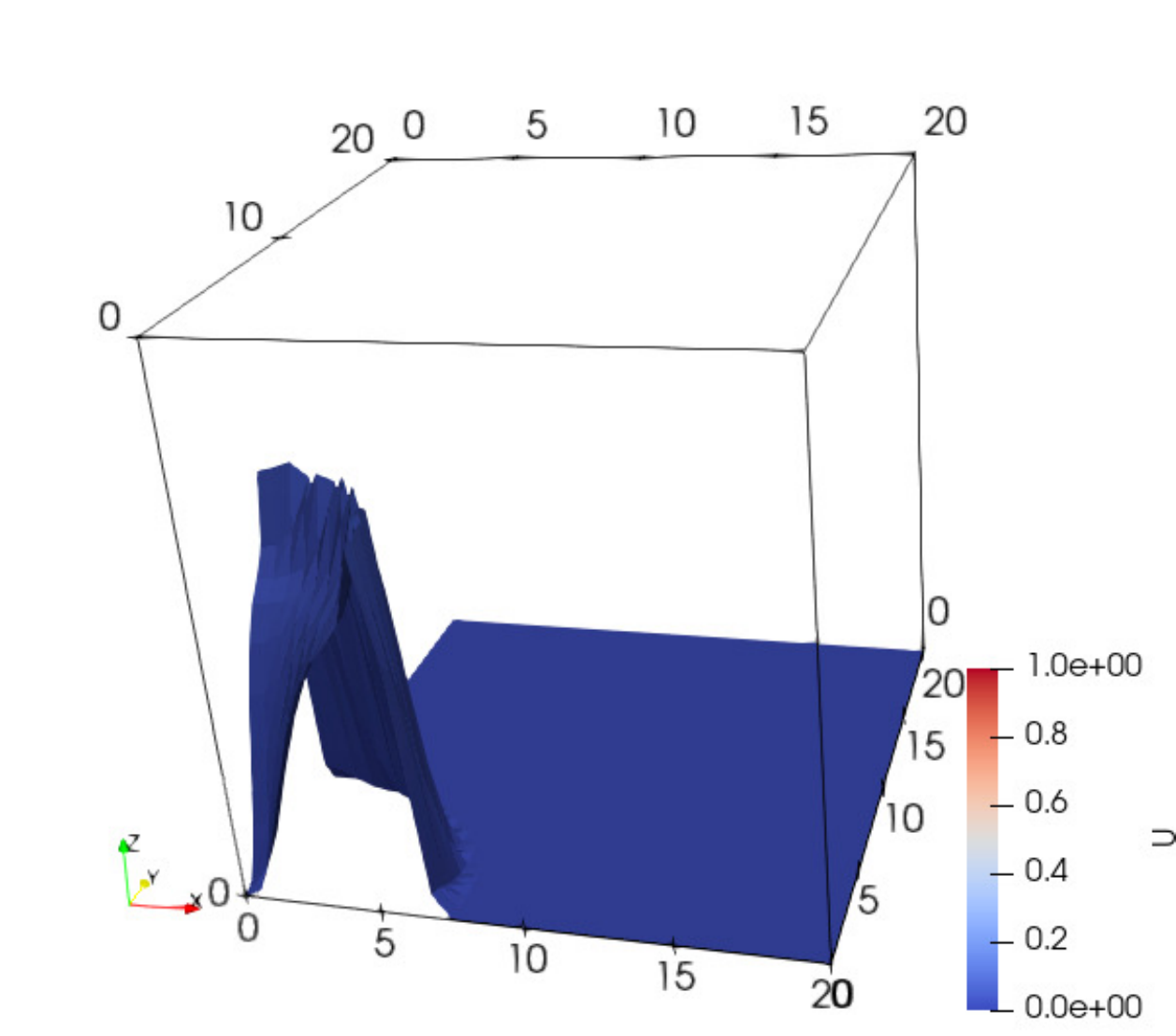}
		\caption{t = 20}
	\end{subfigure}
	\begin{subfigure}{0.24\textwidth}
		\includegraphics[width=\textwidth]{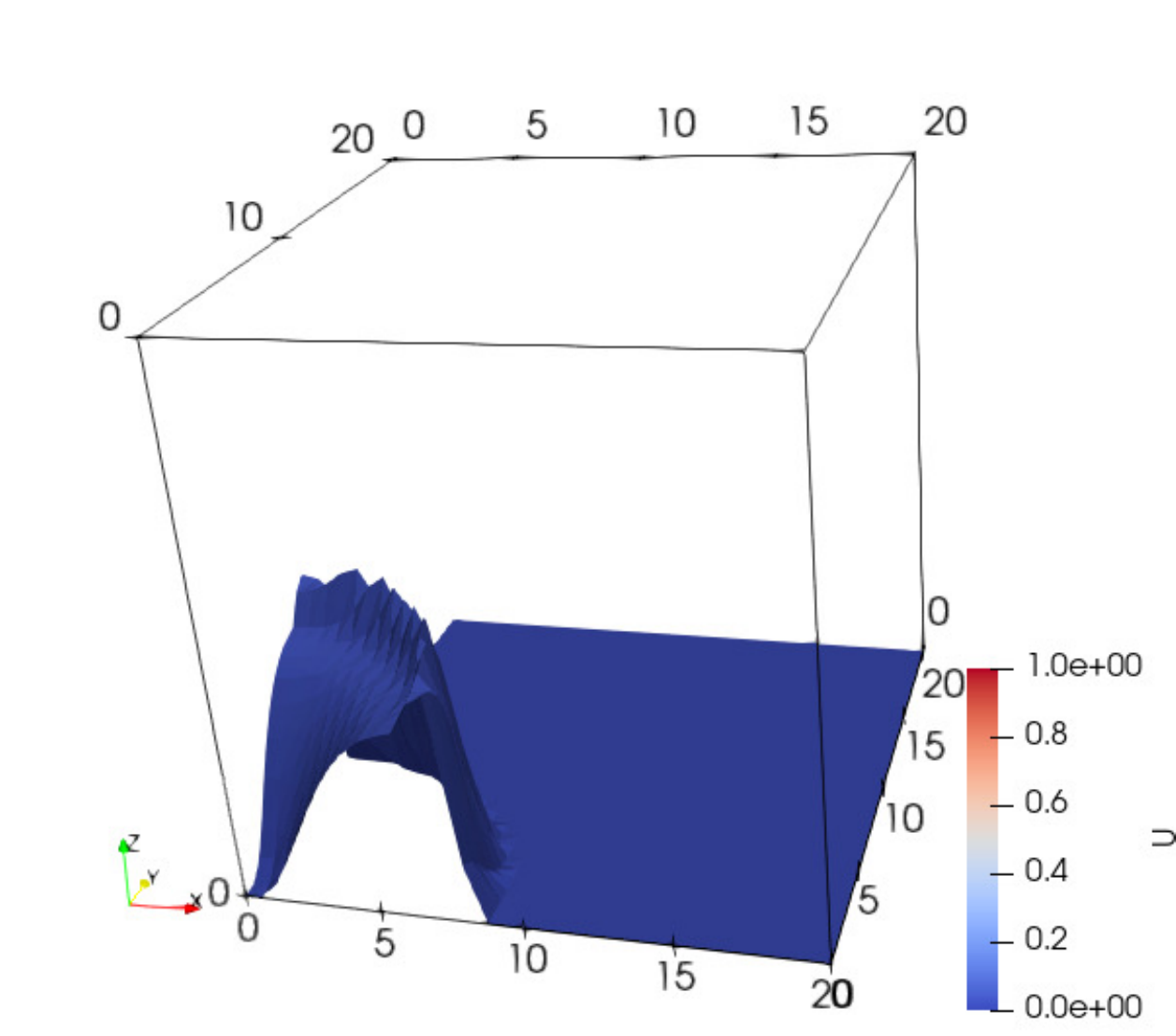}
		\caption{t = 30}
	\end{subfigure}
	\begin{subfigure}{0.24\textwidth}
		\includegraphics[width=\textwidth]{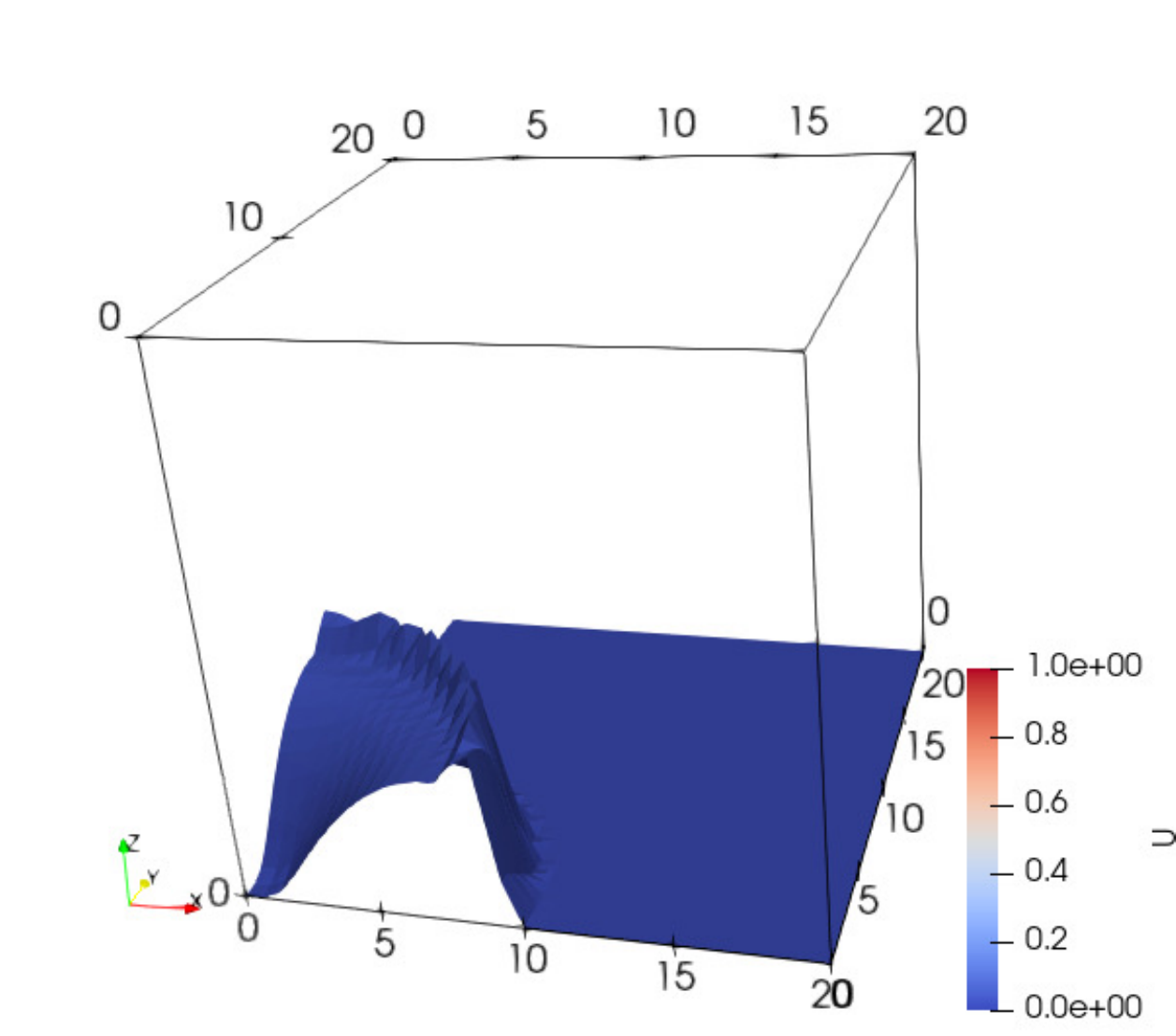}
		\caption{t = 40}
	\end{subfigure}
	\caption{
	The effect of the haptotactic rate on the cancer cell invasion $u$ at
different time instants $t=10, 20, 30, 40$, computed using the low-order
method for $\mu = 0.0001$ and $\chi=1$.
	 }
	\label{fig13}
\end{figure}

 \begin{figure}[H]
	\centering
	\begin{subfigure}{0.24\textwidth}
		\includegraphics[width=\textwidth]{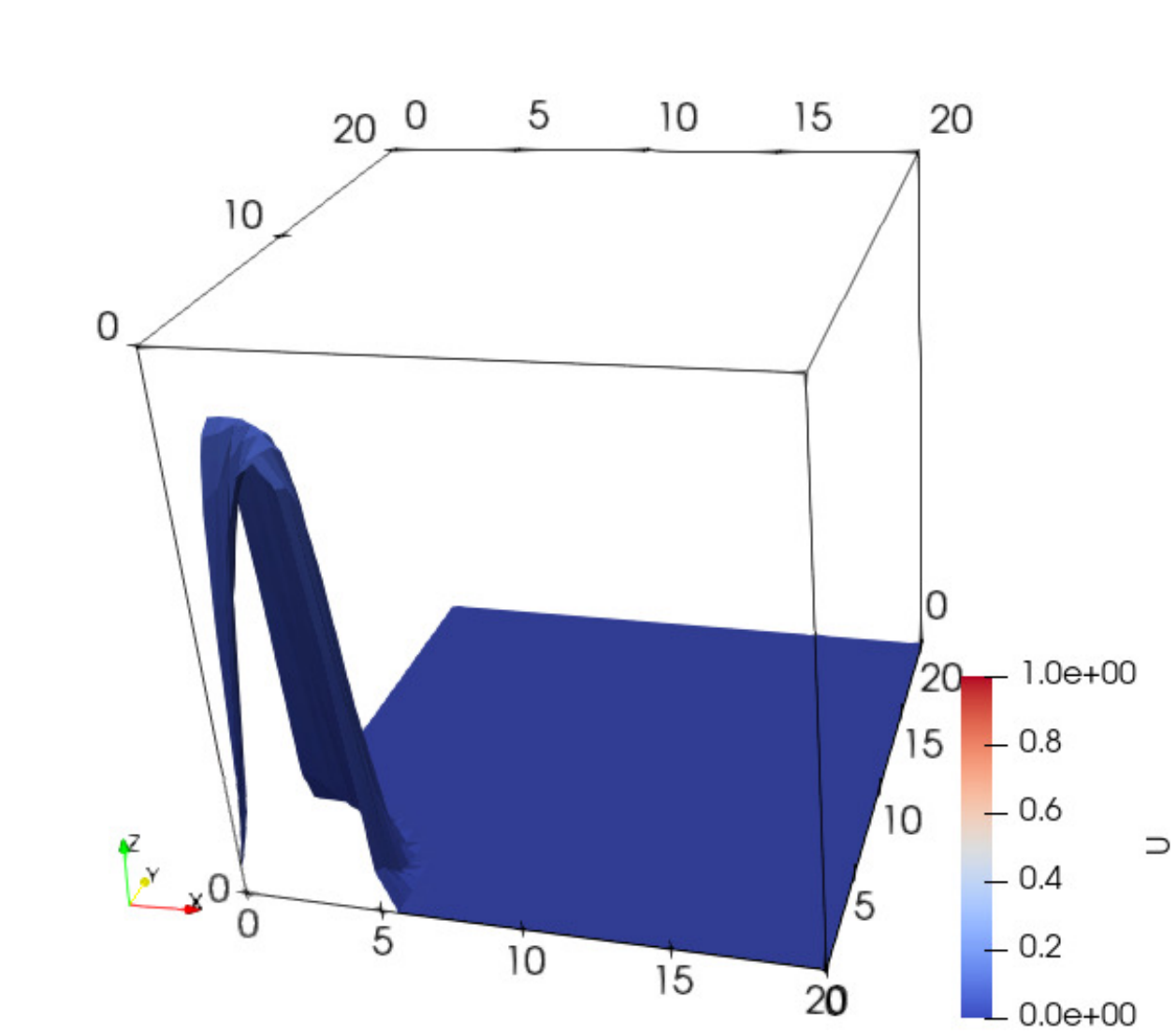}
		\caption{t = 10}
	\end{subfigure}
	\begin{subfigure}{0.24\textwidth}
		\includegraphics[width=\textwidth]{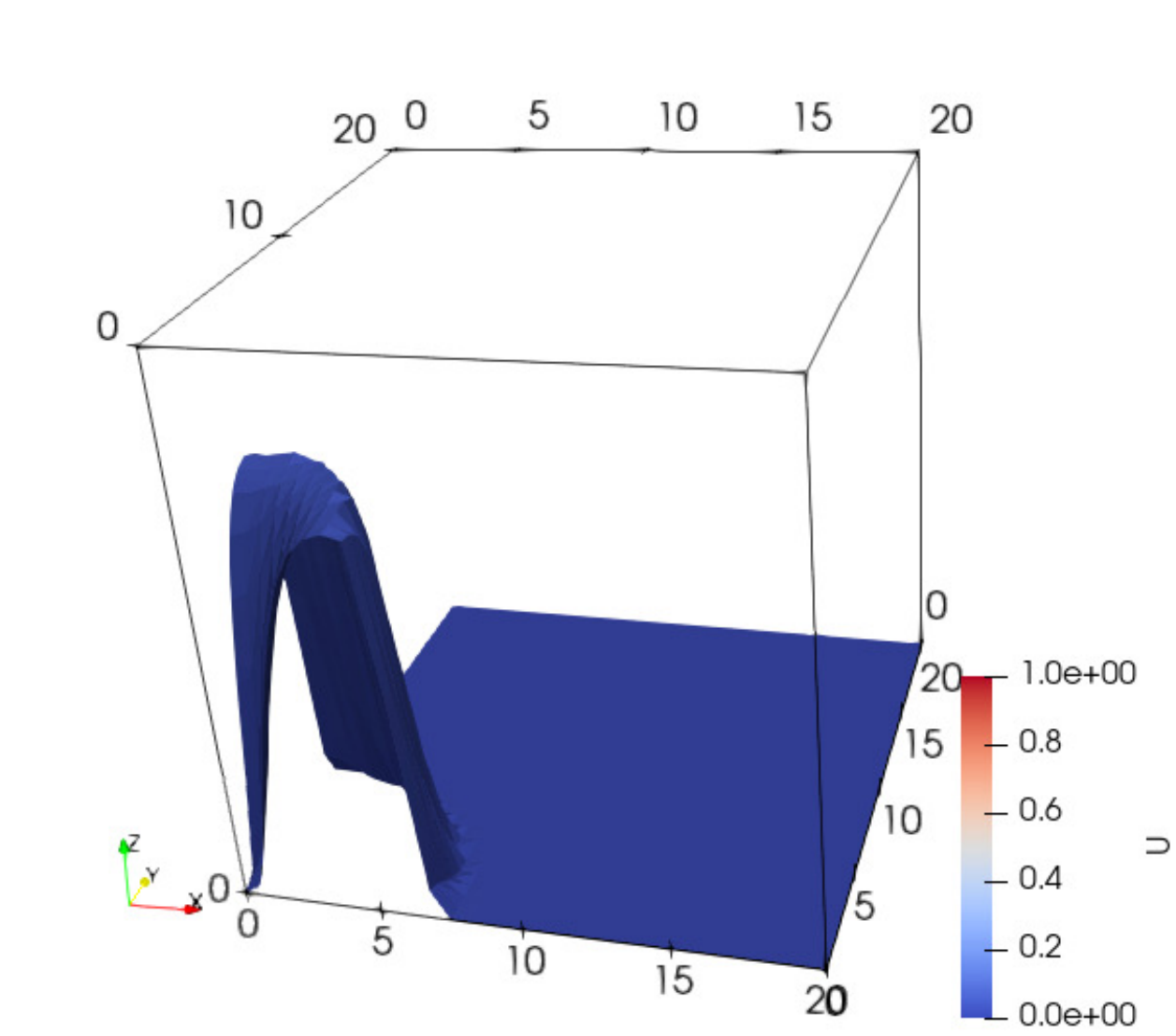}
		\caption{t = 20}
	\end{subfigure}
	\begin{subfigure}{0.24\textwidth}
		\includegraphics[width=\textwidth]{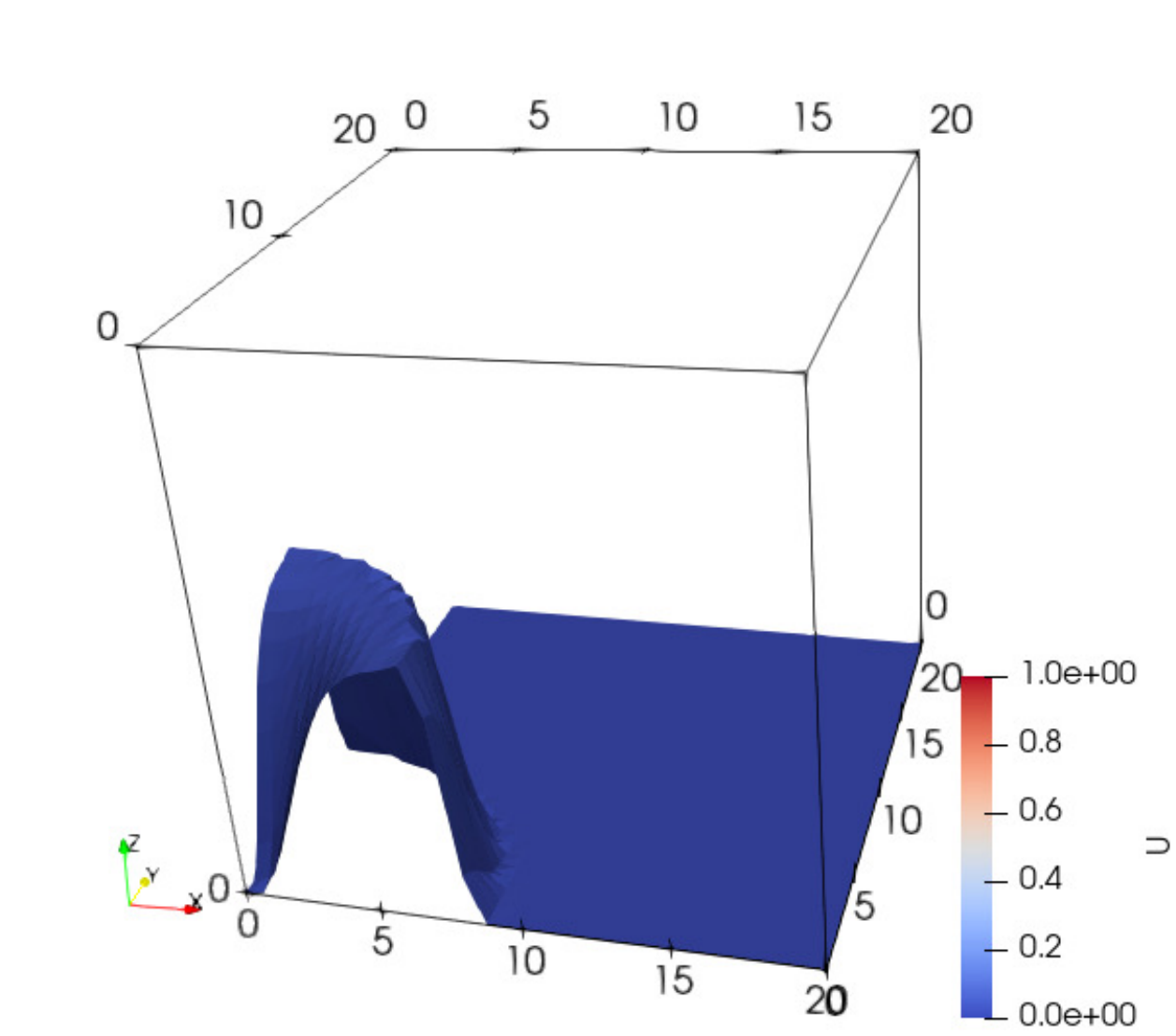}
		\caption{t = 30}
	\end{subfigure}
	\begin{subfigure}{0.24\textwidth}
		\includegraphics[width=\textwidth]{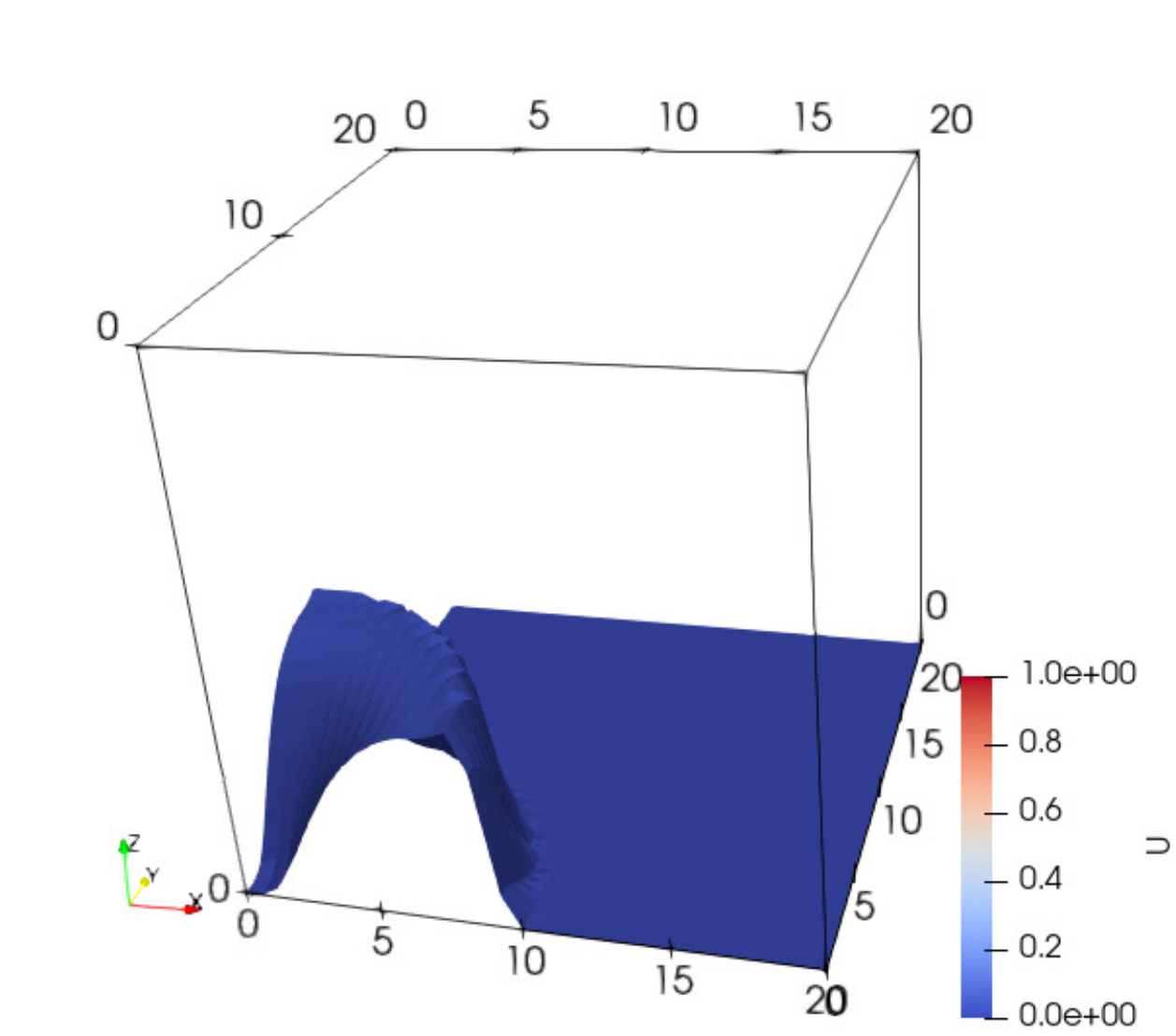}
		\caption{t = 40}
	\end{subfigure}
	\caption{
	The effect of the haptotactic rate on the cancer cell invasion $u$ at
different time instants $t=10, 20, 30, 40$, computed using the FEM-FCT scheme
with $\theta = 1$ for $\mu = 0.0001$ and $\chi=1$.
	 }
	\label{fig16}
\end{figure} 

\section{Conclusions}
\label{sec:conclusions}

In this paper, we proposed a fully discrete nonlinear high-resolution 
positivity preserving FEM-FCT scheme for chemotaxis equations without 
self-diffusion term describing a model of cancer invasion. We proved the 
solvability and positivity preservation of both the nonlinear discrete problem 
and the linear problems appearing in fixed-point iterations. A series of 
numerical experiments are shown to verify the robustness of the proposed 
method. Derivation of error estimates is left to future work.

\section*{Acknowledgments}
This work was initiated during a research stay of the first author at the
Institute of Applied Mathematics at the Leibniz University Hanover from
November 2021 to April 2022 for which hospitality is still gratefully
acknowledged. The work of Shahin Heydari was further supported through the
grant No.~396921 of the Charles University Grant Agency. The work of Petr
Knobloch was supported through the grant No.~22-01591S of the Czech Science
Foundation.

\bibliographystyle{plain}
\bibliography{cancer.bib}

\begin{thebibliography}{10}

\bibitem{aida2006lower}
Masashi Aida, Tohru Tsujikawa, Messoud Efendiev, Atsushi Yagi, and Masayasu
  Mimura.
\newblock Lower estimate of the attractor dimension for a chemotaxis growth
  system.
\newblock {\em J. London Math. Soc. (2)}, 74(2):453--474, 2006.

\bibitem{aida2004target}
Masashi Aida and Atsushi Yagi.
\newblock Target pattern solutions for chemotaxis-growth system.
\newblock {\em Sci. Math. Jpn.}, 59(3):577--590, 2004.

\bibitem{anderson2000mathematical}
Alexander~R.A. Anderson, Mark~A.J. Chaplain, E.~Luke Newman, Robert~J.C.
  Steele, and Alastair~M. Thompson.
\newblock Mathematical modelling of tumour invasion and metastasis.
\newblock {\em Computational and mathematical methods in medicine},
  2(2):129--154, 2000.

\bibitem{deal2020}
Daniel Arndt, Wolfgang Bangerth, Denis Davydov, Timo Heister, Luca Heltai,
  Martin Kronbichler, Matthias Maier, Jean-Paul Pelteret, Bruno Turcksin, and
  David Wells.
\newblock The {DEAL.II} finite element library: {D}esign, features, and
  insights.
\newblock {\em Comput. Math. Appl.}, 81:407--422, 2021.

\bibitem{dealII94}
Daniel Arndt, Wolfgang Bangerth, Marco Feder, Marc Fehling, Rene
  Gassm{\"o}ller, Timo Heister, Luca Heltai, Martin Kronbichler, Matthias
  Maier, Peter Munch, Jean-Paul Pelteret, Simon Sticko, Bruno Turcksin, and
  David Wells.
\newblock The \texttt{deal.II} library, {V}ersion 9.4.
\newblock {\em J. Numer. Math.}, 30(3):231--246, 2022.

\bibitem{BJK23}
Gabriel~R. Barrenechea, Volker John, and Petr Knobloch.
\newblock Finite element methods respecting the discrete maximum principle for
  convection-diffusion equations.
\newblock {\em SIAM Rev., accepted for publication}, 2023.

\bibitem{book1975flux}
D.L. Book, J.P. Boris, and K.~Hain.
\newblock Flux-corrected transport {II}: Generalizations of the method.
\newblock {\em J. Comput. Phys.}, 18(3):248--283, 1975.

\bibitem{BB73}
Jay~P. Boris and David~L. Book.
\newblock Flux-corrected transport. {I. SHASTA}, a fluid transport algorithm
  that works.
\newblock {\em J. Comput. Phys.}, 11(1):38--69, 1973.

\bibitem{boris1976flux}
J.P. Boris and D.L. Book.
\newblock Flux-corrected transport. {III.} minimal-error {FCT} algorithms.
\newblock {\em J. Comput. Phys.}, 20(4):397--431, 1976.

\bibitem{calvez2012blow}
Vincent Calvez, Lucilla Corrias, and Mohamed~Abderrahman Ebde.
\newblock Blow-up, concentration phenomenon and global existence for the
  {K}eller--{S}egel model in high dimension.
\newblock {\em Comm. Partial Differential Equations}, 37(4):561--584, 2012.

\bibitem{chaplain2005mathematical}
M.~A.~J. Chaplain and G.~Lolas.
\newblock Mathematical modelling of cancer cell invasion of tissue: the role of
  the urokinase plasminogen activation system.
\newblock {\em Math. Models Methods Appl. Sci.}, 15(11):1685--1734, 2005.

\bibitem{chaplain2006mathematical}
M.~A.~J. Chaplain and G.~Lolas.
\newblock Mathematical modelling of cancer invasion of tissue: dynamic
  heterogeneity.
\newblock {\em Netw. Heterog. Media}, 1(3):399--439, 2006.

\bibitem{chaplain1993model}
Mark~A.J. Chaplain and Andrew~M. Stuart.
\newblock A model mechanism for the chemotactic response of endothelial cells
  to tumour angiogenesis factor.
\newblock {\em Mathematical Medicine and Biology: A Journal of the IMA},
  10(3):149--168, 1993.

\bibitem{chapwanya2014positivity}
Michael Chapwanya, Jean M.-S. Lubuma, and Ronald~E. Mickens.
\newblock Positivity-preserving nonstandard finite difference schemes for
  cross-diffusion equations in biosciences.
\newblock {\em Comput. Math. Appl.}, 68(9):1071--1082, 2014.

\bibitem{Ciarlet}
P.~G. Ciarlet.
\newblock {\em The finite element method for elliptic problems}.
\newblock North-Holland, Amsterdam, 1978.

\bibitem{corrias2004global}
L.~Corrias, B.~Perthame, and H.~Zaag.
\newblock Global solutions of some chemotaxis and angiogenesis systems in high
  space dimensions.
\newblock {\em Milan J. Math.}, 72:1--28, 2004.

\bibitem{Dav04}
Timothy~A. Davis.
\newblock Algorithm 832: {UMFPACK} {V}4.3---an unsymmetric-pattern multifrontal
  method.
\newblock {\em ACM Trans. Math. Software}, 30(2):196--199, 2004.

\bibitem{epshteyn2009new}
Yekaterina Epshteyn and Alexander Kurganov.
\newblock New interior penalty discontinuous {G}alerkin methods for the
  {K}eller--{S}egel chemotaxis model.
\newblock {\em SIAM J. Numer. Anal.}, 47(1):386--408, 2008/09.

\bibitem{FeNeuNaWi19}
Dianlei Feng, Insa Neuweiler, Udo Nackenhorst, and Thomas Wick.
\newblock A time-space flux-corrected transport finite element formulation for
  solving multi-dimensional advection-diffusion-reaction equations.
\newblock {\em J. Comput. Phys.}, 396:31--53, 2019.

\bibitem{filbet2006finite}
Francis Filbet.
\newblock A finite volume scheme for the {P}atlak--{K}eller--{S}egel chemotaxis
  model.
\newblock {\em Numer. Math.}, 104(4):457--488, 2006.

\bibitem{fuest2022global}
Mario Fuest, Shahin Heydari, Petr Knobloch, Johannes Lankeit, and Thomas Wick.
\newblock Global existence of classical solutions and numerical simulations of
  a cancer invasion model.
\newblock {\em ESAIM Math.~Model.~Numer.~Anal.}, 57(4):1893--1919, 2023.

\bibitem{horstmann2011uniqueness}
Dirk Horstmann and Marcello Lucia.
\newblock Uniqueness and symmetry of equilibria in a chemotaxis model.
\newblock {\em J. Reine Angew. Math.}, 654:83--124, 2011.

\bibitem{horstmann2005boundedness}
Dirk Horstmann and Michael Winkler.
\newblock Boundedness vs. blow-up in a chemotaxis system.
\newblock {\em J. Differential Equations}, 215(1):52--107, 2005.

\bibitem{huang2021fully}
Xueling Huang, Xinlong Feng, Xufeng Xiao, and Kun Wang.
\newblock Fully decoupled, linear and positivity-preserving scheme for the
  chemotaxis--{S}tokes equations.
\newblock {\em Comput. Methods Appl. Mech. Engrg.}, 383:Paper No. 113909, 19,
  2021.

\bibitem{huang2020efficient}
Xueling Huang, Xufeng Xiao, Jianping Zhao, and Xinlong Feng.
\newblock An efficient operator-splitting {FEM-FCT} algorithm for {3D}
  chemotaxis models.
\newblock {\em Engineering with Computers}, 36(4):1393--1404, 2020.

\bibitem{john2021existence}
Volker John and Petr Knobloch.
\newblock Existence of solutions of a finite element flux-corrected-transport
  scheme.
\newblock {\em Appl. Math. Lett.}, 115:Paper No. 106932, 6, 2021.

\bibitem{john2021solvability}
Volker John, Petr Knobloch, and Paul Korsmeier.
\newblock On the solvability of the nonlinear problems in an algebraically
  stabilized finite element method for evolutionary transport-dominated
  equations.
\newblock {\em Math. Comp.}, 90(328):595--611, 2021.

\bibitem{keller1970initiation}
Evelyn~F. Keller and Lee~A. Segel.
\newblock Initiation of slime mold aggregation viewed as an instability.
\newblock {\em Journal of theoretical biology}, 26(3):399--415, 1970.

\bibitem{keller1971model}
Evelyn~F. Keller and Lee~A. Segel.
\newblock Model for chemotaxis.
\newblock {\em Journal of theoretical biology}, 30(2):225--234, 1971.

\bibitem{khalsaraei2016positivity}
M.~Mehdizadeh Khalsaraei, Sh. Heydari, and L.~Davari Algoo.
\newblock Positivity preserving nonstandard finite difference schemes applied
  to cancer growth model.
\newblock {\em J. Cancer Treat. Res.}, 4(4):27--33, 2016.

\bibitem{kolev2022unconditional}
Mikhail~K. Kolev, Miglena~N. Koleva, and Lubin~G. Vulkov.
\newblock An unconditional positivity-preserving difference scheme for models
  of cancer migration and invasion.
\newblock {\em Mathematics}, 10(1):131, 2022.

\bibitem{KT02}
D.~Kuzmin and S.~Turek.
\newblock Flux correction tools for finite elements.
\newblock {\em J. Comput. Phys.}, 175(2):525--558, 2002.

\bibitem{Kuz09}
Dmitri Kuzmin.
\newblock Explicit and implicit {FEM}-{FCT} algorithms with flux linearization.
\newblock {\em J. Comput. Phys.}, 228(7):2517--2534, 2009.

\bibitem{Kuz12a}
Dmitri Kuzmin.
\newblock Algebraic flux correction {I}. {S}calar conservation laws.
\newblock In Dmitri Kuzmin, Rainald L\"ohner, and Stefan Turek, editors, {\em
  Flux-corrected transport. Principles, algorithms, and applications}, pages
  145--192. Springer, Dordrecht, second edition, 2012.

\bibitem{li2017local}
Xingjie~Helen Li, Chi-Wang Shu, and Yang Yang.
\newblock Local discontinuous {G}alerkin method for the {K}eller--{S}egel
  chemotaxis model.
\newblock {\em J. Sci. Comput.}, 73(2-3):943--967, 2017.

\bibitem{lohner1987finite}
Rainald L{\"o}hner, Ken Morgan, Jaime Peraire, and Mehdi Vahdati.
\newblock Finite element flux-corrected transport ({FEM--FCT}) for the {E}uler
  and {N}avier--{S}tokes equations.
\newblock {\em Int.~J.~Numer.~Methods Fluids}, 7(10):1093--1109, 1987.

\bibitem{marchant_norbury_perumpanani}
B.~P. Marchant, J.~Norbury, and A.~J. Perumpanani.
\newblock Travelling shock waves arising in a model of malignant invasion.
\newblock {\em SIAM J. Appl. Math.}, 60(2):463--476, 2000.

\bibitem{marchant_norbury_sherratt}
B.~P. Marchant, J.~Norbury, and J.~A. Sherratt.
\newblock Travelling wave solutions to a haptotaxis-dominated model of
  malignant invasion.
\newblock {\em Nonlinearity}, 14(6):1653--1671, 2001.

\bibitem{mimura1996aggregating}
Masayasu Mimura and Tohru Tsujikawa.
\newblock Aggregating pattern dynamics in a chemotaxis model including growth.
\newblock {\em Physica A: Statistical Mechanics and its Applications},
  230(3-4):499--543, 1996.

\bibitem{nanjundiah1973chemotaxis}
Vidyanand Nanjundiah.
\newblock Chemotaxis, signal relaying and aggregation morphology.
\newblock {\em Journal of Theoretical Biology}, 42(1):63--105, 1973.

\bibitem{perumpanani1999two}
Abbey~J. Perumpanani, Jonathan~A. Sherratt, John Norbury, and Helen~M. Byrne.
\newblock A two parameter family of travelling waves with a singular barrier
  arising from the modelling of extracellular matrix mediated cellular
  invasion.
\newblock {\em Phys.~D}, 126(3-4):145--159, 1999.

\bibitem{ropp2009stability}
David~L. Ropp and John~N. Shadid.
\newblock Stability of operator splitting methods for systems with indefinite
  operators: advection-diffusion-reaction systems.
\newblock {\em J. Comput. Phys.}, 228(9):3508--3516, 2009.

\bibitem{saito2007conservative}
Norikazu Saito.
\newblock Conservative upwind finite-element method for a simplified
  {K}eller--{S}egel system modelling chemotaxis.
\newblock {\em IMA J. Numer. Anal.}, 27(2):332--365, 2007.

\bibitem{sokolov2015afc}
Andriy Sokolov, Ramzan Ali, and Stefan Turek.
\newblock An {AFC}-stabilized implicit finite element method for partial
  differential equations on evolving-in-time surfaces.
\newblock {\em J. Comput. Appl. Math.}, 289:101--115, 2015.

\bibitem{sokolov2013numerical}
Andriy Sokolov, Robert Strehl, and Stefan Turek.
\newblock Numerical simulation of chemotaxis models on stationary surfaces.
\newblock {\em Discrete Contin. Dyn. Syst. Ser. B}, 18(10):2689--2704, 2013.

\bibitem{strehl2010flux}
R.~Strehl, A.~Sokolov, D.~Kuzmin, and S.~Turek.
\newblock A flux-corrected finite element method for chemotaxis problems.
\newblock {\em Comput. Methods Appl. Math.}, 10(2):219--232, 2010.

\bibitem{strehl2013positivity}
Robert Strehl, Andriy Sokolov, Dmitri Kuzmin, Dirk Horstmann, and Stefan Turek.
\newblock A positivity-preserving finite element method for chemotaxis problems
  in 3{D}.
\newblock {\em J. Comput. Appl. Math.}, 239:290--303, 2013.

\bibitem{sulman2019positivity}
M.~Sulman and T.~Nguyen.
\newblock A positivity preserving moving mesh finite element method for the
  {K}eller--{S}egel chemotaxis model.
\newblock {\em J. Sci. Comput.}, 80(1):649--666, 2019.

\bibitem{tao2009combined}
Youshan Tao and Mingjun Wang.
\newblock A combined chemotaxis-haptotaxis system: the role of logistic source.
\newblock {\em SIAM J. Math. Anal.}, 41(4):1533--1558, 2009.

\bibitem{Temam77}
Roger Temam.
\newblock {\em {N}avier-{S}tokes equations. {T}heory and numerical analysis}.
\newblock North-Holland, Amsterdam, 1977.

\bibitem{tyson1999minimal}
R.~Tyson, S.R. Lubkin, and James~D. Murray.
\newblock A minimal mechanism for bacterial pattern formation.
\newblock {\em Proceedings of the Royal Society of London. Series B: Biological
  Sciences}, 266(1416):299--304, 1999.

\bibitem{tyson2000fractional}
Rebecca Tyson, L.~G. Stern, and Randall~J. LeVeque.
\newblock Fractional step methods applied to a chemotaxis model.
\newblock {\em J. Math. Biol.}, 41(5):455--475, 2000.

\bibitem{Var00}
Richard~S. Varga.
\newblock {\em Matrix iterative analysis}.
\newblock Springer-Verlag, Berlin, 2000.

\bibitem{wu2005signaling}
Dianqing Wu.
\newblock Signaling mechanisms for regulation of chemotaxis.
\newblock {\em Cell research}, 15(1):52--56, 2005.

\bibitem{Zal79}
Steven~T. Zalesak.
\newblock Fully multidimensional flux-corrected transport algorithms for
  fluids.
\newblock {\em J. Comput. Phys.}, 31(3):335--362, 1979.

\bibitem{zhang2016characteristic}
Jiansong Zhang, Jiang Zhu, and Rongpei Zhang.
\newblock Characteristic splitting mixed finite element analysis of
  {K}eller--{S}egel chemotaxis models.
\newblock {\em Appl. Math. Comput.}, 278:33--44, 2016.

\bibitem{zhao2020petrov}
Shubo Zhao, Xufeng Xiao, Jianping Zhao, and Xinlong Feng.
\newblock A {P}etrov--{G}alerkin finite element method for simulating
  chemotaxis models on stationary surfaces.
\newblock {\em Comput. Math. Appl.}, 79(11):3189--3205, 2020.

\end{thebibliography}
\end{document}